\documentclass[12pt,leqno]{amsart}
\usepackage[latin9]{inputenc}
\usepackage{verbatim}
\usepackage{amsthm}
\usepackage{amstext}
\usepackage{amssymb}
\usepackage[dvipdfmx]{graphicx}
\usepackage{xcolor}
\usepackage{soul}
\makeatletter
\numberwithin{equation}{section}
\numberwithin{figure}{section}


\setlength{\textheight}{23cm}
\setlength{\textwidth}{16cm}
\setlength{\oddsidemargin}{0cm}
\setlength{\evensidemargin}{0cm}
\setlength{\topmargin}{0cm}
\usepackage{amsthm}\usepackage{amscd}\usepackage{bm}

\usepackage[dvipdfmx,pdfstartview=FitH,pdfborderstyle={/S/B/W 1}]{hyperref}

%

%
%
\theoremstyle{plain}
\newtheorem{main theorem}{Main Theorem}
\newtheorem{theorem}{Theorem}[section]
\newtheorem{lemma}[theorem]{Lemma}

\newtheorem{corollary}[theorem]{Corollary}
\newtheorem{proposition}[theorem]{Proposition}
\newtheorem{claim}[theorem]{Claim}

\newtheorem{subclaim}[theorem]{Subclaim}
\theoremstyle{definition}
\newtheorem{definition}[theorem]{Definition}
\newtheorem{remark}[theorem]{Remark}
\newtheorem{example}[theorem]{Example}

\newtheorem{problem}[theorem]{Problem}

%
%
%


\newcommand{\mdim}{\mathrm{mdim}}
\newcommand{\diam}{\mathrm{diam}}
\newcommand{\widim}{\mathrm{Widim}}


\makeatother

\begin{document}

\title[Expansive multiparameter actions and mean dimension]{Expansive multiparameter actions and mean dimension}

\author{Tom Meyerovitch, Masaki Tsukamoto}

\subjclass[2010]{37B05, 54F45}

\keywords{expansive action, mean dimension, topological entropy}

\date{\today}

\thanks{}

\maketitle

\begin{abstract}
Ma\~{n}\'{e} (1979) proved that if a compact metric space admits an expansive homeomorphism then
it is finite dimensional.
We generalize this theorem to multiparameter actions.
The generalization involves mean dimension theory, which counts ``averaged dimension''
of a dynamical system.
We prove that if $T:\mathbb{Z}^k\times X\to X$ is expansive and if 
$R:\mathbb{Z}^{k-1}\times X\to X$ commutes with $T$ then
$R$ has finite mean dimension.
When $k=1$, this statement reduces to Ma\~{n}\'{e}'s theorem.
We also study several related issues, especially the connection with entropy theory.
\end{abstract}

\section{Introduction}  \label{section: introduction}

\subsection{Main results} \label{subsection: main results}

Let $(X,d)$ be a compact metric space.
A homeomorphism $T:X\to X$ is said to be \textbf{expansive} if there exists $c>0$ such that any
distinct two points $x$ and $y$ in $X$ satisfy
\[ \sup_{n\in \mathbb{Z}} d(T^n x, T^n y) >c. \]
Hyperbolic dynamics provides many examples of expansive maps \cite[Chapter 3]{Bowen75}:
\textit{A diffeomorphism is expansive on hyperbolic sets}.
Motivated by the work of Bowen \cite{Bowen70} on hyperbolic minimal sets,
Ma\~{n}\'{e} \cite{Mane} investigated the topological dimension of a compact metric space admitting an expansive homeomorphism:

\begin{theorem}[Ma\~{n}\'{e}, 1979]  \label{theorem: Mane1}
   Let $T:X\to X$ be an expansive homeomorphism. Then $X$ is finite dimensional.
\end{theorem}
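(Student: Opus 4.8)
The plan is to bound the topological dimension through the width-dimension functions $\widim_\epsilon$, which satisfy $\dim X=\lim_{\epsilon\to 0^+}\widim_\epsilon(X,d)$ with the limit monotone; thus it suffices to produce a \emph{uniform} upper bound on $\widim_\epsilon(X,d)$ valid for all small $\epsilon$. The first step is routine: upgrade expansiveness to its uniform form by a compactness argument. Writing $d_N(x,y)=\max_{|i|\le N}d(T^ix,T^iy)$, I would show that for every $\delta>0$ there is $N=N(\delta)$ such that $d_N(x,y)\le c$ forces $d(x,y)\le\delta$; otherwise a sequence of counterexamples would, by compactness of $X$, converge to a pair of distinct points never separated by more than $c$ along the orbit, contradicting expansiveness.

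Given $\epsilon>0$, choose $N=N(\epsilon)$ as above. Then any continuous map $f\colon X\to P$ into a polyhedron whose fibers have $d_N$-diameter below $c$ automatically has $d$-fibers of diameter at most $\epsilon$, since two points in a common fiber satisfy $d_N\le c$ and hence $d\le\epsilon$. This yields the key inequality $\widim_\epsilon(X,d)\le\widim_c(X,d_N)$, and reduces the whole theorem to a single assertion, namely $\sup_N\widim_c(X,d_N)<\infty$.

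This last bound is the crux, and I expect it to be the main obstacle. The danger is visible already from the shift relation $d_{N+1}=\max\{d_N,\,d\circ T^{N+1},\,d\circ T^{-(N+1)}\}$: combined with the subadditivity of $\widim_c$ under maxima of metrics and the invariance $\widim_c(X,d\circ T^{j})=\widim_c(X,d)$, it gives only the \emph{linear} estimate $\widim_c(X,d_N)\le(2N+1)\,\widim_c(X,d)$. This linear bound uses nothing beyond $T$ being a homeomorphism, and it shows merely that $\mdim$ is finite; it does not close the argument. To obtain finite dimension one must show that the growth actually \emph{halts}, and this is exactly where expansiveness must enter essentially. The guiding intuition is that once $N$ is large enough for the $c$-balls of $d_N$ to be $d$-tiny (uniform expansiveness), enlarging the window should no longer force additional polyhedral dimension. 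To realize this I would pass to a finite generating open cover $\alpha$ of mesh below $c$ (whose existence is equivalent to expansiveness), analyze the refinements $\bigvee_{|i|\le N}T^i\alpha$ through the multiplicity function counting how often an orbit meets the overlap region of $\alpha$, and try to organize the counting so that the order of these covers stays bounded. The hard part is that expansiveness provides no contraction \emph{rate}, so orbits may revisit the overlaps arbitrarily often; controlling the resulting multiplicity, by a careful choice and successive refinement of the generator that turns the uniform-separation property into a genuine cap on the order of the refined covers, is the heart of the matter, and is where I expect Ma\~{n}\'{e}'s essential new idea to be required.
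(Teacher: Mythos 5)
Your first two steps are correct and coincide with the paper's setup: the compactness argument for uniform expansiveness is exactly the paper's Lemma \ref{lemma: long time boundedness}, and the reduction $\widim_\varepsilon(X,d)\le \widim_{2c}\bigl(X,d^T_{[-N(\varepsilon),N(\varepsilon)]}\bigr)$ is the right one, so the theorem indeed reduces to the single assertion $\sup_N \widim_{2c}\bigl(X,d^T_{[-N,N]}\bigr)<\infty$. But that assertion is precisely what you do not prove: you correctly diagnose that subadditivity only yields a linear bound, and then you gesture at a scheme (generating cover, multiplicity of visits to overlap regions) which you yourself describe as the unresolved ``heart of the matter.'' So the proposal has a genuine gap, and it sits exactly at the step where Ma\~{n}\'{e}'s idea is needed; moreover, the multiplicity-counting route you sketch is not the mechanism that closes it.

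The missing idea, as implemented in the paper's \S \ref{section: first proof of first main theorem} (specialized to $k=1$), is a \emph{boundary separation lemma} (Lemma \ref{lemma: boundary and interior behaviors}): there is $\delta>0$ such that whenever $c\le d^T_{[-N,N]}(x,y)\le 2c$, one already has $\max\bigl(d(T^Nx,T^Ny),\,d(T^{-N}x,T^{-N}y)\bigr)>\delta$. In words: separation at the intermediate scale anywhere inside the time-window forces a definite separation at the two \emph{endpoints} of the window; this is proved by the same kind of compactness argument you used for uniform expansiveness (translate the separating time to the center, pass to limits, contradict expansiveness). Granted this, fix a cover $\mathcal{U}$ of $X$ by $L$ open sets of $d$-mesh $<\delta$ and consider only $T^{N}\mathcal{U}\vee T^{-N}\mathcal{U}$, the pullback at the \emph{two boundary times}; its degree is at most $2L$, a constant independent of $N$, so it has a refinement $\mathcal{V}_N$ of order $\le 2L$. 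Now split each $V\in\mathcal{V}_N$ into equivalence classes under chains of points in $V$ with consecutive $d^T_{[-N,N]}$-distances $<c$. If some class had $d^T_{[-N,N]}$-diameter $>2c$, a point along the chain would lie at $d^T_{[-N,N]}$-distance in $[c,2c]$ from the starting point, hence by the boundary lemma the two would be $\delta$-separated at time $N$ or $-N$ --- impossible, since both lie in $V$, which at those times sits inside single elements of $\mathcal{U}$ of diameter $<\delta$. Splitting into classes does not increase the order, so $\widim_{2c}\bigl(X,d^T_{[-N,N]}\bigr)\le 2L$ for every $N$, giving $\dim X\le 2L<\infty$. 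The point you could not reach --- that the growth ``halts'' --- is thus not about controlling orbit returns to overlaps at all: it is that the order of the relevant covers is governed by the boundary of the time-window, which for $\mathbb{Z}$-actions has bounded (two-point) size; for $\mathbb{Z}^k$-actions the boundary has size $O(N^{k-1})$, which is exactly why the same argument yields finite \emph{mean} dimension of corank-one subactions rather than finite dimension.
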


Therefore infinite dimensional spaces (e.g. the infinite dimensional cube $[0,1]^{\mathbb{N}}$)
cannot admit expansive homeomorphisms.
This is a rather surprising result because the definition of expansiveness
seems to have nothing to do with dimension theory.
Fathi \cite[Corollaries 5.4 and 5.5]{Fathi} revisited this phenomena from the viewpoint of entropy theory and he proved:

\begin{theorem}[Fathi, 1989]  \label{theorem: Fathi}
 Let $T:X\to X$ be an expansive homeomorphism.
 If the topological entropy $h_{\mathrm{top}}(T)$ is zero, then $X$ is zero dimensional.
\end{theorem}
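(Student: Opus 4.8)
The plan is to manufacture a compatible metric on $X$ in which $T$ is uniformly expanding at small scales, and then to bound a \emph{metric} dimension of $X$ (Hausdorff, or equivalently upper box) by the topological entropy divided by the expansion rate. Since the topological dimension can never exceed the Hausdorff dimension of a compatible metric, a vanishing entropy will force the topological dimension to vanish as well.

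First I would upgrade expansiveness to a quantitative hyperbolicity statement. Using the expansive constant $c$ together with compactness, one passes to the usual uniform form of expansiveness: for every $\epsilon>0$ there is an $N$ such that $d(T^ix,T^iy)\le c$ for all $|i|\le N$ implies $d(x,y)\le\epsilon$. From this I would build, by a Frink-type metrization that averages $d$ along the orbit, a metric $D$ compatible with the topology together with constants $\lambda>1$ and $\eta>0$ such that whenever $0<D(x,y)\le\eta$ one has $\max\{D(Tx,Ty),\,D(T^{-1}x,T^{-1}y)\}\ge\lambda\,D(x,y)$. This ``adapted'' metric records that distinct nearby orbits must separate at a definite exponential rate, which is exactly the content of expansiveness; producing $D$ and verifying this two-sided expansion is the technical heart of the argument.

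Second, I would convert the expansion into a dimension bound. Fix $n$ and cover $X$ by dynamical (Bowen) balls $B_n(x,c)=\{y:d(T^ix,T^iy)\le c,\ |i|\le n\}$. The minimal number of such balls needed is, up to the standard spanning/separating comparison, $\exp\big((2n+1)(h_{\mathrm{top}}(T)+o(1))\big)$, because $T$ is expansive and hence its entropy may be computed at the single scale $c$. On the other hand, the two-sided $\lambda$-expansion forces each Bowen ball to have $D$-diameter at most a constant times $\lambda^{-n}$. Feeding these two estimates into the definition of the upper box (hence Hausdorff) dimension, with covering radius $r\sim\lambda^{-n}$ and $\log(1/r)\sim n\log\lambda$, yields
\[
 \dim X \;\le\; \dim_H(X,D)\;\le\;\frac{2\,h_{\mathrm{top}}(T)}{\log\lambda},
\]
the first inequality being Szpilrajn's theorem that topological dimension never exceeds the Hausdorff dimension of a compatible metric.

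The conclusion is then immediate: when $h_{\mathrm{top}}(T)=0$ the right-hand side is $0$, so $\dim_H(X,D)=0$ and therefore $\dim X=0$. (For comparison, any finite value of $h_{\mathrm{top}}(T)$ gives a finite bound, re-deriving the content of Theorem~\ref{theorem: Mane1}.) I expect the main obstacle to be the first step: manufacturing a compatible metric $D$ carrying a genuine, uniform two-sided expansion factor $\lambda>1$ at small scales, since essentially all of the force of expansiveness is spent there, whereas the passage from exponential separation to the box/Hausdorff bound in the second step is a routine covering computation.
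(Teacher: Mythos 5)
Your proposal is correct in outline and is essentially the route the paper itself takes: the paper does not prove Theorem \ref{theorem: Fathi} in isolation, but its proof of the generalization (Theorem \ref{second main theorem}, given in \S\ref{section: second proof of first main theorem and the proof of second main theorem}) is exactly this Fathi-style argument, specialized to $k=1$ with the trivial action $R$, where metric mean dimension of the trivial action is the upper box dimension and Theorem \ref{theorem: Lindenstrauss--Weiss} plays the role of your Szpilrajn inequality $\dim X \leq \dim_H(X,D)$. One caution on your first step, which you rightly identify as the technical heart: the single-step hyperbolicity $\max\bigl(D(Tx,Ty),D(T^{-1}x,T^{-1}y)\bigr)\geq \lambda D(x,y)$ with $\lambda>1$ does \emph{not} fall out of the Frink construction as directly as your sketch suggests. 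The natural function is $\rho(x,y)=\alpha^{-\mathbf{n}(x,y)}$, where $\mathbf{n}$ is the first separation time, and the quasi-metric inequality forces $\alpha^{l}<2$ for the constant $l$ coming from uniform continuity; since Frink's theorem only gives $\rho/4\leq D\leq\rho$, a one-step transfer yields expansion factor $\alpha/4<1$, not $\lambda>1$. Fathi does prove the genuine adapted hyperbolic metric statement, but it costs extra work (e.g.\ passing to powers of $T$), and it is more than the dimension bound needs: the paper sidesteps it entirely with the $n$-step estimate (Lemma \ref{lemma: property of D}: if $\max_{|u|<n}D(T^ux,T^uy)<1/(4\alpha)$ then $D(x,y)<\alpha^{-n}$), which follows immediately from the definition of $\rho$ because the factor $4$ is absorbed once rather than iterated, and which already gives your key covering inequality $\#\bigl(X,D,\alpha^{-n}\bigr)\leq\#\bigl(X,D^T_{[-n,n]},1/(4\alpha)\bigr)$. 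So if you replace your pointwise expansion claim by this weaker iterated version, your second step goes through verbatim and the whole proof closes; as stated, the first step is a repairable overstatement rather than a fatal gap.
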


The main purpose of this paper is to generalize the above theorems of Ma\~{n}\'{e} and
Fathi to \textit{multiparameter actions} (i.e. \textbf{$\mathbb{Z}^k$-actions}).
A continuous action $T:\mathbb{Z}^k\times X\to X$ on a compact metric space $(X,d)$ is
said to be \textbf{expansive} if there exists $c>0$ such that any two distinct points $x$ and $y$ in $X$
satisfy $\sup_{u \in \mathbb{Z}^k} d(T^u x, T^u y) >c$.
At first sight, it looks \textit{nonsense} to study $\mathbb{Z}^k$-versions of Theorems \ref{theorem: Mane1} and \ref{theorem: Fathi}
because we can easily find examples which seemingly deny this direction:

\begin{example}  \label{example: easy counter-example}
  \begin{enumerate}
   \item Let $\mathbb{T}^2 = \mathbb{R}^2/\mathbb{Z}^2$ be the standard two dimensional torus and
   $h:\mathbb{T}^2\to \mathbb{T}^2$ a hyperbolic toral automorphism,  e.g.
    $h= \begin{pmatrix} 2 & 1 \\ 1& 1\end{pmatrix}$.
    $h$ is an expansive homeomorphism.
    Consider the two-sided infinite product $X:=\left(\mathbb{T}^2\right)^{\mathbb{Z}}$ (with the product topology) and let
    $\sigma: X\to X$ be the shift.
    Define $h_\mathbb{Z}:X\to X$ by $h_\mathbb{Z}(x_n)_{n\in \mathbb{Z}} = \left(h(x_n)\right)_{n\in \mathbb{Z}}$.
   Then $\sigma$ and $h_\mathbb{Z}$ generate an expansive $\mathbb{Z}^2$-action on $X$
    although $X$ is infinite dimensional
    (Shi--Zhou \cite[Proposition 4.2]{Shi--Zhou}).
    See also Example \ref{example: expansive algebraic action} below for a different kind of expansive actions
    on infinite dimensional spaces.
    \item Let $\mathrm{id}:\mathbb{T}^2\to \mathbb{T}^2$ be the identity map.
    Then $\mathrm{id}$ and the hyperbolic toral automorphism $h:\mathbb{T}^2\to \mathbb{T}^2$
    generate an expansive $\mathbb{Z}^2$-action on $\mathbb{T}^2$ with zero topological entropy
    although $\mathbb{T}^2$ is positive dimensional.
    With a bit more effort, we can construct an expansive $\mathbb{Z}^2$-action of zero topological entropy on 
    an infinite dimensional space. 
    (Let $D=0$ in Example \ref{example of zero mean dimension} below.)
   \end{enumerate}
\end{example}

The above examples show that
we cannot naively generalize Theorems \ref{theorem: Mane1} and \ref{theorem: Fathi} to $\mathbb{Z}^k$-actions.
We have to change our viewpoint.

It turns out that \textbf{mean dimension theory} provides a reasonable framework for the problem.
This is a topological invariant of dynamical systems introduced by Gromov \cite{Gromov},
which counts the number of variables \textit{per iterate} to describe a point in a dynamical system.
We denote by $\mdim(X,T)$ the mean dimension of a continuous action $T:\mathbb{Z}^k\times X\to X$.
We will review its definition in \S \ref{section: mean dimension}.
It is known that (if $k\geq 1$) mean dimension is zero for all finite dimensional systems and
finite topological entropy systems.
The $\mathbb{Z}^k$-shift on $\left([0,1]^D\right)^{\mathbb{Z}^k}$ has mean dimension $D$ and
the $\mathbb{Z}^k$-shift on $\left([0,1]^{\mathbb{N}}\right)^{\mathbb{Z}^k}$ has infinite mean dimension.

Mean dimension has applications to topological dynamics
\cite{Lindenstrauss--Weiss, Lindenstrauss, Gutman 1, Gutman--Lindenstrauss--Tsukamoto,
Gutman--Tsukamoto minimal, Gutman--Qiao--Tsukamoto}.
As an illustration, we review one result \cite[Main Theorem 1]{Gutman--Qiao--Tsukamoto}
which originated in the work of Lindenstrauss \cite[Theorem 5.1]{Lindenstrauss}:
\textit{If a  free minimal $\mathbb{Z}^k$-action $(X,T)$ satisfies
$\mdim(X,T) < D/2$ then we can equivariantly embed it in the $\mathbb{Z}^k$-shift on $\left([0,1]^D\right)^{\mathbb{Z}^k}$.}
The value $D/2$ here is optimal.
This shows in particular that a  free minimal $\mathbb{Z}^k$-action  $(X,T)$ has finite mean dimension if and only if one   can equivariantly embed it in the $\mathbb{Z}^k$-shift on $\left([0,1]^D\right)^{\mathbb{Z}^k}$, for some finite $D$.

The following is our first main result.

\begin{theorem} \label{Theorem: Mean dimension of co-rank 1 commuting action}
Let $T:\mathbb{Z}^k\times X\to X$ be an expansive action on a compact metric space $X$,
and let $R:\mathbb{Z}^{k-1} \times X \to X$ be a continuous action that commutes with $T$. 
Namely, $T^{v} \circ R^{u} = R^{u} \circ T^{v}$ for all $v \in \mathbb{Z}^k$ and $u \in \mathbb{Z}^{k-1}$.
Then  
\[  \mdim\left(X,R\right) < \infty. \]
\end{theorem}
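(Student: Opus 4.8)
The plan is to bound the \emph{metric} mean dimension of $R$ and then invoke the Lindenstrauss--Weiss inequality $\mdim(X,R)\le\mdim_{\mathrm M}(X,R,d)$, so that the whole problem becomes an estimate on the growth of the $\epsilon$-entropy of $R$. Since $T$ and $R$ commute they generate one continuous action $S$ of $G=\mathbb{Z}^{k}\oplus\mathbb{Z}^{k-1}$ by $S^{(v,u)}=T^{v}R^{u}$, and because distinct points already separate under $T$, $S$ is expansive with the same constant $c$; in particular $h_{\mathrm{top}}(S)<\infty$, as expansive actions have finite topological entropy. Writing $d^{R}_{F}(x,y)=\max_{u\in F}d(R^{u}x,R^{u}y)$ for finite $F\subset\mathbb{Z}^{k-1}$ and letting $N(\cdot,\epsilon)$ be the maximal cardinality of an $\epsilon$-separated set, the $\epsilon$-entropy rate is $s(R,\epsilon)=\lim_{F}|F|^{-1}\log N(d^{R}_{F},\epsilon)$ and $\mdim_{\mathrm M}(X,R,d)=\liminf_{\epsilon\to0}s(R,\epsilon)/\log(1/\epsilon)$. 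Thus it suffices to prove $s(R,\epsilon)\le C\log(1/\epsilon)$ with $C$ independent of $\epsilon$.

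To bring in expansiveness I first record its reconstruction form: for each $\epsilon>0$ there is a finite $W_\epsilon\subset\mathbb{Z}^{k}$ with $\max_{v\in W_\epsilon}d(T^{v}x,T^{v}y)\le c\Rightarrow d(x,y)\le\epsilon$, and, following Fathi, after passing to a compatible adapted metric in which $T$ expands nearby points at a definite rate one may take $W_\epsilon$ of radius $\sim\log(1/\epsilon)$. Using the commutation $T^{v}R^{u}=R^{u}T^{v}$, the contrapositive of this implication turns an $(F,\epsilon)$-separated set for $R$ into a $(W_\epsilon\times F,c)$-separated set for $S$, so that $N(d^{R}_{F},\epsilon)\le N(d^{S}_{W_\epsilon\times F},c)$ and
\[ s(R,\epsilon)\ \le\ \lim_{F}|F|^{-1}\log N(d^{S}_{W_\epsilon\times F},c). \]
The crudest estimate of the right-hand side treats the box $W_\epsilon\times F$ as a generic window of the finite-entropy action $S$, giving $\lesssim|W_\epsilon|\,h_{\mathrm{top}}(S)\sim(\log(1/\epsilon))^{k}$. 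When $k=1$ the window is one dimensional, $R$ is trivial, and this is precisely Fathi's bound that the box dimension of $X$ is finite, recovering the theorem of Ma\~{n}\'{e} and Fathi.

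The crux for $k\ge2$ is that this crude bound is too large by a factor $(\log(1/\epsilon))^{k-1}$: the reconstruction window $W_\epsilon$ occupies all $k$ directions of $\mathbb{Z}^{k}$, whereas only one of them should survive as a genuine refinement. The mechanism I would make precise is that $k-1$ of the $T$-directions in $W_\epsilon$ should be absorbable into the averaging over $F$—since we divide by $|F|$ and let $F$ exhaust $\mathbb{Z}^{k-1}$, amalgamating a \emph{bounded} family of $T$-translates with the averaging should enlarge the effective averaging region only to a set of asymptotically equal cardinality and so not multiply the rate—while the remaining transverse direction, the single extra coordinate that the rank-$k$ action $T$ has over the rank-$(k-1)$ action $R$, contributes the surviving factor $\log(1/\epsilon)$. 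Carrying this out would give $s(R,\epsilon)\le C\log(1/\epsilon)$, hence $\mdim_{\mathrm M}(X,R,d)\le C<\infty$. I expect this absorption step to be the main obstacle, precisely because for a general commuting $R$ there is no \emph{a priori} splitting of the $T$-directions into ``parallel'' and ``transverse'' ones relative to $R$; producing such a splitting from the commutation relation and the expansiveness of the \emph{full} rank-$k$ action $T$ is where the real work lies, and is exactly the point where the hypothesis on $T$ rather than on a subaction is indispensable. Should the metric mean dimension prove infinite for some such $R$, one would instead bound the width dimension $\widim_\epsilon$ of $d^{R}_{F}$ directly, replacing the entropy count by a covering-order argument in the spirit of Ma\~{n}\'{e}, but exploiting the expansive structure and the rank condition in the same manner.
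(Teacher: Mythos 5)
Your overall framework coincides with the paper's second proof---bound the metric mean dimension of $R$ and invoke Lindenstrauss--Weiss (Theorem \ref{theorem: Lindenstrauss--Weiss}), using a Fathi-style adapted metric so that the reconstruction window $W_\varepsilon$ has radius $\sim\log(1/\varepsilon)$---but the argument has a genuine gap precisely at the point you yourself flag as ``where the real work lies'', and moreover the step you treat as safe is not. Your ``crude estimate'' $\lim_F|F|^{-1}\log N\bigl(d^S_{W_\varepsilon\times F},c\bigr)\lesssim|W_\varepsilon|\,h_{\mathrm{top}}(S)$ is false: for fixed $\varepsilon$ and $F\to\infty$, the window $W_\varepsilon\times F$ is a \emph{thin slab} in $\mathbb{Z}^k\oplus\mathbb{Z}^{k-1}$, and per-volume counts over thin slabs are directional-entropy-type quantities that are not controlled by the entropy of $S$; entropy controls per-volume counts only over windows that are thick in all $2k-1$ directions. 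Concretely, take $k=2$, $T=(\sigma,h_{\mathbb{Z}})$ on $X=(\mathbb{T}^2)^{\mathbb{Z}}$ as in Example \ref{example: easy counter-example}(1), and $R=\sigma$: then $S^{((a,b),c)}=\sigma^{a+c}h_{\mathbb{Z}}^{b}$ factors through a $\mathbb{Z}^2$-action of finite entropy, so $h_{\mathrm{top}}(S)=0$, yet $\lim_F|F|^{-1}\log N\bigl(d^S_{W_\varepsilon\times F},c\bigr)$ is positive (it is comparable to $\log(1/\varepsilon)$, not to $|W_\varepsilon|\cdot 0$). Second, the ``absorption'' mechanism cannot work as described: $W_\varepsilon$ lies in the $T$-directions and $F$ in the $R$-directions of $\mathbb{Z}^k\oplus\mathbb{Z}^{k-1}$, and no merging of bounded $T$-translates into the $R$-averaging is possible without a lemma that relates the two actions; asserting such a splitting is essentially asserting the theorem.

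The two ingredients that close the gap in the paper are the following, and both are absent from your proposal. (i) A coding lemma (Shereshevsky; Lemma \ref{lem: Coding commuting action}): by expansiveness of $T$ and commutativity there is $K$ such that $d^T_{[-KN,KN]^k}(x,y)\le 2c$ implies $d^R_{[-N,N]^{k-1}}(x,y)\le 2c$. This converts $R$-window counts into $T$-window counts over boxes thick in \emph{all} $k$ directions of $\mathbb{Z}^k$, where finiteness of $h_{\mathrm{top}}(T)$ genuinely applies; the joint action $S$ and its thin slabs never appear. (ii) The scale must be tied to the window size rather than sending $F\to\infty$ at fixed $\varepsilon$: since $S(X,R,D,\varepsilon)$ is an \emph{infimum} over window sizes (the second equality in (\ref{eq: elementary bound on S})), for $\varepsilon\approx\alpha^{-N}$ it suffices to bound the count over the single window $[-N,N]^{k-1}$, which by (i) and Lemma \ref{lemma: property of D} is at most $\#\bigl(X,D^T_{[-(K+1)N,(K+1)N]^k},1/(4\alpha)\bigr)\le\exp\bigl(((2N+1)(K+1))^k(h_{\mathrm{top}}(T)+\delta)\bigr)$; dividing by $(2N+1)^{k-1}\log(1/\varepsilon)\approx N^k\log\alpha$ gives the finite bound $2(K+1)^k h_{\mathrm{top}}(T)/\log\alpha$. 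This is the rigorous form of your ``absorption'': the one extra $T$-direction, of length $\sim\log(1/\varepsilon)$, is divided out by the $\log(1/\varepsilon)$ in the definition of metric mean dimension---it is not, and cannot be, absorbed into the $F$-average. Without (i) and (ii) your argument does not yield $s(R,\varepsilon)\le C\log(1/\varepsilon)$, so the proof is incomplete.
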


For a subgroup $A\subset \mathbb{Z}^k$ we denote by $T|_A: A\times X \to X$ the restriction of $T$ to $A$.
Letting $R=T|_A$ with $\mathrm{rank}\, A = k-1$ we have the following special case:

\begin{corollary}  \label{corollary of  first main theorem}
Let $T:\mathbb{Z}^k\times X\to X$ be an expansive action on a compact metric space $X$.
Then for any subgroup $A\subset \mathbb{Z}^k$ with $\mathrm{rank}\, A = k-1$
\[  \mdim\left(X,T|_A\right) < \infty. \]
Namely, the restriction of $T$ to any rank $(k-1)$ subgroup has finite mean dimension.
\end{corollary}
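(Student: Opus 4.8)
The plan is to obtain the corollary as the special case $R = T|_A$ of Theorem \ref{Theorem: Mean dimension of co-rank 1 commuting action}, so that essentially no new work is required once the hypotheses of that theorem are checked for this choice. Thus I would first confirm that $T|_A$ is a legitimate input to Theorem \ref{Theorem: Mean dimension of co-rank 1 commuting action}, and then simply invoke it.

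First I would observe that $A$ is a subgroup of the free abelian group $\mathbb{Z}^k$, hence itself free abelian, and since $\mathrm{rank}\,A = k-1$ we have $A \cong \mathbb{Z}^{k-1}$. Fixing such an isomorphism identifies $T|_A : A \times X \to X$ with a continuous action $R : \mathbb{Z}^{k-1}\times X \to X$. Because mean dimension is an invariant of the abstract group action (automorphisms of $\mathbb{Z}^{k-1}$ carry F\o lner sequences to F\o lner sequences, and the defining limit is independent of the F\o lner sequence), the value $\mdim(X, T|_A)$ does not depend on the chosen identification, so there is no ambiguity in the statement.

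Next I would check the commutation hypothesis. For every $v \in \mathbb{Z}^k$ and $u \in A$ we have $T^{v}\circ T^{u} = T^{v+u} = T^{u+v} = T^{u}\circ T^{v}$, since $\mathbb{Z}^k$ is abelian and $T$ is an action; writing $R^u = T^u$ for $u \in A$, this is exactly the required identity $T^{v}\circ R^{u} = R^{u}\circ T^{v}$. Hence all hypotheses of Theorem \ref{Theorem: Mean dimension of co-rank 1 commuting action} are satisfied, and it yields $\mdim(X, T|_A) < \infty$, which is the assertion of the corollary.

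The point I want to stress is that at the level of the corollary there is no genuine obstacle: it is a verbatim specialization of the main theorem, and the only things to confirm are the purely algebraic facts that a rank $(k-1)$ subgroup of $\mathbb{Z}^k$ is isomorphic to $\mathbb{Z}^{k-1}$ and that its action commutes with $T$ automatically. All of the real difficulty is concentrated in Theorem \ref{Theorem: Mean dimension of co-rank 1 commuting action} itself, whose substance is to convert the expansivity of the full $\mathbb{Z}^k$-action into a finite bound on the averaged dimension of a codimension-one sub-action; I would expect that step---controlling $\widim_\epsilon$ of the $R$-orbit metric in terms of the behaviour of $T$ in the single transverse direction---to be where the work lies.
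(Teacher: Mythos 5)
Your proposal is correct and matches the paper's own treatment: the paper likewise obtains the corollary by setting $R = T|_A$ in Theorem \ref{Theorem: Mean dimension of co-rank 1 commuting action}, the commutation being automatic since $\mathbb{Z}^k$ is abelian. Your additional checks (that a rank $(k-1)$ subgroup of $\mathbb{Z}^k$ is isomorphic to $\mathbb{Z}^{k-1}$ and that the value of $\mdim(X,T|_A)$ is independent of the chosen identification) are exactly the points the paper leaves implicit.
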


\begin{remark}
   Here are several remarks on the theorem:
  \begin{itemize}
     \item When $k=1$, an action $R:\mathbb{Z}^0 \times X \to X$ is the trivial action,  and the mean dimension
     $\mdim\left(X, R\right)$ is equal to the topological dimension $\dim X$.
     Thus the statement of Theorem \ref{Theorem: Mean dimension of co-rank 1 commuting action} and also of Corollary \ref{corollary of  first main theorem} reduce to the original theorem of
     Ma\~{n}\'{e} (Theorem \ref{theorem: Mane1}) in this case.
     \item In Example \ref{example: easy counter-example} (1), the mean dimension of $\sigma$ is two and
     the mean dimension of $h_\mathbb{Z}$ is zero.
     \item Expansive actions always have finite topological entropy.
     So the mean dimension of an expansive action $T:\mathbb{Z}^k\times X\to X$ itself is zero
     since finite topological entropy systems are zero mean dimensional.
     This is trivial and uninteresting.
     The point of Corollary \ref{corollary of  first main theorem} is that it provides a nontrivial information of
     actions of \textit{infinite index subgroups}.
     Our viewpoint is summarized by the following correspondence:
     \[  \text{Expansiveness of $\mathbb{Z}^k$-actions} \longleftrightarrow
          \text{mean dimension of $\mathbb{Z}^{k-1}$-actions}. \]
  \end{itemize}
\end{remark}

\begin{example}  \label{example: expansive algebraic action}
Let $\mathbb{T} = \mathbb{R}/\mathbb{Z}$ be the circle and consider the infinite
product $\mathbb{T}^{\mathbb{Z}^2}$ index by $\mathbb{Z}^2$.
Let $\sigma$ be the $\mathbb{Z}^2$-shift on it.
We define a $\sigma$-invariant closed subset $X$ of $\mathbb{T}^{\mathbb{Z}^2}$ by
\[  X= \left\{(x_{mn})_{(m,n)\in \mathbb{Z}^2}\in \mathbb{T}^{\mathbb{Z}^2} \middle|\,
     \forall (m,n)\in \mathbb{Z}^2: \,  3x_{m n} + x_{m+1, n} + x_{m, n+1} = 0 \right\}. \]
Then $(X,\sigma)$ is expansive.
(This fact can be directly and easily checked.
See Schmidt \cite{Schmidt} for a more general and systematic study of this kind of examples.)
We can check that for any rank one subgroup $A\subset \mathbb{Z}^2$ the mean dimension
$\mdim\left(X,\sigma|_A\right)$ is positive and finite.
\end{example}

We will provide two proofs of Theorem \ref{Theorem: Mean dimension of co-rank 1 commuting action}.
The first proof (given in \S \ref{section: first proof of first main theorem})
has the advantage that it is elementary and self-contained.
It uses only the definitions of mean dimension.
The second proof (given in \S \ref{section: second proof of first main theorem and the proof of second main theorem})
requires more machineries, in particular Lindenstrauss--Weiss' theory of
\textit{metric mean dimension} \cite{Lindenstrauss--Weiss}.
The advantage of the second approach is that it also provides the following generalization of
Fathi's theorem (Theorem \ref{theorem: Fathi}).
This is our second main result:

\begin{theorem} \label{second main theorem}
 Let $T:\mathbb{Z}^k\times X\to X$ be an expansive action on a compact metric space $X$ and
 let $R:\mathbb{Z}^{k-1} \times X \to X$ be a continuous action that commutes with $T$.
 If the topological entropy of $T$ is zero, then 
\[ \mdim\left(X, R \right) = 0. \]
\end{theorem}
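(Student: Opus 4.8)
The plan is to deduce Theorem~\ref{second main theorem}, and simultaneously reprove Theorem~\ref{Theorem: Mean dimension of co-rank 1 commuting action}, from a single quantitative inequality of the shape
\[ \mdim\left(X,R\right)\le C\cdot h_{\mathrm{top}}(T), \]
where $C$ depends only on the expansiveness data of $T$. This one inequality settles everything: expansive actions have finite entropy, so it gives $\mdim(X,R)<\infty$ (the first main theorem), and when $h_{\mathrm{top}}(T)=0$ it forces $\mdim(X,R)=0$ (the second). Thus the entire task is to exhibit the constant $C$ and establish the bound.

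The first step is to trade mean dimension for \emph{metric} mean dimension. By the Lindenstrauss--Weiss inequality $\mdim(X,R)\le\mdim_M(X,D,R)$, valid for every compatible metric $D$, it suffices to estimate
\[ \mdim_M(X,D,R)=\liminf_{\varepsilon\to 0}\frac{h_R(\varepsilon,D)}{\log(1/\varepsilon)},\qquad h_R(\varepsilon,D)=\lim_{F}\frac{\log\#\!\left(X,D^R_F,\varepsilon\right)}{|F|}, \]
where $D^R_F(x,y)=\max_{u\in F}D(R^ux,R^uy)$, the number $\#(X,\rho,\varepsilon)$ is the maximal cardinality of an $\varepsilon$-separated set, and $F$ runs over a F{\o}lner sequence in $\mathbb{Z}^{k-1}$. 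The freedom to choose $D$ is the decisive point, and I would take $D$ to be an \emph{adapted metric} for the expansive action $T$, after Fathi. The property I want is a rate $\lambda>1$ and a scale $\varepsilon_0>0$ so that any two points which are $\varepsilon$-close in $D$ are driven apart to distance $\ge\varepsilon_0$ by some $T^{v}$ with $v$ lying in a window $E(\varepsilon)\subset\mathbb{Z}^k$ of radius only $\asymp\log(1/\varepsilon)/\log\lambda$. Producing such a metric for a genuine $\mathbb{Z}^k$-action, rather than for a single homeomorphism, is the first technical task; I expect to obtain it by running Fathi's telescoping construction on the generators of $T$.

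With the adapted metric fixed, expansiveness converts $\varepsilon$-separation for $R$ into $\varepsilon_0$-separation for the joint $T$--$R$ system. If $x,y$ are $\varepsilon$-separated under $D^R_F$, then $D(R^{u_0}x,R^{u_0}y)>\varepsilon$ for some $u_0\in F$, and the adapted property together with the commutation $T^{v}R^{u}=R^{u}T^{v}$ yields a $v_0\in E(\varepsilon)$ with $D(T^{v_0}R^{u_0}x,T^{v_0}R^{u_0}y)\ge\varepsilon_0$. Consequently
\[ \log\#\!\left(X,D^R_F,\varepsilon\right)\le\log\#\!\left(X,\left(D^T_{E(\varepsilon)}\right)^R_F,\varepsilon_0\right), \]
where $D^T_E(p,q)=\max_{v\in E}D(T^vp,T^vq)$ is the $T$-Bowen metric over $E$. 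Dividing by $|F|$ and sending $F\to\infty$ exhibits the right-hand side as a \emph{slab entropy}: the $R$-directions escape to infinity while the $T$-window remains pinned at thickness $\ell(\varepsilon)\asymp\log(1/\varepsilon)/\log\lambda$. Writing $\phi_T(\ell)$ for this slab entropy, I thus obtain $h_R(\varepsilon,D)\le\phi_T(\ell(\varepsilon))$.

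The final step is a subadditivity estimate. The function $\phi_T(\ell)$ is subadditive in the slab thickness $\ell$, so by the Ornstein--Weiss lemma $\phi_T(\ell)/\ell$ converges, and the limit equals $h_{\mathrm{top}}(T)$ because the entropy of the expansive action can be computed by first filling the $(k-1)$ slab directions and then thickening. Combining this with $\ell(\varepsilon)\asymp\log(1/\varepsilon)/\log\lambda$ gives
\[ \mdim_M(X,D,R)\le\liminf_{\varepsilon\to 0}\frac{\phi_T(\ell(\varepsilon))}{\log(1/\varepsilon)}=\liminf_{\varepsilon\to 0}\frac{\phi_T(\ell(\varepsilon))}{\ell(\varepsilon)\,\log\lambda}=\frac{h_{\mathrm{top}}(T)}{\log\lambda}, \]
which is the desired inequality with $C=1/\log\lambda$. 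The main obstacle, and the place where the hypothesis that $R$ has rank \emph{exactly one less} than $T$ enters, is the slab-entropy step: I must check that after normalising by $|F|$ only the single complementary $T$-direction of the window survives (the transverse contributions being a boundary effect), so that the governing rate is $h_{\mathrm{top}}(T)$ and not the larger, uncontrolled entropy of the full joint system. For $R=T|_A$ this is transparent, since the slab is literally a slab for $T$; for a general commuting $R$ it requires carefully organising the joint $T$--$R$ system so that the corank-one relation pins the effective thickness, and I anticipate this bookkeeping, alongside the construction of the adapted metric, to be the real work of the proof.
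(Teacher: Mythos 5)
Your overall architecture is exactly that of the paper's second proof: build a Fathi-type adapted metric $D$ for the expansive $\mathbb{Z}^k$-action (the paper does this via Frink's metrization theorem applied to $\rho(x,y)=\alpha^{-\mathbf{n}(x,y)}$, where $\mathbf{n}(x,y)$ is the first radius at which the $T$-orbits separate to scale $c$), pass to metric mean dimension via Theorem \ref{theorem: Lindenstrauss--Weiss}, convert $\varepsilon$-separation for $R$ into fixed-scale separation for the joint system over a $T$-window of radius $\asymp\log(1/\varepsilon)/\log\alpha$, and conclude a bound of the shape $\mdim(X,R)\le C\,h_{\mathrm{top}}(T)$; the paper's version is $\overline{\mdim}(X,R,D)\le 2(K+1)^k\,h_{\mathrm{top}}(T)/\log\alpha$, which yields both main theorems at once just as you intend. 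Up to and including your inequality $\#\left(X,D^R_F,\varepsilon\right)\le\#\left(X,\left(D^T_{E(\varepsilon)}\right)^R_F,\varepsilon_0\right)$, your sketch matches the paper's (\ref{eq: bound on R by T}).

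The gap is the slab-entropy step, and it is genuine on several counts. First, $\phi_T(\ell)$ is not subadditive in the scalar $\ell$: the window $E(\ell)$ is a $k$-dimensional cube, so the available subadditivity is over translates of the window inside $\mathbb{Z}^k$, and Ornstein--Weiss then normalizes by $|E(\ell)|\asymp\ell^k$, not by $\ell$; nothing in your setup forces linear growth in $\ell$. Second, the asserted equality $\lim_\ell\phi_T(\ell)/\ell=h_{\mathrm{top}}(T)$ is false in general: for the trivial action $R$ the count $\#\left(X,\left(D^T_{E(\ell)}\right)^R_F,\varepsilon_0\right)$ is independent of $F$, so $\phi_T(\ell)\equiv 0$ whatever $h_{\mathrm{top}}(T)$ is; only the upper bound $\phi_T(\ell)\le C\,\ell\,h_{\mathrm{top}}(T)$ can hold, and it is precisely the nontrivial point. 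Third, and most importantly, your order of limits ($F\to\infty$ first, at fixed $\ell$) destroys the one tool that proves that upper bound. The mechanism that dominates the $R$-directions by the $T$-directions is Shereshevsky's coding lemma (Lemma \ref{lem: Coding commuting action}), proved by induction from expansiveness (via Lemma \ref{lemma: long time boundedness}) and commutation: $D^T_{[-KN,KN]^k}(x,y)$ small forces $D^R_{[-N,N]^{k-1}}(x,y)$ small. This bounds the joint count over $[-N,N]^{k-1}\times E(\ell)$ by a pure $T$-count over a cube of side $\asymp KN+\ell$, whose logarithm is at most $(2KN+2\ell+1)^k\left(h_{\mathrm{top}}(T)+\delta\right)$; divided by $(2N+1)^{k-1}$ this diverges as $N\to\infty$ at fixed $\ell$, so your F{\o}lner limit at fixed slab thickness renders the coding bound useless, and there is no a priori replacement (the scale entropy of a general commuting $R$ at scale $\alpha^{-\ell}$, which is what $\phi_T(\ell)$ amounts to, need not grow linearly in $\ell$ without coding). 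The paper's fix --- and what your ``bookkeeping'' would have to become --- is to tie the two windows together: use the infimum characterization of the scale entropy (the second equality in (\ref{eq: elementary bound on S})) and evaluate at the single choice $N=\ell\asymp\log(1/\varepsilon)/\log\alpha$, giving $S(X,R,D,\varepsilon)\le(2N+1)(K+1)^k\left(h_{\mathrm{top}}(T)+\delta\right)$ and hence the theorem, with a constant necessarily carrying the coding factor $(K+1)^k$ that your claimed $C=1/\log\lambda$ cannot avoid.
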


\begin{corollary} \label{corollary of second main theorem}
 Let $T:\mathbb{Z}^k\times X\to X$ be an expansive action on a compact metric space $X$.
 If the topological entropy of $T$ is zero, then for any subgroup $A\subset \mathbb{Z}^k$ with $\mathrm{rank}\, A = k-1$
 \[ \mdim\left(X, T|_A\right) = 0. \]
\end{corollary}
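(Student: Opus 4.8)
The plan is to deduce this corollary directly from Theorem \ref{second main theorem}, in exactly the same way that Corollary \ref{corollary of first main theorem} follows from Theorem \ref{Theorem: Mean dimension of co-rank 1 commuting action}. All of the dynamical content is already carried by the second main theorem; what remains is only to check that the restriction of $T$ to a rank $(k-1)$ subgroup is a legitimate $\mathbb{Z}^{k-1}$-action meeting the hypotheses of that theorem.

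First I would observe that any subgroup $A\subset\mathbb{Z}^k$ of rank $k-1$ is itself free abelian of rank $k-1$, since every subgroup of a finitely generated free abelian group is free of no larger rank. Choosing an isomorphism $\phi:\mathbb{Z}^{k-1}\to A$ and setting $R^{u}:=T^{\phi(u)}$ for $u\in\mathbb{Z}^{k-1}$ then presents $T|_A$ as a continuous $\mathbb{Z}^{k-1}$-action $R:\mathbb{Z}^{k-1}\times X\to X$.

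Next I would verify the hypotheses of Theorem \ref{second main theorem}. Since $\mathbb{Z}^k$ is abelian, the homeomorphisms $T^{v}$ commute with one another, so $T^{v}\circ R^{u}=T^{v}\circ T^{\phi(u)}=T^{\phi(u)}\circ T^{v}=R^{u}\circ T^{v}$ for all $v\in\mathbb{Z}^k$ and $u\in\mathbb{Z}^{k-1}$; hence $R$ commutes with $T$. The expansiveness of $T$ and the vanishing of $h_{\mathrm{top}}(T)$ are hypotheses inherited unchanged. Applying Theorem \ref{second main theorem} to $R$ then gives $\mdim(X,R)=0$, i.e.\ $\mdim(X,T|_A)=0$.

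I do not expect any genuine obstacle here, since the corollary is a pure specialization. The one point deserving a word is that the statement writes $\mdim(X,T|_A)$ without fixing the identification $\phi$, so one should note that the conclusion is independent of this choice: two such identifications differ by an automorphism of $\mathbb{Z}^{k-1}$, and mean dimension is invariant under such reindexing of the acting group (an automorphism of $\mathbb{Z}^{k-1}$ carries a F\o lner sequence of boxes to another F\o lner sequence, and mean dimension does not depend on the F\o lner sequence used to compute it). Thus the value $0$ is unambiguous.
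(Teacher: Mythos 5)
Your proposal is correct and is exactly the paper's route: the authors obtain this corollary (like Corollary \ref{corollary of  first main theorem}) simply by setting $R=T|_A$ for a rank $(k-1)$ subgroup $A\subset\mathbb{Z}^k$, which commutes with $T$ since $\mathbb{Z}^k$ is abelian, and invoking Theorem \ref{second main theorem}. Your added remarks --- that $A$ is free abelian of rank $k-1$ so $T|_A$ really is a $\mathbb{Z}^{k-1}$-action, and that the value of $\mdim(X,T|_A)$ does not depend on the chosen identification $\phi:\mathbb{Z}^{k-1}\to A$ --- are correct routine checks the paper leaves implicit.
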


We would like to note that the expansiveness is an essential assumption in the statement of Theorem \ref{second main theorem}
(i.e. the zero entropy of $T$ alone does not imply $\mdim(X, R)=0$).
For example, the identity map $\mathrm{id}$ and the $\mathbb{Z}^k$-shift $\sigma$
on $[0,1]^{\mathbb{Z}^k}$ generate a (non-expansive) zero entropy $\mathbb{Z}^{k+1}$-action on $[0,1]^{\mathbb{Z}^k}$
although the mean dimension $\mdim\left([0,1]^{\mathbb{Z}^k},\sigma\right)$ is positive (equal to one).

\begin{example}   \label{example of zero mean dimension}
Let $h:\mathbb{T}^r \to \mathbb{T}^r$ be
 a hyperbolic toral automorphism as in Example \ref{example: easy counter-example} (1), where $r = 2$.
Let $\Lambda$ be a subset of $\mathbb{Z}$.
The \textbf{upper Banach density of} $\Lambda$ is given by
\[ D :=  \lim_{N\to \infty} \frac{\sup_{n\in \mathbb{Z}} |\Lambda \cap [n, n+N)|}{N} \in [0,1]. \]
This limit exists because $\sup_{n\in \mathbb{Z}} |\Lambda \cap [n, n+N)|$ is a subadditive function in $N$.
Let $A\subset \mathbb{T}^2$ be a non-empty closed $h$-invariant subset such that the topological entropy
$h_{\mathrm{top}}(A,h)$ of the restriction of $h$ on $A$ is zero.
For example we can choose $A= \{\text{the fixed point of $h$}\}$.
From Fathi's theorem (Theorem \ref{theorem: Fathi}), $A$ is zero dimensional.
Let $Y_0$ be the set of points $x=(x_n)_{n\in \mathbb{Z}}$ in $X = \left(\mathbb{T}^r\right)^\mathbb{Z}$
satisfying
\[  \forall n\in \mathbb{Z}\setminus \Lambda: \quad x_n \in A. \]
This is $h_\mathbb{Z}$-invariant but not $\sigma$-invariant.
We define $Y\subset X$ by
\[  Y = \overline{\bigcup_{n\in \mathbb{Z}} \sigma^n\left(Y_0\right)}. \]
$Y$ is invariant under both $h_\mathbb{Z}$ and $\sigma$.
So they generate an expansive $\mathbb{Z}^2$-action on $Y$.
The topological entropy of this $\mathbb{Z}^2$-action is given by
\[ h_{\mathrm{top}}\left(Y, \sigma, h_{\mathbb{Z}} \right) = h_{\mathrm{top}}(h) \, D. \]
Here $h_{\mathrm{top}}(h)$ is the topological entropy of $h: \mathbb{T}^r \to \mathbb{T}^r$.
The above formula can be verified by a direct computation,
or by using the following well-known formula  (see \cite[Lemma $3.1$]{Pavlov}):
\[
h_{\mathrm{top}}\left(Y, \sigma, h_{\mathbb{Z}} \right) = \lim_{n \to \infty} \frac{1}{N} h_{\mathrm{top}}\left(\pi_N(Y),h_N\right),\]
where $\pi_N:Y \to (\mathbb{T}^r)^{\{1,\ldots,N\}}$ is the obvious projection map and $h_N:\pi_N(Y) \to \pi_N(Y)$ is defined by applying $h$ on each coordinate.

On the other  hand the mean dimension of $\sigma$ on $Y$ is given by
\[ \mdim\left(Y, \sigma \right) = r D. \]
This formula relies on the fact that $\widim_\varepsilon(\mathbb{T}^m, \ell^{\infty}) = m$ 
for sufficiently small $\varepsilon$ independent of $m$, 
where $\ell^{\infty}$ is the metric on $\mathbb{T}^m$ that comes from  $\|\cdot\|_\infty$ norm on $\mathbb{R}^m$ 
(see Lemma \ref{lemma: widim of M^n}  below). 

We see that in this case $\mdim\left(Y,\sigma \right)$ becomes zero exactly
when $h_{\mathrm{top}}\left(Y, \sigma, h_{\mathbb{Z}}\right)$ is zero.
This behavior is (of course) compatible with the statement of Theorem \ref{second main theorem}.
We also would like to remark that
if $A$ is an infinite set then
the topological entropy of $\sigma$ on $Y$ is always infinite regardless of
the value of $D$.
Thus it is not the topological entropy $h_{\mathrm{top}}(Y,\sigma)$ but the mean dimension $\mdim(Y,\sigma)$
that reflects the circumstances properly.
\end{example}

\subsection{Jointly expansive automorphisms of $\mathbb{Z}^k$-actions} \label{subsection: automorphisms}

Here we discuss the materials in \S \ref{subsection: main results} from the viewpoint of \textit{automorphisms} of dynamical systems.
An advantage of this approach is that we can also apply it to general amenable group actions.
(See \S \ref{subsection: noncommutative versions} below.)

\begin{definition}  \label{definition: automorphism}
  Let $T:\mathbb{Z}^k\times X\to X$ be a continuous action (not necessarily expansive) on a compact metric space $X$.

   \begin{enumerate}
     \item A homeomorphism $f:X\to X$ is called an \textbf{automorphism} of $(X,T)$ if it commutes with
     the $T$-action: $T^u \circ f = f\circ T^u$ for all $u\in \mathbb{Z}^k$.

     \item An automorphism $f$ of $(X,T)$ is said to be \textbf{jointly expansive} if $f$ and $T$ generate
     an expansive $\mathbb{Z}^{k+1}$-action.
  \end{enumerate}
\end{definition}

\begin{example}  \label{example: jointly expansive automorphism}
The following are examples of existence/non-existence of jointly expansive automorphisms:

\begin{enumerate}
\item
Example \ref{example: easy counter-example} (1) shows that the shift $\sigma$ on $\left(\mathbb{T}^2\right)^{\mathbb{Z}}$
admits a jointly expansive automorphism $h_{\mathbb{Z}}$.

\item
The $\mathbb{Z}^k$-shift $\sigma$ on $[0,1]^{\mathbb{Z}^k}$ does not admit a jointly expansive automorphism:
If $f:[0,1]^{\mathbb{Z}^k}  \to [0,1]^{\mathbb{Z}^k}$ is a jointly expansive automorphism, then
it yields an expansive \textit{homeomorphism} on a fixed point set $\mathrm{Fix}(\sigma)$ of $\sigma$.
But $\mathrm{Fix}(\sigma)$ is homeomorphic to the unit interval $[0,1]$, which does not admit an expansive homeomorphism
(cf. \cite[Proposition 1.1.6]{Katok--Hasselblatt}).
\end{enumerate}
\end{example}

Example \ref{example: jointly expansive automorphism} (2)
shows that the set of periodic points are (sometimes) obstructions to the existence of
jointly expansive automorphisms.
But if a system is free (i.e. it has no periodic points), then we cannot use this obstruction.
Mean dimension provides another obstruction:

\begin{corollary}  \label{corollary of main theorem}
If a $\mathbb{Z}^k$-action $(X,T)$ admits a jointly expansive automorphism, then
$\mdim(X,T)$ is finite.
\end{corollary}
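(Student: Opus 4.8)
The plan is to reduce the statement directly to Theorem \ref{Theorem: Mean dimension of co-rank 1 commuting action}. By Definition \ref{definition: automorphism}, a jointly expansive automorphism $f$ of $(X,T)$ is a homeomorphism commuting with $T$ such that $f$ and $T$ together generate an expansive $\mathbb{Z}^{k+1}$-action. I would first make this generated action explicit: identifying $\mathbb{Z}^{k+1} = \mathbb{Z} \oplus \mathbb{Z}^k$, define $S:\mathbb{Z}^{k+1}\times X\to X$ by $S^{(n,u)} = f^n\circ T^u$ for $(n,u)\in \mathbb{Z}\oplus\mathbb{Z}^k$. The commutation relation $T^u\circ f = f\circ T^u$ guarantees that $S$ is a well-defined group action, and by the hypothesis that $f$ is jointly expansive, $S$ is expansive.

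The key observation is that the original action $T$, viewed as a $\mathbb{Z}^k$-action on $X$, commutes with the expansive $\mathbb{Z}^{k+1}$-action $S$. The only point to verify is that $T^v\circ S^{(n,u)} = S^{(n,u)}\circ T^v$ for every $v\in\mathbb{Z}^k$ and every $(n,u)\in\mathbb{Z}\oplus\mathbb{Z}^k$. This is immediate from the definitions: since $f$ commutes with every $T^w$, it also commutes with $f^n$ and with $T^v$, so that $T^v\circ f^n\circ T^u = f^n\circ T^{v+u} = f^n\circ T^u\circ T^v$, which is exactly $S^{(n,u)}\circ T^v$.

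With this in place I would apply Theorem \ref{Theorem: Mean dimension of co-rank 1 commuting action} with $k+1$ playing the role of $k$: the expansive action is $S:\mathbb{Z}^{k+1}\times X\to X$, and the commuting corank-one action is $R=T:\mathbb{Z}^k\times X\to X$. The theorem then yields $\mdim(X,T)<\infty$, which is precisely the assertion. I do not expect any serious obstacle here, since the entire mathematical content is already contained in Theorem \ref{Theorem: Mean dimension of co-rank 1 commuting action}; the corollary is just the translation of that theorem's hypotheses into the language of jointly expansive automorphisms. The one thing to be careful about is the bookkeeping of ranks, namely that $T$ enters as a corank-one subaction of the ambient $\mathbb{Z}^{k+1}$-action $S$, so that the ``$k-1$'' appearing in the theorem corresponds here to the full rank $k$ of the group generated by $T$ alone.
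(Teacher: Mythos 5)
Your proof is correct and takes exactly the route the paper does: the paper dismisses this as ``an immediate corollary of Theorem \ref{Theorem: Mean dimension of co-rank 1 commuting action},'' and your argument simply spells out that reduction (the generated expansive $\mathbb{Z}^{k+1}$-action $S$, the commutation of $T$ with $S$, and the application of the theorem with $k+1$ in place of $k$). The bookkeeping of ranks and the verification that $S$ is a genuine action are all handled correctly.
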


This is an immediate corollary of Theorem \ref{Theorem: Mean dimension of co-rank 1 commuting action}.
By using the method of Lindenstrauss--Weiss \cite[Proposition 3.5]{Lindenstrauss--Weiss},
we can easily construct plenty of examples of free (and, moreover, minimal) $\mathbb{Z}^k$-actions
of infinite mean dimension.
Such systems do not admit jointly expansive automorphisms although they have no periodic points.

Of course, in general, neither periodic points nor mean dimension provide
a sufficient criterion for the existence of a jointly expansive automorphism.
For example, an irrational rotation on the circle does not admit
a jointly expansive automorphism\footnote{A circle homeomorphism commuting with an irrational rotation
must be a rotation. It is proved in \cite[Theorem 3.1]{Shi--Zhou} that the circle does not admit an
expansive $\mathbb{Z}^k$-action for any $k\geq 1$.}
although it is free and zero mean dimensional.

\subsection{On expansive and minimal $\mathbb{Z}^k$-actions and mean dimension of lower rank subgroups}

As we briefly noted in the beginning, the original motivation of Ma\~{n}\'{e} came from
Bowen's work \cite{Bowen70}.
Bowen \cite{Bowen70} proved that hyperbolic minimal sets of a diffeomorphism are always zero dimensional.
Ma\~{n}\'{e} \cite{Mane} generalized this to a more abstract setting
(see also Artigue \cite{Artigue} for a recent new proof):

\begin{theorem}[Ma\~{n}\'{e}, 1979]  \label{theorem: Mane2}
   If $f:X\to X$ is an expansive and minimal homeomorphism on a compact metric space $X$, then $X$ is zero dimensional.
   Here $f$ is said to be minimal if the orbit $\{f^n x\}_{n\in \mathbb{Z}}$ is dense in $X$ for every $x\in X$.
\end{theorem}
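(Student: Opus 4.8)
The plan is to reduce the statement to showing that $X$ is totally disconnected, which for a compact metric space is equivalent to $\dim X = 0$, and then to rule out nondegenerate subcontinua by playing expansiveness against minimality (this is, in spirit, the continuum-theoretic route of Artigue \cite{Artigue}). Concretely, suppose $\dim X \ge 1$. Then $X$ is not totally disconnected, so it contains a nondegenerate subcontinuum $C_0$, i.e.\ a compact connected subset with more than one point. I would record at the outset the elementary but crucial consequence of expansiveness that one may call \emph{continuum-wise expansiveness}: if $c>0$ is an expansive constant, then every nondegenerate subcontinuum $C\subset X$ satisfies $\sup_{n\in\mathbb{Z}}\diam(T^nC)>c$, since $C$ contains two distinct points whose orbits must separate by more than $c$ and $\diam(T^nC)$ dominates their distance. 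Equivalently, no nondegenerate continuum can keep all of its two-sided iterates of diameter $\le c$: such a continuum would contain distinct $x,y$ with $d(T^nx,T^ny)\le c$ for every $n$, contradicting expansiveness. The entire proof is an attempt to manufacture exactly such a forbidden continuum.

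First I would produce a nondegenerate \emph{forward-stable} continuum, i.e.\ a nondegenerate subcontinuum $C_s$ with $\diam(T^nC_s)\le c$ for all $n\ge 0$. The idea is to track subcontinua up to the last moment before their diameter exceeds $c$: starting from $C_0$ and iterating, continuum-wise expansiveness forces the diameter to cross $c$ in forward time, and by working in the hyperspace of subcontinua equipped with the Hausdorff metric (which is compact) and taking a limit of suitably re-timed and re-centered pieces, I would extract a nondegenerate limit continuum whose forward orbit never exceeds size $c$. By continuum-wise expansiveness this $C_s$ must then blow up in backward time. Symmetrically, one obtains nondegenerate backward-stable continua.

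The heart of the argument, and the step I expect to be the main obstacle, is to use minimality to collide a stable continuum with the large-scale recurrence of the system and thereby produce a nondegenerate continuum that is bounded in \emph{both} time directions, contradicting continuum-wise expansiveness. Minimality enters through recurrence: the forward orbit of a point of $C_s$ returns to any neighborhood of $C_s$ with bounded gaps (syndetically), so along a subsequence $T^{n_j}C_s$ converges in the hyperspace to a nondegenerate continuum lying near $C_s$. Organizing these returns so that the recurrent limit inherits forward-stability from $C_s$ while simultaneously acquiring backward-stability, so that a single nondegenerate continuum ends up with $\sup_{n\in\mathbb{Z}}\diam(T^nC)\le c$, is exactly where the delicate bookkeeping lies; this is the mechanism by which minimal expansive dynamics cannot support a one-dimensional piece.

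I would keep Theorem \ref{theorem: Mane1} in reserve to make the ambient space finite-dimensional, which tames the compactness and hyperspace arguments above, and I would emphasize that Theorem \ref{theorem: Fathi} cannot simply be substituted here: minimal expansive systems may have positive topological entropy (for instance, positive-entropy minimal subshifts) while still being forced to be zero-dimensional, so minimality must be exploited through recurrence rather than routed through an entropy bound. The expected payoff of this organization is that expansiveness supplies the global separation forbidding an all-time-bounded continuum, while minimality supplies the recurrence that inevitably creates one, and the clash forces $X$ to be totally disconnected, hence $\dim X = 0$.
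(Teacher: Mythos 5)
First, a point of order: the paper states Theorem \ref{theorem: Mane2} without proof, deferring to \cite{Mane} and to Artigue \cite{Artigue} for a newer argument, so your attempt can only be measured against those known proofs. Your framing is partly sound: continuum-wise expansiveness (no nondegenerate subcontinuum $C$ can satisfy $\sup_{n\in\mathbb{Z}}\diam(f^nC)\leq c$) is correct, and the hyperspace blow-up argument producing a nondegenerate subcontinuum $C_s$ with $\diam(f^nC_s)\leq c$ for all $n\geq 0$ \emph{or} for all $n\leq 0$ is a standard and valid step (note it gives one of the two alternatives, not both symmetrically; one then passes to $f^{-1}$ to assume forward stability without loss of generality).

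The collision step, however, is where the proof genuinely fails, not merely where ``delicate bookkeeping'' remains. For an expansive homeomorphism, uniform expansivity forces any forward-stable pair to be asymptotic: if $d(f^nx,f^ny)\leq c$ for all $n\geq 0$, then $d(f^nx,f^ny)\to 0$ as $n\to\infty$, uniformly. Consequently $\diam(f^nC_s)\to 0$, so along \emph{any} sequence $n_j\to\infty$ the continua $f^{n_j}C_s$ converge in the Hausdorff metric to a singleton. Your concrete claim that the recurrent images ``converge in the hyperspace to a nondegenerate continuum lying near $C_s$'' is therefore false, and with it the mechanism for manufacturing a continuum bounded in both time directions: the forward dynamics degenerates stable continua before recurrence can return them intact, which is exactly why minimal expansive subshifts (which abound, and carry plenty of asymptotic pairs) do not produce any contradiction under your scheme. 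The actual proofs avoid this trap entirely: Ma\~{n}\'{e} uses minimality through the observation that the (nonempty, closed, forward-invariant) set of points lying on forward-$c$-stable nondegenerate continua must be all of $X$, and derives the contradiction from that global statement rather than from a two-sided bounded continuum; Artigue \cite{Artigue} instead shows that stable continua force the existence of a spiral point while minimal expansive systems admit none. The step you flag as ``the main obstacle'' is the theorem's entire content, and the one concrete assertion you make about it is the step that breaks; so the proposal, while correctly assembling the standard preliminaries, does not contain a proof.
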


Contrary to Theorems \ref{theorem: Mane1} and \ref{theorem: Fathi}, we do not currently have an appropriate
multiparameter version of Theorem \ref{theorem: Mane2}.
The following results preclude some seemingly plausible generalizations:

\begin{proposition}  \label{proposition: counter-example of multidimensional Mane2}
  There exists a positive mean dimensional $\mathbb{Z}$-action $(X,T)$ admitting a jointly expansive and
  minimal automorphism $f:X\to X$.
\end{proposition}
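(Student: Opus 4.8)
The plan is to realize the example as an expansive $\mathbb{Z}^{2}$-action $\langle T,f\rangle$ on an infinite dimensional space in which one generator $f$ is minimal while the complementary rank one subgroup $T$ has positive mean dimension; this is exactly the data requested, with $f$ the jointly expansive minimal automorphism of $(X,T)$. Before building anything I would record two constraints that pin down the shape of the construction. First, $X$ must be infinite dimensional, since a $\mathbb{Z}$-action on a finite dimensional space has zero mean dimension. Second, $T$ must be \emph{aperiodic}: for every $p\geq 1$ the set $\mathrm{Fix}(T^{p})$ is closed and, because $f$ commutes with $T$, it is $f$-invariant, so minimality of $f$ forces it to be empty. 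In particular one cannot use the full shift on $(\mathbb{T}^{r})^{\mathbb{Z}}$, whose fixed point set is a positive dimensional torus obstructing minimal automorphisms exactly as in Example \ref{example: jointly expansive automorphism} (2). The naive algebraic/affine attempt -- twisting the expansive algebraic action of Example \ref{example: expansive algebraic action} by translations, which leaves expansiveness and $\mdim$ unchanged because the metric is translation invariant -- is obstructed for the same reason: for the harmonic example $\sigma_{v}-I$ is onto, so no translation can make $T=\sigma_{v}(\cdot)+a$ fixed point free. This indicates that the example must be genuinely non-algebraic.

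For the construction itself I would fibre a bundle of tori carrying a fibrewise hyperbolic toral automorphism $h$ over a \emph{minimal} subshift, rather than over $\mathbb{Z}$ as in Example \ref{example: easy counter-example} (1) and Example \ref{example of zero mean dimension}. Concretely, fix a minimal subshift $(\Omega,g)\subset\{0,1\}^{\mathbb{Z}}$ in which the symbol $1$ has frequency $D\in(0,1)$, fix a hyperbolic $h:\mathbb{T}^{2}\to\mathbb{T}^{2}$, and build $X$ from pairs $(\omega,x)$ with $\omega\in\Omega$ and $x$ a $\mathbb{T}^{2}$-valued array whose continuous degrees of freedom live on the density $D$ set $\{\,m:\omega_{m}=1\,\}$. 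The hyperbolic map $h$ supplies the amplification needed for expansiveness, the minimal subshift supplies aperiodicity and the minimal ``skeleton'', and $D$ controls the mean dimension. Expansiveness of $\langle T,f\rangle$ is then checked exactly as for Example \ref{example: easy counter-example} (1): configurations differing in their $\{0,1\}$-part are separated by the finite-alphabet shift, while configurations differing in a fibre value are separated by iterating $h$ after re-centring. The positive (and finite) value $\mdim(X,T)=2D$ should follow from the same $\widim_{\varepsilon}$ computation as in Example \ref{example of zero mean dimension}, using $\widim_{\varepsilon}(\mathbb{T}^{m},\ell^{\infty})=m$ (Lemma \ref{lemma: widim of M^n}), once the free coordinates are arranged to accumulate along $T$.

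The main obstacle is the minimality of $f$, and the difficulty is structural rather than computational. The amplification required for joint expansiveness cannot sit ``along $f$'': a hyperbolic cocycle over any base carries an invariant section (the graph solving the associated cohomological equation), and more generally, by Ma\~{n}\'{e}'s Theorem \ref{theorem: Mane2}, $f$ cannot be expansive on any positive dimensional factor. Hence all the hyperbolic amplification must be placed in $T$ (and in the combinations $T^{i}f^{j}$), while $f$ moves the minimal skeleton $g$ and acts on the fibres without expansion. The three demands therefore pull against one another: minimality of $f$ forbids both hyperbolic behaviour and invariant substructure along $f$, positive mean dimension requires continuous fibre freedom, and expansiveness requires hyperbolic amplification. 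Reconciling them is the crux: one must design $X$ so that the only closed $f$-invariant subset is $X$ itself, in particular ruling out the invariant sub-configurations (such as arrays valued in an $h$-invariant set, or any invariant section) that defeat every naive version of the bundle, all while keeping $\mdim(X,T)>0$ and $\langle T,f\rangle$ expansive. I expect this minimality verification to be the hard step of the proof.
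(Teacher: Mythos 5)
Your preparatory observations are sound (aperiodicity of $T$, the impossibility of the full shift or of affine twists of algebraic examples, the need for an infinite dimensional $X$), and the general shape you propose---subsets of a product of tori, hyperbolic fibre amplification, mean dimension controlled via $\widim_\varepsilon(M^n,\ell^\infty)=2n$ (Lemma \ref{lemma: widim of M^n}) by the density of free torus coordinates---is indeed the shape of the paper's construction in \S \ref{Proof of Proposition counter-example of multidimensional Mane2}. But the proposal has a genuine gap, and you name it yourself: minimality of $f$ is never established, and the specific architecture you propose (a skew product whose free coordinates are marked by a minimal $\{0,1\}$-subshift) cannot deliver it. The set of configurations whose free fibre values all lie in $\mathrm{Fix}(h)$---or in any proper closed $h$-invariant subset of $\mathbb{T}^2$---is closed, proper, and invariant under both the shift and the fibrewise action, so minimality fails for every version of your bundle: a minimal skeleton forces recurrence of the $0$--$1$ pattern but nothing forces the continuum of fibre values to recur. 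There is also an imprecision in the role assignment: if $T$ is the fibrewise hyperbolic map alone, then $\mdim(X,T)=0$ (in the weighted product metric only $O(\log(1/\varepsilon))$ coordinates are visible at scale $\varepsilon$, each contributing boundedly; cf.\ Remark \ref{remark: how to modify the construction} (3), where $h_{\mathbb{Z}}$ is observed to be zero mean dimensional on the paper's $X$). To get $\mdim(X,T)>0$ the positive mean dimension direction must involve the shift; the paper takes $f=\sigma$ and $T=\sigma\circ h_{\mathbb{Z}}$, so that the $\mathbb{Z}^2$-action generated by $(T,f)$ is the expansive one generated by $(\sigma,h_{\mathbb{Z}})$.

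The missing idea, which resolves exactly the tension you identify as the crux, is to exploit the density of \emph{periodic points} of the hyperbolic automorphism in a hierarchical construction modeled on Lindenstrauss--Weiss \cite[Proposition 3.5]{Lindenstrauss--Weiss}: at stage $n$ one chooses a finite, $h_{3^{n+1}L_0\cdots L_n}$-invariant set $B_n$ of periodic blocks that is $(1/n)$-dense in $A_n^3$, and defines $A_{n+1}$ by forcing every member of $B_n$ to occur at prescribed positions inside each block of the next length scale. Finiteness and $h$-invariance of $B_n$ keep $A_{n+1}$ closed and invariant under the full $\mathbb{Z}^2$-action (the union $\bigcup_m h^m(C_n)$ is a finite union) and keep periodic points dense, so the induction continues; minimality of $f=\sigma$ on $X=\bigcap_n X(A_n)$ then follows (Claim \ref{claim: minimality}) because every configuration syndetically contains blocks approximating, to within $1/n$, every block of every other configuration---the forced periodic blocks serve as a finite $\varepsilon$-net for the continuum of fibre values, which is precisely what your minimal skeleton cannot provide. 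Meanwhile the free variables occupy positions $I_n$ of positive density, giving $\mdim(X,\sigma\circ h_{\mathbb{Z}})>0$ by Lemma \ref{lemma: widim of M^n} (Claim \ref{claim: positive mean dimension}). Without this mechanism your plan stalls at the step you flagged, so the proposal, while correctly oriented, does not constitute a proof.
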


\begin{proposition} \label{proposition: second counter-example of multidimensional Mane2}
There exists a minimal and expansive $\mathbb{Z}^2$-action  with the property that for every line $L \subset \mathbb{R}^2$ the directional mean dimension of the action with respect to $L$ is
positive.
\end{proposition}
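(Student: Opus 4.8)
The plan is to build one $\mathbb{Z}^2$-action that is simultaneously minimal, expansive, and \emph{directionally isotropic}, in the sense that the information it carries is spread uniformly over all directions rather than concentrated in one. Two of the three features are already available separately in the examples above: the expansive algebraic action of Example~\ref{example: expansive algebraic action} is isotropic (its directional mean dimension is positive along every rational line) but, being a compact group action, has the identity as a fixed point and so is not minimal; conversely the action underlying Proposition~\ref{proposition: counter-example of multidimensional Mane2} is minimal and expansive but carries its mean dimension in a single direction. The task is therefore to reproduce the isotropic, expansive ``information density'' of Example~\ref{example: expansive algebraic action} inside a minimal system, and then to upgrade positivity from rational lines to all lines.

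For the construction I would keep both generators of the $\mathbb{Z}^2$-action shift-like (this is what allows both coordinate directions, and hence all directions, to carry mean dimension) and supply expansiveness through a local algebraic coupling of the torus-valued symbols as in Example~\ref{example: expansive algebraic action}, rather than through a dedicated hyperbolic generator of the Example~\ref{example: easy counter-example} type (such a generator would have zero mean dimension and destroy isotropy). Minimality I would impose by a Lindenstrauss--Weiss / Toeplitz skeleton (compare \cite[Proposition 3.5]{Lindenstrauss--Weiss}): fix a minimal finite-alphabet $\mathbb{Z}^2$-subshift whose set $W\subset\mathbb{Z}^2$ of ``free'' positions is syndetic (bounded gaps in every direction, hence of positive density), insert $\mathbb{T}$-valued data only at the positions of $W$ subject to the expansive coupling, fill the remaining positions deterministically from the skeleton, and take the orbit closure $X$. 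The skeleton forces every orbit to be dense, the coupling forces expansiveness on the orbit closure, and the syndeticity of $W$ is what will force isotropy.

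To verify that the directional mean dimension is positive along an arbitrary line $L$ with direction $\vec{v}$, I would estimate from below using only the free positions lying in a bounded neighbourhood of $L$. Since $W$ is syndetic with gap constant $M$, a segment of $L$ of length $n$ has, in its $M$-neighbourhood, at least $n/(2M)$ distinct points of $W$; although the algebraic coupling imposes relations among the corresponding circle coordinates, it leaves a positive proportion $\rho$ of them free (exactly as in Example~\ref{example: expansive algebraic action}, where the directional mean dimension is positive). Using $\widim_\varepsilon(\mathbb{T}^m,\ell^\infty)=m$ for small $\varepsilon$ (Lemma~\ref{lemma: widim of M^n}), the width-$\varepsilon$ dimension of the projection to this tube grows at least like $\rho n/(2M)$, so dividing by $n$ gives a lower bound $c:=\rho/(2M)>0$ for the directional mean dimension. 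Crucially $c$ depends only on $M$ and $\rho$ and not on $\vec{v}$, so the bound is uniform and applies to every line, whether its slope is rational or irrational.

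The main obstacle is the simultaneous reconciliation of the three requirements in the construction. Minimality pushes the configuration toward rigid almost-periodicity with few free coordinates, positive mean dimension demands many genuinely free coordinates, and expansiveness demands a coupling strong enough to separate nearby points; one must engineer the algebraic coupling on the syndetic free set so that it achieves expansiveness without either reducing the inserted degrees of freedom below a positive density or breaking the almost $1$-$1$ structure responsible for minimality. The sharpest point is positivity in \emph{irrational} directions: there is no rank-one subgroup to restrict to, so Corollary~\ref{corollary of first main theorem} gives no information, and positivity has to be read off directly from the tube estimate above. This is exactly why $W$ must be genuinely syndetic rather than merely of positive density: syndeticity guarantees that no line, of any slope, can asymptotically avoid the free positions.
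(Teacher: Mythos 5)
There is a genuine gap, and it sits exactly at the point you flag as ``the main obstacle'': the hybrid object you need --- a Ledrappier/Schmidt-type expansive algebraic coupling imposed on top of a Toeplitz skeleton whose free positions $W\subset\mathbb{Z}^2$ are syndetic --- is never constructed, and there are structural reasons it cannot exist as described. The relation of Example~\ref{example: expansive algebraic action}, $3x_{m,n}+x_{m+1,n}+x_{m,n+1}=0$, determines row $n+1$ of a configuration from row $n$; consequently any configuration in such a system carries essentially \emph{one-dimensional} free data (one row, up to finite ambiguity going downward). This is precisely why the full $\mathbb{Z}^2$ mean dimension of that example is zero while its directional mean dimensions are positive. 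It follows that no two-dimensionally syndetic set $W$ can consist of free coordinates: the values at $W$-positions on row $n+1$ are already forced by row $n$. Conversely, if you prescribe deterministic skeleton values on the complement of $W$ (and for minimality that complement must itself be relatively dense), the global relation overdetermines the configuration: a generic assignment of skeleton values admits no compatible completion at all, so the resulting closed invariant set may well be empty or degenerate, and in any case you have no lower bound on how many coordinates in a tube remain free. The step ``the coupling leaves a positive proportion $\rho$ of them free, exactly as in Example~\ref{example: expansive algebraic action}'' is therefore an assertion by analogy with a system that has no skeleton constraint; in the constrained system it is exactly the claim that needs proof. Weakening the coupling so that it only relates $W$-positions does not repair this, because the expansiveness of algebraic actions of this kind is a global spectral/algebraic fact about the defining polynomial, and an ad hoc relation that skips over skeleton positions has no reason to remain expansive.

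It is also worth noting that your reason for rejecting the hyperbolic-generator mechanism --- that a generator of the form $h_\mathbb{Z}$ has zero mean dimension and hence ``destroys isotropy'' --- is a correct observation about a \emph{single} system (this is exactly Remark~\ref{remark: how to modify the construction}~(3)), but it does not rule out that route, and in fact the paper's proof goes through it. The paper takes two minimal systems $X,X'\subset M^{\mathbb{Z}}$ of the type built for Proposition~\ref{proposition: counter-example of multidimensional Mane2} and puts on $X\times X'$ the $\mathbb{Z}^2$-action generated by $T_1=(\sigma,\sigma)$ and $T_2=(h_\mathbb{Z},\sigma\circ h_\mathbb{Z})$: the second factor is \emph{sheared}, so for every line $L$ at least one of the two factors has a shift component growing linearly along $L$, which yields a positive directional lower bound via the same $\widim_\varepsilon$ embedding trick you invoke. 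The price --- and the actual mathematical content of \S\ref{subsection: proof of Proposition second counter-example of Mane2} --- is proving minimality of the product, which forces a redesigned block construction (the periodic words $y^{(i)}$, $z^{(i)}$ repeated with carefully chosen periods $a_n$) so that patterns in the two factors can be synchronized by a single group element. Your proposal would need an analogous concrete mechanism reconciling expansiveness, minimality, and free density; as written, it names the tension but does not resolve it.
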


\textit{Directional mean dimension} is a mean dimension analogue of \textit{directional entropy}
(Milnor \cite{Milnor} and Boyle--Lind \cite{Boyle--Lind}) that was suggested recently by Lind. It counts the averaged dimension of $(X,T)$ along the $L$ direction.
We will define it in \S \ref{subsection: directional mean dimension}.
Propositions \ref{proposition: counter-example of multidimensional Mane2} and \ref{proposition: second counter-example of multidimensional Mane2} both show that a for $k >1$,  a rank $(k-1)$-subaction of a minimal and expansive $\mathbb{Z}^k$-action  need not have 
zero mean dimension, in contrast to the case $k=1$. Furthermore, proposition   \ref{proposition: counter-example of multidimensional Mane2} shows that this can happen even when  an single element of $\mathbb{Z}^2$  acts minimally, and proposition 
\ref{proposition: second counter-example of multidimensional Mane2} shows in particular that a minimal and expansive $\mathbb{Z}^2$-action  can have positive mean dimension for \emph{every} element of  $\mathbb{Z}^2$.

We will prove proposition \ref{proposition: counter-example of multidimensional Mane2} in \S
\ref{section: proof of Proposition counter example} and proposition \ref{proposition: second counter-example of multidimensional Mane2} in \S \ref{section: directional mean dimension and multidimensional Mane2}.
The question remains:

\begin{problem}
   Is there a reasonable generalization of Theorem \ref{theorem: Mane2} to $\mathbb{Z}^k$-actions?
\end{problem}

\subsection{Noncommutative versions} \label{subsection: noncommutative versions}

We can consider generalizations
of \S \ref{subsection: main results} and \S \ref{subsection: automorphisms}
 to noncommutative group actions.
Let $(X,d)$ be a compact metric space.

\begin{description}
   \item[Polynomial growth groups]
   Let $G$ and $H$ be finitely generated groups of polynomial growth.
   We denote by $\deg(G)$ and $\deg(H)$ the degrees of the polynomial growth of $G$ and $H$ respectively
   (e.g. $\deg(\mathbb{Z}^k) = k$).

   \begin{theorem} \label{theorem: polynomial growth}
    Let $T:G\times X\to X$ and $R:H\times X\to X$ be continuous actions which commute with each other.
   Suppose $T$ is expansive, namely there exists $c>0$ such that any distinct $x,y\in X$ satisfy 
   $\sup_{g\in G} d(T^g x, T^g y) > c$.
     \begin{enumerate} 
       \item Suppose $\deg(G) = \deg(H)+1$. Then:
           \begin{enumerate}
               \item   The mean dimension $\mdim(X,R)$ is finite.
               \item If the topological entropy of $T$ is zero then $\mdim(X,R)=0$.
           \end{enumerate} 
      \item  Suppose $\deg(G)= \deg(H)$. Then:
         \begin{enumerate}
             \item  The topological entropy of $R$ is finite.    
             \item If the topological entropy of $T$ is zero then the topological entropy of $R$ is also zero.
          \end{enumerate}   
      \item Suppose $\deg(G) < \deg(H)$. Then the topological entropy of $R$ is zero.
     \end{enumerate}   
   \end{theorem}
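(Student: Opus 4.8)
The plan is to pass to the \emph{combined action}. Since $T^{v}$ and $R^{u}$ commute, the rule $S^{(g,u)} := T^{g}R^{u}$ defines a continuous action $S$ of the product group $G\times H$, which again has polynomial growth with $\deg(G\times H)=\deg(G)+\deg(H)$. Because $T=S|_{G\times\{e\}}$ is expansive, so is $S$, and consequently both $T$ and $S$ have finite topological entropy. All three parts will then follow from one mechanism: expansiveness of $T$ lets one \emph{reconstruct} the $R$-orbit over an $H$-window from the $T$-data over a $G$-window, and the relative growth of these two windows is governed by $\deg(G)-\deg(H)$. Concretely, I would fix word-metric balls $B_{G}(r)$ and $B_{H}(n)$ as Følner sequences, so that by Gromov's theorem $|B_{G}(r)|\asymp r^{\deg G}$ and $|B_{H}(n)|\asymp n^{\deg H}$, both being genuine Følner sets.

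Writing $d^{R}_{\Psi}(x,y)=\max_{u\in\Psi}d(R^{u}x,R^{u}y)$ and $d^{T}_{\Phi}(x,y)=\max_{g\in\Phi}d(T^{g}x,T^{g}y)$, the expansive generator property supplies, for each small $\varepsilon$, a radius $N(\varepsilon)$ such that agreement of $T$-names on $B_{G}(N(\varepsilon))$ forces $d<\varepsilon$. Combining this with the uniform continuity of the finitely many generators of $R$ and with the commutation $T^{g}R^{u}=R^{u}T^{g}$, a composition (\emph{light-cone}) estimate shows that $R^{u}x$ is determined up to $\varepsilon$ by the $T$-name of $x$ on a ball $B_{G}(N(\varepsilon)+K|u|_{H})$, where $K$ bounds the propagation window of a single $R$-generator. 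Taking the union over $u\in\Psi_{n}:=B_{H}(n)$, the full segment $(R^{u}x)_{u\in\Psi_{n}}$ is determined, up to $\varepsilon$, by the $T$-name of $x$ on the single ball $\Phi_{n}:=B_{G}(N(\varepsilon)+Kn)$, of volume $\asymp n^{\deg G}$. This yields both a counting inequality $\#_{\varepsilon}(X,d^{R}_{\Psi_{n}})\le \#_{\delta}(X,d^{T}_{\Phi_{n}})$ and a width inequality $\widim_{\varepsilon}(X,d^{R}_{\Psi_{n}})\le \widim_{\delta}(X,d^{T}_{\Phi_{n}})$, with $\delta=\delta(\varepsilon)<c$.

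For parts (2) and (3) I would use the counting side. Finiteness of $h_{\mathrm{top}}(T)$ gives $\log\#_{\delta}(X,d^{T}_{\Phi_{n}})\le (h_{\mathrm{top}}(T)+\gamma)|\Phi_{n}|$ for large $n$, so the $\varepsilon$-entropy rate $S(X,R,\varepsilon)=\lim_{n}|B_{H}(n)|^{-1}\log\#_{\varepsilon}(X,d^{R}_{\Psi_{n}})$ obeys $S(X,R,\varepsilon)\le (h_{\mathrm{top}}(T)+\gamma)\lim_{n}|\Phi_{n}|/|B_{H}(n)|$, and $|\Phi_{n}|/|B_{H}(n)|\asymp n^{\deg G-\deg H}$. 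If $\deg G<\deg H$ the exponent is negative, the limit is $0$ for every $\varepsilon$, hence $h_{\mathrm{top}}(R)=\lim_{\varepsilon\to0}S(X,R,\varepsilon)=0$, giving (3). If $\deg G=\deg H$ the exponent is $0$, the bound is finite and independent of $\varepsilon$, giving $h_{\mathrm{top}}(R)<\infty$; and when $h_{\mathrm{top}}(T)=0$ the bound is $0$, giving $h_{\mathrm{top}}(R)=0$. For part (1) the volume ratio is $n^{\deg G-\deg H}=n$, so the crude counting bound diverges and one must estimate width instead. Using $\mdim(X,R)=\lim_{\varepsilon\to0}\lim_{n}|B_{H}(n)|^{-1}\widim_{\varepsilon}(X,d^{R}_{\Psi_{n}})$ and the width inequality, everything reduces to a \emph{surface-growth} estimate for $T$: for all $\delta<c$, $\widim_{\delta}(X,d^{T}_{B_{G}(r)})\le C\,r^{\deg G-1}$ with $C$ independent of $\delta$ and $r$. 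Granting this, $\widim_{\varepsilon}(X,d^{R}_{\Psi_{n}})\le C(N(\varepsilon)+Kn)^{\deg G-1}$; dividing by $|B_{H}(n)|\asymp n^{\deg G-1}$ gives a bound uniform in $n$ and $\varepsilon$, so $\mdim(X,R)<\infty$. The refinement $h_{\mathrm{top}}(T)=0\Rightarrow\mdim(X,R)=0$ comes from the Lindenstrauss--Weiss metric-mean-dimension machinery, which upgrades the surface estimate to $\widim_{\delta}(X,d^{T}_{B_{G}(r)})=o(r^{\deg G-1})$ when the entropy vanishes; this is the same input that yields the generalization of Fathi's theorem.

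The hard part will be the surface-growth width bound $\widim_{\delta}(X,d^{T}_{B_{G}(r)})\le C\,r^{\deg G-1}$ \emph{uniformly} in the resolution $\delta<c$: for $\deg G=1$ (with $R$ trivial) it is precisely Ma\~{n}\'{e}'s finite-dimensionality theorem, and its uniformity in scale is exactly what makes the $\varepsilon\to0$ limit converge. Establishing it for a general polynomial-growth group $G$ requires adapting Ma\~{n}\'{e}'s combinatorial dimension argument to word-metric balls, using Gromov's theorem to control $|B_{G}(r)|$ and the Følner property of those balls. A secondary technical point is making the light-cone lemma uniform over all $u\in B_{H}(n)$ with a single propagation radius linear in $|u|_{H}$, which rests on there being finitely many $R$-generators, each with a finite propagation window.
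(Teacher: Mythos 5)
Your treatment of parts (2), (3), and (1a) follows essentially the route the paper intends (the paper only sketches this theorem, asserting it is a straightforward generalization of its $\mathbb{Z}^k$ arguments): your light-cone estimate is the noncommutative version of Lemma \ref{lem: Coding commuting action}, and combining it with Lemma \ref{lemma: long time boundedness}, the counting comparison, and the volume ratio $|B_G(N(\varepsilon)+Kn)|/|B_H(n)| \asymp n^{\deg G - \deg H}$ gives (2) and (3) correctly, while the width comparison plus a Ma\~{n}\'{e}-type surface bound gives (1a). The genuine gap is part (1b). You claim that when $h_{\mathrm{top}}(T)=0$ the ``Lindenstrauss--Weiss metric-mean-dimension machinery'' upgrades the surface bound to $\widim_\delta\left(X,d^T_{B_G(r)}\right) = o\left(r^{\deg G-1}\right)$. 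Theorem \ref{theorem: Lindenstrauss--Weiss} only gives $\mdim \leq \underline{\mdim} \leq \overline{\mdim}$: it passes from width-type quantities \emph{up} to counting-type quantities, and never produces width upper bounds for $T$ over growing windows from entropy hypotheses; no such upgrade is known to follow from zero entropy without a substantial new argument. (The one natural route to it in the $\mathbb{Z}^k$ case --- decomposing $\partial[-N,N]^k$ into faces and using that rank-$(k-1)$ subactions have zero mean dimension --- presupposes an instance of statement (1b) itself, hence is circular, and has no analogue for general $G$.) The mechanism that actually proves (1b), and is what the paper means by generalizing the proof of Theorem \ref{second main theorem}, is different: use Frink's metrization theorem (Theorem \ref{theorem: Frink}) to replace $d$ by an adapted metric $D$ in which expansiveness becomes exponential contraction of scales (Lemma \ref{lemma: property of D}); then the coding lemma turns into the counting inequality (\ref{eq: bound on R by T}), comparing $R$-covers at scale $\alpha^{-n}$ with $T$-covers at the \emph{fixed} scale $1/(4\alpha)$, and dividing by $\log(1/\varepsilon)\approx n\log\alpha$ yields $\overline{\mdim}(X,R,D) \leq C\, h_{\mathrm{top}}(T)/\log\alpha$. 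Only at this last step does Lindenstrauss--Weiss enter, via $\mdim(X,R)\leq \overline{\mdim}(X,R,D)$. Note that this single estimate proves (1a) and (1b) simultaneously (expansiveness forces $h_{\mathrm{top}}(T)<\infty$), uses only ball volumes, and bypasses your ``hard part'' entirely.

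A secondary problem concerns that hard part itself. Your surface bound $\widim_\delta\left(X,d^T_{B_G(r)}\right)\leq C r^{\deg G-1}$ for word balls requires, in Ma\~{n}\'{e}'s argument, that the boundary of the window satisfies $\left|B_G(r)\setminus B_G(r-1)\right| = O\left(r^{\deg G-1}\right)$, because the width is bounded by a constant times the cardinality of the boundary of the window (this is how Proposition \ref{main proposition} works). For a general group of polynomial growth this sphere estimate does not follow from Gromov's theorem or from the F\o lner property of balls: it amounts to an error term of order $r^{\deg G-1}$ in Pansu's volume asymptotics, which is not known in general. The step is repairable --- since $\sum_{s\leq 2r}\left|B_G(s)\setminus B_G(s-1)\right| = O\left(r^{\deg G}\right)$, a pigeonhole argument produces radii $s\in[r,2r]$ with small spheres, and $\widim_\delta\left(X,d^T_{B_G(r)}\right)$ is monotone in $r$, so good radii suffice --- but as written the estimate you rely on would not be available for all $r$; alternatively, the Frink-metric argument above avoids boundaries and spheres altogether.
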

   
   The case (1) is the most nontrivial case with respect to the viewpoint of mean dimension theory.
   The mean dimension of $R$ is zero in the cases 
   (2) and (3) because finite topological entropy systems are zero mean dimensional.
   Indeed, as we saw at the end of Example \ref{example of zero mean dimension}, 
   finiteness of topological entropy is a strictly stronger condition than zero mean dimensionality.
   
   \begin{remark}
   The case (b) of (2) and the case (3) above were already proved by Shereshevsky \cite{Shereshevsky}.
   \end{remark}

   \item[Amenable groups]
    Amenable groups may have exponential growth. So we cannot apply the framework of Theorem \ref{theorem: polynomial growth}
    to general amenable groups.
    However the formulation in \S \ref{subsection: automorphisms} using automorphisms can be naturally generalized to
    amenable group actions.
     Let $G$ be a finitely generated amenable group and
    $T:G \times X\to X$ a continuous action.
     A homeomorphism $f:X\to X$ is called an automorphism of $(X,T)$ if it commutes with
       the $T$-action.
        An automorphism $f$ is said to be jointly expansive if there exists $c>0$ satisfying
        $\sup_{n\in \mathbb{Z}, g\in G} d(f^n T^g x, f^n T^g y) > c$ for any two distinct $x,y\in X$.

        \begin{theorem}  \label{theorem: amenable group case}
            Suppose a $G$-action $(X,T)$ admits a jointly expansive automorphism $f$. Then:
          \begin{enumerate}
            \item The mean dimension $\mdim(X,T)$ is finite.
             \item If the topological entropy of the $G \times \mathbb{Z}$-action generated by $T$ and $f$ is zero, then
          the mean dimension $\mdim(X,T)$ is zero.
           \end{enumerate}
        \end{theorem}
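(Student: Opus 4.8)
The plan is to follow the metric--mean-dimension route used in the second proof of Theorem~\ref{Theorem: Mean dimension of co-rank 1 commuting action} and in the proof of Theorem~\ref{second main theorem}, replacing $\mathbb{Z}^{k}$ by the amenable group $G$ and $\mathbb{Z}^{k+1}$ by $G\times\mathbb{Z}$. Write $S$ for the expansive $G\times\mathbb{Z}$-action generated by $T$ and $f$, so that $S^{(g,n)}=f^{n}\circ T^{g}$, and let $c>0$ be an expansive constant for $S$. For a compatible metric $\rho$ on $X$ and a finite $F\subset G$ put $\rho^{T}_{F}(x,y)=\max_{g\in F}\rho(T^{g}x,T^{g}y)$, and let $\#(\rho,\delta)$ denote the minimal cardinality of a $\delta$-cover of $(X,\rho)$. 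By the inequality of Lindenstrauss--Weiss \cite{Lindenstrauss--Weiss}, $\mdim(X,T)\le\overline{\mdim}_{M}(X,D,T)$ for \emph{every} compatible metric $D$, and the right-hand side equals $\limsup_{\varepsilon\to0}S_{T}(\varepsilon)/\log(1/\varepsilon)$ where $S_{T}(\varepsilon)=\lim_{F}|F|^{-1}\log\#(D^{T}_{F},\varepsilon)$ is the scale-$\varepsilon$ entropy of $T$ (the F\o lner limit being taken over $G$). It therefore suffices to produce one metric $D$ for which $S_{T}(\varepsilon)=O(\log(1/\varepsilon))$, and $=o(\log(1/\varepsilon))$ under the zero-entropy hypothesis.

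For this I would take $D$ to be a \emph{Fathi-type adapted metric} for the expansive action $S$. The point of such a metric is that $c$-resolution is achieved within a word ball whose radius is logarithmic in the reciprocal distance: there exist $\lambda>1$ and, for each small $\varepsilon$, a radius $\ell(\varepsilon)=O(\log(1/\varepsilon))$ such that if $D(S^{w}x,S^{w}y)\le c$ for all $w$ in the word ball of radius $\ell(\varepsilon)$ in $G\times\mathbb{Z}$, then $D(x,y)\le\varepsilon$. Constructing such a metric, and hence obtaining the logarithmic resolution rate, for an expansive action of a general finitely generated amenable group is the main obstacle: Fathi \cite{Fathi} did this for $\mathbb{Z}$, and one must extend it to $G\times\mathbb{Z}$. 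Since $G$ may have exponential growth, the $G$-part $K_{\varepsilon}$ of this word ball is large, but the essential points are that $K_{\varepsilon}$ is \emph{finite} for each fixed $\varepsilon$ and that the $\mathbb{Z}$-extent is only $[-\ell(\varepsilon),\ell(\varepsilon)]$ with $\ell(\varepsilon)=O(\log(1/\varepsilon))$.

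Granting the adapted metric, the expansive-generator argument runs as in the one-dimensional case. If $D^{T}_{F}(x,y)>\varepsilon$ then $D(T^{g_{0}}x,T^{g_{0}}y)>\varepsilon$ for some $g_{0}\in F$, and applying the resolution property to the pair $(T^{g_{0}}x,T^{g_{0}}y)$ produces $w=(g_{1},n_{1})$ in the radius-$\ell(\varepsilon)$ ball with $D(S^{(g_{1}g_{0},n_{1})}x,S^{(g_{1}g_{0},n_{1})}y)>c$; here $g_{1}g_{0}\in K_{\varepsilon}F$ and $|n_{1}|\le\ell(\varepsilon)$. Hence $\#(D^{T}_{F},\varepsilon)\le\#\bigl(D^{S}_{K_{\varepsilon}F\times[-\ell(\varepsilon),\ell(\varepsilon)]},c\bigr)$. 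Because $K_{\varepsilon}$ is a fixed finite set, the F\o lner limit over $G$ absorbs it ($|K_{\varepsilon}F|/|F|\to1$), and writing $D_{\ell}(x,y)=\max_{|n|\le\ell}D(f^{n}x,f^{n}y)$ one checks $D^{S}_{F'\times[-\ell,\ell]}=(D_{\ell})^{T}_{F'}$, so that $S_{T}(\varepsilon)\le\lim_{F}|F|^{-1}\log\#\bigl((D_{\ell(\varepsilon)})^{T}_{F},c\bigr)=:H(\ell(\varepsilon))$, the scale-$c$ entropy of the $G$-action $T$ measured in the ``slab'' metric $D_{\ell}$.

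It remains to control $H(\ell)$ as $\ell\to\infty$. The key observation is that $\tfrac{1}{2\ell+1}H(\ell)\to h_{\mathrm{top}}(S)$ as $\ell\to\infty$: the slabs $F\times[-\ell,\ell]$ become F\o lner in $G\times\mathbb{Z}$ once both $F$ and $\ell$ are large, so the normalized covering exponents converge to the scale-$c$ entropy of $S$, which equals $h_{\mathrm{top}}(S)$ because $c$ lies below the expansive constant. Since $S$ is expansive, $h_{\mathrm{top}}(S)<\infty$, whence $H(\ell)\le C\ell$ for some constant $C$; combined with $\ell(\varepsilon)=O(\log(1/\varepsilon))$ this gives $S_{T}(\varepsilon)=O(\log(1/\varepsilon))$ and therefore $\mdim(X,T)\le\overline{\mdim}_{M}(X,D,T)<\infty$, proving (1). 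If moreover $h_{\mathrm{top}}(S)=0$, the same convergence yields $H(\ell)=o(\ell)$, hence $S_{T}(\varepsilon)=o(\log(1/\varepsilon))$ and $\overline{\mdim}_{M}(X,D,T)=0$, proving (2). The only non-routine ingredient is the logarithmic resolution rate of the adapted metric in the second paragraph; everything else is the standard F\o lner/subadditivity bookkeeping for amenable group actions.
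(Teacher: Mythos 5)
Your proposal is correct and follows essentially the route the paper intends for this theorem: the paper omits the proof of Theorem \ref{theorem: amenable group case}, declaring it a straightforward generalization of the argument of \S\ref{section: second proof of first main theorem and the proof of second main theorem} (adapted metric via Frink, covering-number bound through the expansive action, Lindenstrauss--Weiss inequality), which is exactly the chain you carry out, with the F{\o}lner absorption of the finite word ball and the slab-entropy limit supplying the correct amenable bookkeeping. The one ingredient you defer as the ``main obstacle'' --- an adapted metric with logarithmic resolution rate for the expansive $G\times\mathbb{Z}$-action --- is in fact not an obstacle: the paper's construction in \S\ref{subsection: Frink's metrization theorem} and Lemmas \ref{lemma: properties of rho}--\ref{lemma: property of D} uses only the subadditivity of word length, compactness, and expansiveness, so it applies verbatim to any finitely generated group in place of $\mathbb{Z}^k$.
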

\end{description}

The proofs of Theorems \ref{theorem: polynomial growth} and \ref{theorem: amenable group case} 
are straightforward generalizations\footnote{The cases (2) and (3) in Theorem \ref{theorem: polynomial growth} 
did not appear in \S \ref{subsection: main results} (at least formally).
But indeed they naturally follow if we apply the argument of
\S \ref{section: second proof of first main theorem and the proof of second main theorem}     
(or, more simply, Lemmas \ref{lemma: long time boundedness} and \ref{lem: Coding commuting action}
in \S \ref{section: first proof of first main theorem})
to the settings of polynomial growth group actions.} of the proofs of
Theorems \ref{Theorem: Mean dimension of co-rank 1 commuting action} and \ref{second main theorem}.
But we have not so far found any interesting phenomena specific to the noncommutative case.
So the main body of the paper concentrates on the case of $\mathbb{Z}^k$-actions
and we omit the detailed explanations of the noncommutative case.
We believe that experienced readers will not find any difficulty to
extend the arguments of \S \ref{section: first proof of first main theorem}
and \S \ref{section: second proof of first main theorem and the proof of second main theorem}
to noncommutative group actions.

\vspace{0.2cm}

\textbf{Acknowledgment.}
The authors started the research of this paper when they attended the conference
``Mean dimension and sofic entropy meet dynamical systems, geometric analysis and
information theory'' at the Banff International Research Station (BIRS) for Mathematical Innovation and
Discovery.
The authors would like to thank BISR for their wonderful environment and hospitality.

\section{Mean dimension} \label{section: mean dimension}

Here we review basics of mean dimension.
Readers can find (much) more information in \cite{Gromov, Lindenstrauss--Weiss, Lindenstrauss}.

Let $(X,d)$ be a compact metric space.
Let $\mathcal{U} = \{U_i\}_{i\in I}$ be an open cover of $X$.
We define $\mathrm{mesh}(\mathcal{U},d)$ as the supremum of $\diam(U_i)$ over $U_i\in \mathcal{U}$.
We define the \textbf{order} $\mathrm{ord}(\mathcal{U})$ as the maximum integer $n\geq 0$ such that
there exist pairwise distinct $i_0,i_1,\dots,i_n\in I$ satisfying
$U_{i_0}\cap U_{i_1}\cap \dots \cap U_{i_n} \neq \emptyset$.
An open cover $\mathcal{V} = \{V_j\}_{j\in J}$ of $X$ is called a \textbf{refinement} of $\mathcal{U}$ if
for every $j\in J$ there exists $i\in I$ satisfying $V_j\subset U_i$.
We define the \textbf{degree} $\mathcal{D}(\mathcal{U})$ as the minimum of $\mathrm{ord}(\mathcal{V})$
over all refinements $\mathcal{V}$ of $\mathcal{U}$.
It is known \cite[Definition 1.6.7]{Engelking} that the topological dimension $\dim X$ is give by
the supremum of $\mathcal{D}(\mathcal{U})$ over all open covers $\mathcal{U}$ of $X$.

For two open covers $\mathcal{U} = \{U_i\}_{i\in I}$ and $\mathcal{V} = \{V_j\}_{j\in J}$ of $X$
we define a new open cover $\mathcal{U}\vee \mathcal{V}$ by
\[ \mathcal{U}\vee \mathcal{V} = \{U_i\cap V_j|\, i\in I, j\in J\}. \]
We can check that \cite[Corollary 2.5]{Lindenstrauss--Weiss}
\begin{equation}  \label{eq: subadditivity of degree}
    \mathcal{D}(\mathcal{U}\vee \mathcal{V}) \leq \mathcal{D}(\mathcal{U}) + \mathcal{D}(\mathcal{V}).
\end{equation}

Suppose $\mathbb{Z}^k$ continuously acts on $X$ by $T:\mathbb{Z}^k\times X\to X$.
We define the \textbf{mean dimension} $\mdim(X,T)$ by
\begin{equation} \label{eq: definition of mean dimension}
    \mdim(X,T) = \sup_{\text{$\mathcal{U}$: open cover of $X$}}
   \left(\lim_{N\to \infty} \frac{\mathcal{D}\left(\bigvee_{u\in [-N,N]^k\cap \mathbb{Z}^k} T^{-u}\mathcal{U}\right)}{(2N+1)^k}\right).
\end{equation}
This limit exists because of the subadditivity (\ref{eq: subadditivity of degree}).
The mean dimension is a topological invariant of $(X,T)$.

The above formulation (\ref{eq: definition of mean dimension}) was introduced by
\cite{Lindenstrauss--Weiss}.
Another formulation (closer to the original definition of \cite{Gromov})
will be also useful later (\S \ref{section: first proof of first main theorem}):
For $\varepsilon >0$ we define the \textbf{$\varepsilon$-width dimension}
$\widim_\varepsilon (X,d)$ as the minimum of $\mathrm{ord}(\mathcal{U})$ over
all open covers $\mathcal{U}$ of $X$ satisfying $\mathrm{mesh}(\mathcal{U},d) \leq \varepsilon$.
Given an action $T:\mathbb{Z}^k \times X \to X$ and a subset $\Omega\subset \mathbb{R}^k$, 
the distance $d^T_\Omega$ on $X$ is defined by
\[ d^{T}_\Omega(x,y) = \sup_{u\in \Omega\cap \mathbb{Z}^k} d(T^u x, T^u y). \]
When there is no ambiguity about the action $T$ we will write $d_\Omega = d^T_\Omega$.

Then $\mdim(X,T)$ is given by
\begin{equation}  \label{eq: another definition of mean dimension}
   \mdim(X,T) = \lim_{\varepsilon\to 0}
   \left(\lim_{N\to \infty} \frac{\widim_\varepsilon\left(X,d^{T}_{[-N,N]^k}\right)}{(2N+1)^k}\right).
\end{equation}
The equivalence of (\ref{eq: definition of mean dimension}) and (\ref{eq: another definition of mean dimension})
easily follows from the consideration on the \textit{Lebesgue number}.
The above definitions are all we need in \S \ref{section: first proof of first main theorem}.
So readers may skip the rest of this section and directly go to \S \ref{section: first proof of first main theorem}.

Next we introduce \textit{metric mean dimension} \cite{Lindenstrauss--Weiss}.
This will be used in \S \ref{section: second proof of first main theorem and the proof of second main theorem}.
Let $\varepsilon >0$ and $(X,d)$ a compact metric space.
We define $\#(X,d,\varepsilon)$ as the minimum cardinality $|\mathcal{U}|$ of open covers $\mathcal{U}$ of $X$
satisfying $\mathrm{mesh}(\mathcal{U}, d) < \varepsilon$.
Let $T:\mathbb{Z}^k\times X\to X$ be a continuous action.
We define the \textbf{entropy $S(X,T,d,\varepsilon)$ at the scale $\varepsilon>0$} by
\begin{equation}  \label{eq: elementary bound on S}
  S(X,T,d,\varepsilon) := \lim_{N\to \infty}
     \frac{\log \#\left(X, d^{T}_{[-N,N]^k}, \varepsilon\right)}{(2N+1)^k}
     = \inf_{N\geq 1} \frac{\log \#\left(X,d^T_{[-N,N]^k},\varepsilon\right)}{(2N+1)^k}.
\end{equation}     
The second equality follows from a \textit{standard deviation argument}.
The topological entropy $h_{\mathrm{top}}(T)$ is given by
\begin{equation}  \label{eq: topological entropy}
   h_{\mathrm{top}}(T) = \lim_{\varepsilon \to 0} S(X,T,d,\varepsilon).
\end{equation}
We define the \textbf{upper/lower metric mean dimensions} $\overline{\mdim}(X,T,d)$ and
$\underline{\mdim}(X,T,d)$ by
\begin{equation*}
    \overline{\mdim}(X,T,d) = \limsup_{\varepsilon \to 0} \frac{S(X,T,d,\varepsilon)}{\log(1/\varepsilon)}, \quad
    \underline{\mdim}(X,T,d) = \liminf_{\varepsilon \to 0} \frac{S(X,T,d,\varepsilon)}{\log(1/\varepsilon)}.
\end{equation*}
The following is a fundamental theorem \cite[Theorem 4.2]{Lindenstrauss--Weiss}.

\begin{theorem}[Lindenstrauss--Weiss, 2000]  \label{theorem: Lindenstrauss--Weiss}
   \[   \mdim(X,T) \leq  \underline{\mdim}(X,T,d) \leq    \overline{\mdim}(X,T,d). \]
\end{theorem}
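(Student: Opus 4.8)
The second inequality $\underline{\mdim}(X,T,d)\le\overline{\mdim}(X,T,d)$ is immediate, since $\liminf\le\limsup$ for any function of $\varepsilon$. So the whole content is the first inequality $\mdim(X,T)\le\underline{\mdim}(X,T,d)$, and my plan is to derive it from a single scale-localized geometric estimate comparing width dimension with covering numbers, followed by a limit assembly that exploits the monotonicity of $\widim_\eta$ in the scale $\eta$.

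The geometric heart I would isolate is the following purely metric statement: there is a universal constant $C\ge1$ such that for every compact metric space $(Y,\rho)$ and all scales $0<\varepsilon\le\eta$,
\[ \#(Y,\rho,\varepsilon)\ \ge\ \left(\frac{\eta}{C\varepsilon}\right)^{\widim_\eta(Y,\rho)}. \]
Equivalently, $\widim_\eta(Y,\rho)\le \log\#(Y,\rho,\varepsilon)/\log(\eta/(C\varepsilon))$; this is a two-scale, quantitative form of the classical inequality ``topological dimension $\le$ upper box dimension''. I would prove it by taking a minimal cover by sets of diameter $<\varepsilon$ and showing, via a general-position/nerve refinement argument, that if the number of such sets were smaller than $(\eta/(C\varepsilon))^{j}$, then one could coarsen it to a cover of mesh $\le\eta$ and order $<j$, contradicting $\widim_\eta(Y,\rho)=j$.

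With the lemma in hand, I would apply it to $Y=(X,d^{T}_{[-N,N]^k})$. Writing $M(\eta)=\lim_{N\to\infty}\widim_\eta(X,d^{T}_{[-N,N]^k})/(2N+1)^k$, the function $M$ is non-decreasing as $\eta\downarrow0$ (a larger mesh is a weaker constraint), and $\mdim(X,T)=\lim_{\eta\to0}M(\eta)$. Fix $\delta>0$ and choose a small reference scale $\eta_0$ with $M(\eta_0)>\mdim(X,T)-\delta$; then for all large $N$ one has $\widim_{\eta_0}(X,d^{T}_{[-N,N]^k})>(\mdim(X,T)-\delta)(2N+1)^k$. Feeding this into the lemma at a fine scale $\varepsilon\le\eta_0/C$ gives
\[ \log\#\!\left(X,d^{T}_{[-N,N]^k},\varepsilon\right)\ \ge\ (\mdim(X,T)-\delta)(2N+1)^k\,\log\frac{\eta_0}{C\varepsilon}. \]
Dividing by $(2N+1)^k$ and letting $N\to\infty$ yields $S(X,T,d,\varepsilon)\ge(\mdim(X,T)-\delta)\log(\eta_0/(C\varepsilon))$; dividing by $\log(1/\varepsilon)$ and letting $\varepsilon\to0$ (so that the constants $\eta_0,C$ are washed out) gives $\underline{\mdim}(X,T,d)\ge\mdim(X,T)-\delta$, and finally $\delta\to0$. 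The case $\mdim(X,T)=\infty$ is identical, replacing $\mdim(X,T)-\delta$ by an arbitrarily large $R$.

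The main obstacle is the geometric lemma, and specifically getting the exponent to be exactly $\widim_\eta$ with only a universal multiplicative constant inside the base. A naive telescoping across a geometric sequence of scales loses a factor in the exponent (edge effects make the per-halving increase of $\log\#$ strictly smaller than $\widim_\eta$), and an exponent loss would only prove $\underline{\mdim}\ge c\,\mdim$ with $c<1$; an additive constant inside the logarithm, by contrast, is harmless because it disappears after dividing by $\log(1/\varepsilon)$. Hence the estimate must be proved in one shot at the two scales $\varepsilon$ and $\eta$, with a constant $C$ independent of the space $Y$ so that it survives the $N\to\infty$ limit.
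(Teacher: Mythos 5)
First, a point of reference: the paper never proves Theorem \ref{theorem: Lindenstrauss--Weiss}; it quotes it from Lindenstrauss--Weiss \cite[Theorem 4.2]{Lindenstrauss--Weiss}, so your attempt can only be measured against the original argument there. Your ``limit assembly'' is correct: granting your two-scale lemma, the deduction of $\mdim(X,T)\leq \underline{\mdim}(X,T,d)$ is sound, and your two structural observations are exactly right --- the constant $C$ must be independent of the space (hence of $N$), and a naive telescoping over dyadic scales does not give the exponent $\widim_\eta$. The second inequality is indeed trivial. The problem is that the entire mathematical content of the theorem is concentrated in the lemma
\[
\#\left(Y,\rho,\varepsilon\right) \geq \left(\frac{\eta}{C\varepsilon}\right)^{\widim_\eta(Y,\rho)},
\qquad 0<\varepsilon\leq \eta,
\]
and you do not prove it: what you offer is one sentence (a ``general-position/nerve refinement argument'' that ``coarsens'' a minimal $\varepsilon$-cover). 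That sentence does not describe a workable proof. The nerve of a minimal cover by $n$ sets of diameter $<\varepsilon$ is a complex of dimension $n-1$, and there is no general-position projection of it onto a complex of dimension $\approx \log n/\log(\eta/\varepsilon)$ that keeps fibers of diameter $\leq\eta$; coarsening a cover \emph{while controlling its order} is precisely the hard part, not a routine refinement step.

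For the record, a lemma of this two-scale type is true (possibly with $\widim$ taken at a scale $\eta/4$ or so, which is harmless for your assembly), and an estimate of exactly this kind is the crux of the original Lindenstrauss--Weiss proof. But its proof is a genuine induction on the exponent $j$, in the spirit of a quantitative Szpilrajn inequality ($\dim\leq\dim_H$): one slices $Y$ by level sets of distance functions $f_i=\rho(\cdot,y_i)$; since the $n$ covering sets have images of length $<\varepsilon$, a counting argument produces a radius $a_i\in[\eta/4,\eta/2]$ whose ``wall'' $f_i^{-1}([a_i,a_i+\varepsilon])$ meets only $O(n\varepsilon/\eta)$ of the covering sets, so the induction hypothesis applies to each wall; one then assembles pairwise disjoint ``rooms'' of diameter $\leq\eta$ together with the order-$(j-1)$ covers of the walls. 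This slicing induction is where both the exponent $\widim_\eta$ and the universality of $C$ come from, and none of it is visible in your sketch. So: your architecture is the right one and matches the standard proof in outline, but as written the proposal has a genuine gap --- the geometric lemma is asserted rather than proved, and the mechanism you propose for it (general position on the nerve) would not succeed.
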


\section{First proof of Theorem \ref{Theorem: Mean dimension of co-rank 1 commuting action}} \label{section: first proof of first main theorem}

Here we prove Theorem \ref{Theorem: Mean dimension of co-rank 1 commuting action}
by adapting Ma\~{n}\'{e}'s method \cite[pp. 318-319]{Mane} to the settings of mean dimension theory.
Throughout this section we assume that $(X,d)$ is a compact metric space with an expansive
action $T:\mathbb{Z}^k\times X\to X$,
and that $R:\mathbb{Z}^{k-1} \times X \to X$ is another action that commutes with $T$.
We choose $c>0$ such that any two distinct points $x,y\in X$ satisfy
\begin{equation}  \label{eq: expansivity constant}
   \sup_{u\in \mathbb{Z}^k} d(T^u x, T^u y) > 2c.
\end{equation}

\begin{lemma}  \label{lemma: boundary and interior behaviors}
   There exists $\delta>0$ such that if $N \geq 1$ and $x,y\in X$ satisfy
   \[ c\leq d^T_{[-N,N]^k}(x,y) \leq 2c, \]
   then $d^T_{\partial [-N,N]^k}(x,y) > \delta$.
   Here $\partial [-N,N]^k$ is the boundary of $[-N,N]^k$, i.e. it is given by
   \[ \bigcup_{i=1}^k \left\{x\in [-N,N]^k|\, x_i \in \{-N,N\} \right\}. \]
\end{lemma}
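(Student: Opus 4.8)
The plan is to argue by contradiction, using compactness of $X$ together with the expansiveness constant $2c$ from \eqref{eq: expansivity constant}. Suppose no such $\delta$ exists; then for every $n\ge 1$ there are $N_n\ge 1$ and points $x_n,y_n\in X$ with $c\le d^T_{[-N_n,N_n]^k}(x_n,y_n)\le 2c$ but $d^T_{\partial[-N_n,N_n]^k}(x_n,y_n)\le 1/n$. Since $[-N_n,N_n]^k\cap\mathbb{Z}^k$ is finite, the supremum defining $d^T_{[-N_n,N_n]^k}(x_n,y_n)$ is attained at some lattice point $u_n$, so $d(T^{u_n}x_n,T^{u_n}y_n)\ge c$. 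The natural move is to recenter the dynamics at $u_n$, setting $\tilde x_n:=T^{u_n}x_n$ and $\tilde y_n:=T^{u_n}y_n$; then $d(\tilde x_n,\tilde y_n)\ge c$, while $d(T^v\tilde x_n,T^v\tilde y_n)\le 2c$ for every $v\in\mathbb{Z}^k$ with $v+u_n\in[-N_n,N_n]^k$, because $T^vT^{u_n}=T^{v+u_n}$. I would first note that, since $1/n<c$ for large $n$, the maximizing point $u_n$ cannot lie on the boundary, so $u_n$ is interior and its $\ell^\infty$-distance $r_n$ to $\partial[-N_n,N_n]^k$ satisfies $r_n\ge 1$.

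After passing to a subsequence the argument then splits according to the behaviour of $r_n$, and this dichotomy is the heart of the proof. If $r_n\to\infty$, then for each fixed $v$ we have $v+u_n\in[-N_n,N_n]^k$ for all large $n$, so $d(T^v\tilde x_n,T^v\tilde y_n)\le 2c$. Choosing a convergent subsequence $\tilde x_n\to\tilde x$, $\tilde y_n\to\tilde y$ and passing to the limit gives $\sup_{v\in\mathbb{Z}^k}d(T^v\tilde x,T^v\tilde y)\le 2c$ with $d(\tilde x,\tilde y)\ge c>0$, i.e. two distinct points that are never separated by more than $2c$, contradicting expansiveness \eqref{eq: expansivity constant}.

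The remaining case, in which (along a subsequence) $r_n$ stays bounded by some $R$, is the step I expect to be the main obstacle, since here recentering at $u_n$ does not open up a large box around the origin and so expansiveness cannot be invoked directly. To handle it I would choose, for each $n$, a nearest boundary lattice point $b_n$ to $u_n$ (obtained by sliding $u_n$ by $r_n$ steps to the nearest face), so that $w_n:=b_n-u_n$ is a nonzero integer vector with $|w_n|_\infty=r_n\le R$. By construction $d(T^{w_n}\tilde x_n,T^{w_n}\tilde y_n)=d(T^{b_n}x_n,T^{b_n}y_n)\le d^T_{\partial[-N_n,N_n]^k}(x_n,y_n)\le 1/n\to 0$. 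Since only finitely many integer vectors satisfy $|w|_\infty\le R$, I can pass to a further subsequence on which $w_n$ equals a fixed $w\neq 0$ and $\tilde x_n\to\tilde x$, $\tilde y_n\to\tilde y$. Then $d(\tilde x,\tilde y)\ge c$ while $d(T^w\tilde x,T^w\tilde y)=0$; as $T^w$ is a homeomorphism (with inverse $T^{-w}$) it is injective, forcing $\tilde x=\tilde y$, a contradiction.

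Both cases being impossible, the desired $\delta>0$ must exist. I want to stress that this dichotomy is what makes the proof work: the ``deep interior'' case is where the genuinely dynamical hypothesis (expansiveness) enters, whereas the ``near the boundary'' case is resolved by the purely algebraic fact that each $T^w$ is injective. It is worth observing that this scheme also covers the possibility that $N_n$ stays bounded, since then $r_n\le N_n$ is bounded and the second case applies verbatim, so no separate treatment of bounded $N_n$ is needed.
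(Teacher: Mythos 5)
Your proof is correct and is essentially the paper's argument: the paper likewise argues by contradiction, picks $a_n\in[-N_n,N_n]^k$ with $d(T^{a_n}x_n,T^{a_n}y_n)\geq c$, observes that the distance from $a_n$ to $\partial[-N_n,N_n]^k$ must tend to infinity, and extracts a limit pair $x\neq y$ with $\sup_{u\in\mathbb{Z}^k}d(T^ux,T^uy)\leq 2c$, contradicting (\ref{eq: expansivity constant}). Your bounded-$r_n$ case simply supplies the detail (injectivity of the finitely many maps $T^w$ with $|w|_\infty\leq R$) behind the step the paper asserts without proof, and along the way handles bounded $N_n$, so the two arguments coincide in substance.
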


\begin{proof}
Suppose the statement is false: There exist
$N_n\geq 1$ and $x_n, y_n\in X$ $(n\geq 1)$ satisfying
\[ c\leq d^T_{[-N_n,N_n]^k}(x_n,y_n) \leq 2c, \quad
    \lim_{n\to \infty} d^T_{\partial [-N_n,N_n]^k}(x_n,y_n) =0. \]
Then $N_n\to \infty$ as $n\to \infty$ and there exists $a_n\in [-N_n,N_n]^k$ satisfying
$d(T^{a_n}x_n, T^{a_n}y_n) \geq c$.
It follows from $\lim_{n\to \infty} d^T_{\partial [-N_n,N_n]^k}(x_n,y_n) =0$ that
the distance between $a_n$ and $\partial [-N_n,N_n]^k$ goes to infinity as $n\to \infty$.
We can assume that $T^{a_n} x_n\to x$ and $T^{a_n} y_n\to y$ by choosing subsequences (if necessary).
Then $d(x,y)\geq c$ and $\sup_{u\in \mathbb{Z}^k} d(T^u x, T^u y) \leq 2c$.
This contradicts (\ref{eq: expansivity constant}).
\end{proof}

\begin{lemma}  \label{lemma: long time boundedness}
For any $\varepsilon >0$ there exists $m=m(\varepsilon)>0$ such that if $x,y\in X$ satisfy
\[  d^T_{[-m,m]^k}(x,y) \leq 2c, \]
then $d(x,y)  < \varepsilon$.
\end{lemma}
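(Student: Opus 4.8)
The plan is to argue by contradiction using compactness, exactly in the spirit of the proof of Lemma \ref{lemma: boundary and interior behaviors}. Suppose the statement fails. Then there is a single $\varepsilon > 0$ for which no $m$ works; that is, for every $m \geq 1$ we can find points $x_m, y_m \in X$ with
\[ d^T_{[-m,m]^k}(x_m, y_m) \leq 2c \quad \text{but} \quad d(x_m, y_m) \geq \varepsilon. \]
The goal is to manufacture from these a pair of distinct points whose entire $T$-orbits stay within distance $2c$, which will contradict the expansivity inequality (\ref{eq: expansivity constant}).

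First I would pass to convergent subsequences. Since $X$ is compact, after extracting a subsequence we may assume $x_m \to x$ and $y_m \to y$ for some $x, y \in X$. The inequality $d(x_m, y_m) \geq \varepsilon$ survives the limit, so $d(x, y) \geq \varepsilon > 0$, and in particular $x \neq y$.

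Next I would show that $x$ and $y$ have uniformly close $T$-orbits. Fix any $u \in \mathbb{Z}^k$. As soon as $m$ is large enough that $u \in [-m,m]^k$, the defining supremum gives $d(T^u x_m, T^u y_m) \leq d^T_{[-m,m]^k}(x_m, y_m) \leq 2c$. Letting $m \to \infty$ along the subsequence and using continuity of the map $T^u$, we obtain $d(T^u x, T^u y) \leq 2c$. Since $u \in \mathbb{Z}^k$ was arbitrary, this yields $\sup_{u \in \mathbb{Z}^k} d(T^u x, T^u y) \leq 2c$, contradicting (\ref{eq: expansivity constant}) because $x \neq y$.

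This is a routine compactness-plus-continuity argument, so I do not expect a serious obstacle; the only point that requires care is the order of quantifiers in the final step. One must fix the index $u$ \emph{before} sending $m \to \infty$, so that $u$ eventually lies in the box $[-m,m]^k$ and the individual bound $d(T^u x_m, T^u y_m) \leq 2c$ becomes available; the uniform bound over all $u$ is then recovered only after passing to the limit for each fixed $u$ and taking the supremum afterwards, not by exchanging the supremum with the limit directly.
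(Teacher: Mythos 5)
Your proof is correct and follows essentially the same route as the paper's: a contradiction argument via compactness, extracting convergent subsequences and deducing $\sup_{u\in\mathbb{Z}^k} d(T^u x, T^u y)\leq 2c$ for the distinct limit points, contradicting (\ref{eq: expansivity constant}). The only difference is that you spell out the quantifier-order step (fixing $u$ before letting $m\to\infty$) that the paper leaves implicit, which is a sound elaboration rather than a different argument.
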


\begin{proof}
Suppose the statement is false:
There exist $\varepsilon >0$ and $x_n,y_n\in X$ $(n\geq 1)$ satisfying
$d^T_{[-n,n]^k}(x_n,y_n) \leq 2c$ and
$d(x_n,y_n) \geq \varepsilon$.
Choose subsequences $\{x_{n'}\}$ and $\{y_{n'}\}$ converging to some $x$ and $y$ respectively.
Then $\sup_{u\in \mathbb{Z}^k} d(T^u x, T^u y) \leq 2c$ and $d(x,y) \geq \varepsilon$,
which contradicts (\ref{eq: expansivity constant}).
\end{proof}

\begin{proposition}  \label{main proposition}
\[  \limsup_{N\to \infty} \frac{\widim_{2c}\left(X, d^T_{[-N,N]^k}\right)}{N^{k-1}} < \infty. \]
\end{proposition}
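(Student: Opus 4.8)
The plan is to follow Ma\~n\'e's strategy and construct, for each large $N$, an explicit open cover of $X$ whose mesh with respect to $d^T_{[-N,N]^k}$ is at most $2c$ and whose order grows no faster than $N^{k-1}$. Since $\widim_{2c}(X, d^T_{[-N,N]^k})$ is by definition the minimal order of such a cover, this immediately yields the claimed $\limsup$ bound. The guiding principle is that Lemma \ref{lemma: boundary and interior behaviors} lets the boundary $\partial[-N,N]^k$, which contains only $O(N^{k-1})$ lattice points, do all the work of resolving $X$ at scale $2c$, even though the full box carries $O(N^k)$ points.

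First I would fix, using compactness, a \emph{finite} open cover $\mathcal{P}$ of $X$ with $\mathrm{mesh}(\mathcal{P},d) < \delta$, where $\delta$ is the constant from Lemma \ref{lemma: boundary and interior behaviors}. Being finite, $\mathcal{P}$ has a finite (though possibly large) degree $\mathcal{D}(\mathcal{P})$; this is the one place where the potential infinite-dimensionality of $X$ is circumvented, since we never need a bounded-order cover of $X$, only this single fixed finite cover. Set $\mathcal{V}_0 = \bigvee_{u \in \partial[-N,N]^k \cap \mathbb{Z}^k} T^{-u}\mathcal{P}$, so that each member of $\mathcal{V}_0$ has $d^T_{\partial[-N,N]^k}$-diameter $< \delta$. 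Using the subadditivity of degree (\ref{eq: subadditivity of degree}) (iterated over the finitely many indices), the invariance of $\mathcal{D}$ under the homeomorphisms $T^{-u}$, and the count $|\partial[-N,N]^k \cap \mathbb{Z}^k| = O(N^{k-1})$, I obtain
\[ \mathcal{D}(\mathcal{V}_0) \le |\partial[-N,N]^k \cap \mathbb{Z}^k| \cdot \mathcal{D}(\mathcal{P}) = O(N^{k-1}). \]
I then pass to an open refinement $\mathcal{V}$ of $\mathcal{V}_0$ realizing this degree, so that $\mathrm{ord}(\mathcal{V}) = \mathcal{D}(\mathcal{V}_0) = O(N^{k-1})$ while still $\mathrm{mesh}(\mathcal{V}, d^T_{\partial[-N,N]^k}) < \delta$.

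The crucial step is to upgrade the small \emph{boundary} diameter to small \emph{box} diameter. Fix $V \in \mathcal{V}$ and define a relation on $V$ by $x \approx y \iff d^T_{[-N,N]^k}(x,y) < c$. Any $x,y \in V$ satisfy $d^T_{\partial[-N,N]^k}(x,y) < \delta$, so Lemma \ref{lemma: boundary and interior behaviors} forbids $d^T_{[-N,N]^k}(x,y)$ from lying in $[c,2c]$; thus for points of $V$ this distance is always either $< c$ or $> 2c$. Combined with the triangle inequality for the genuine metric $d^T_{[-N,N]^k}$, this dichotomy makes $\approx$ an equivalence relation whose classes are mutually $d^T_{[-N,N]^k}$-separated by more than $2c$; hence each class is open (the metric $d^T_{[-N,N]^k}$ induces the topology of $X$, since $0$ lies in the box) and has $d^T_{[-N,N]^k}$-diameter $\le c$. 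Splitting every $V$ into its $\approx$-classes produces an open cover $\mathcal{V}'$ with $\mathrm{mesh}(\mathcal{V}', d^T_{[-N,N]^k}) \le c \le 2c$. Because this splitting replaces each set by a disjoint union of relatively clopen pieces, no point is covered more times than before, so $\mathrm{ord}(\mathcal{V}') \le \mathrm{ord}(\mathcal{V}) = O(N^{k-1})$. Therefore $\widim_{2c}(X, d^T_{[-N,N]^k}) \le \mathrm{ord}(\mathcal{V}') = O(N^{k-1})$, which is the assertion.

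I expect the clustering step to be the main obstacle: the delicate points are verifying that the boundary control of Lemma \ref{lemma: boundary and interior behaviors} genuinely forces the \emph{global} box distance on each $V$ into the region where it is either $< c$ or $> 2c$, and that the resulting decomposition into clopen classes neither enlarges the box-mesh beyond $2c$ nor increases the order. Everything else, namely the count $|\partial[-N,N]^k \cap \mathbb{Z}^k| = O(N^{k-1})$ and the degree subadditivity, is routine. I note that Lemma \ref{lemma: long time boundedness} is not needed for this proposition; it will instead be used afterwards to convert the fixed scale $2c$ into arbitrarily small scales (by enlarging the box) when deducing $\mdim(X,R) < \infty$ from this estimate.
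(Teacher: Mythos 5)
Your proof is correct and follows the paper's own argument essentially verbatim: a boundary cover $\bigvee_{u\in \partial[-N,N]^k}T^{-u}\mathcal{U}$ of degree $O(N^{k-1})$ via subadditivity, a refinement realizing that degree, and then a splitting of each refinement element into pieces of $d^T_{[-N,N]^k}$-diameter at most $2c$ justified by Lemma \ref{lemma: boundary and interior behaviors}. The only cosmetic difference is that you define the pieces directly by the relation $d^T_{[-N,N]^k}(x,y)<c$, whose transitivity you get from the dichotomy plus the triangle inequality, whereas the paper uses chains of $c$-steps (``$c$-approximately connected components'') and a discrete intermediate-value argument; under the dichotomy the two decompositions coincide.
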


\begin{proof}
Let $\delta>0$ be the constant introduced in Lemma \ref{lemma: boundary and interior behaviors}.
Choose an open cover $\mathcal{U}= \{U_1,\dots, U_L\}$ of $X$ with
$\mathrm{mesh}(\mathcal{U},d) < \delta$.
For $N\geq 1$ we consider the open cover
\begin{equation} \label{eq: boundary condition}
    \bigvee_{u\in \partial [-N,N]^k}T^{-u}\mathcal{U}.
\end{equation}
It follows from (\ref{eq: subadditivity of degree}) in \S \ref{section: mean dimension}
that
\[ \mathcal{D}\left( \bigvee_{u\in \partial [-N,N]^k}T^{-u}\mathcal{U}\right)  \leq
    \left|\mathbb{Z}^k\cap \partial [-N,N]^k\right|\cdot \mathcal{D}(\mathcal{U})  \leq
     2^k (2N+1)^{k-1} L. \]
Thus there exists a refinement $\mathcal{V}_N$ of (\ref{eq: boundary condition}) satisfying
$\mathrm{ord}(\mathcal{V}_N) \leq 2^k (2N+1)^{k-1}L$.

Take $V\in \mathcal{V}_N$.
We define an equivalence relation on $V$ as follows: For $x,y\in V$ we write $x\sim_V y$ if there exists a
finite sequence $x_0,x_1,\dots,x_n$ in $V$ satisfying
\begin{equation}  \label{eq: equivalence relation}
     x_0 = x, \quad x_n = y, \quad \forall 0\leq i <n: \> d^T_{[-N,N]^k}(x_i, x_{i+1}) < c.
\end{equation}
Let $V= V_1\cup \dots \cup V_{a(V)}$ be the decomposition into the equivalence classes.
Set $\mathcal{W}_N = \{V_i|\, V\in \mathcal{V}_N, 1\leq i \leq a(V)\}$.
That is, $\mathcal{W}_N$ is obtained from  $\mathcal{V}_N$ 
by breaking its elements into  ``$c$-approximately connected components'' with respect to the metric $d^{ T}_{[-N,N]^k}$.
This is an open cover of $X$ with
\begin{equation}  \label{eq: order of W_N}
     \mathrm{ord}(\mathcal{W}_N) = \mathrm{ord}(\mathcal{V}_N) \leq 2^k (2N+1)^{k-1} L.
\end{equation}

\begin{claim}  \label{claim: mesh of W_N}
    \[  \mathrm{mesh}(\mathcal{W}_N, d^T_{[-N,N]^k}) \leq 2c. \]
\end{claim}

\begin{proof}
 Suppose the statement is false: There exist $V\in \mathcal{V}_N$ and $x,y\in V$ satisfying
 $x\sim_V y$ and $d_{[-N,N]^k}(x,y) > 2c$.
 It follows from the definition of $\sim_V$ that we can find $x_0, \dots, x_n$ in $V$ satisfying (\ref{eq: equivalence relation}).
 Then some $x_i$ must satisfy $c\leq d^T_{[-N,N]^k}(x, x_i) \leq 2c$.
  Since $\mathcal{V}_N$ is a refinement of (\ref{eq: boundary condition}), it also satisfies
  \[ d^T_{\partial [-N,N]^k}(x,x_i) \leq  \mathrm{mesh}  (\mathcal{U}, d) < \delta. \]
  This contradicts Lemma \ref{lemma: boundary and interior behaviors}.
\end{proof}

From Claim \ref{claim: mesh of W_N} and (\ref{eq: order of W_N})
\[ \widim_{2c}\left(X, d_{[-N,N]^k}\right) \leq  \mathrm{ord}\left(\mathcal{W}_{N}\right)
   \leq 2^k (2N+1)^{k-1}L. \]
Thus we get
\[  \limsup_{N\to \infty} \frac{\widim_{2c} \left(X, d^{T}_{[-N,N]^k}\right)}{N^{k-1}}  \leq   2^{2k-1} L. \]
\end{proof}

The following lemma enables us to control the $R$-action by the information of the $T$-action.
This is contained in Shereshevsky \cite[Lemma 2.2]{Shereshevsky}.

\begin{lemma}\label{lem: Coding commuting action}
There exists $K >0$ such that for every $N >0$  the following holds:
If $x,y \in X$ satisfy 
$$d^T_{[-KN,KN]^k}(x,y) \leq 2c,$$
then
$$d^R_{[-N,N]^{k-1}}(x,y) \leq 2c.$$
\end{lemma}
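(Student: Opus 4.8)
The plan is to reduce the statement to a \emph{single-generator coding estimate} and then iterate it, using crucially that each generator of the $R$-action commutes with $T$. Write $e_1,\dots,e_{k-1}$ for the standard generators of $\mathbb{Z}^{k-1}$ and consider the finite set $\Phi = \{R^{e_1},R^{-e_1},\dots,R^{e_{k-1}},R^{-e_{k-1}}\}$ of homeomorphisms of $X$. For $v\in\mathbb{Z}^{k-1}$ the map $R^v$ is a composition of exactly $\|v\|_1=\sum_i|v_i|$ elements of $\Phi$, and $\|v\|_1\le (k-1)N$ whenever $v\in[-N,N]^{k-1}$. Each $\phi\in\Phi$ commutes with $T$, so $T^u\phi=\phi T^u$ for all $u\in\mathbb{Z}^k$; this commutation, together with expansiveness, is the only structural input.

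First I would establish a single-step estimate: there is an integer $M>0$ such that for every $\phi\in\Phi$ and all $x,y\in X$,
\[ d^T_{[-M,M]^k}(x,y)\le 2c \implies d(\phi x,\phi y)\le 2c. \]
This follows at once from Lemma \ref{lemma: long time boundedness}. Indeed, since $X$ is compact and $\Phi$ is finite, uniform continuity yields $\varepsilon_0>0$ with $d(a,b)<\varepsilon_0 \Rightarrow d(\phi a,\phi b)\le 2c$ for every $\phi\in\Phi$ simultaneously; taking $M:=m(\varepsilon_0)$ from Lemma \ref{lemma: long time boundedness}, the hypothesis $d^T_{[-M,M]^k}(x,y)\le 2c$ forces $d(x,y)<\varepsilon_0$ and hence the conclusion. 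The next step upgrades this to a \emph{box-shifted} form: for any $\Omega\subset\mathbb{R}^k$ and any $\phi\in\Phi$,
\[ d^T_{\Omega+[-M,M]^k}(x,y)\le 2c \implies d^T_{\Omega}(\phi x,\phi y)\le 2c. \]
To see this, fix $u\in\Omega\cap\mathbb{Z}^k$ and apply the single-step estimate to the pair $(T^u x,T^u y)$: its hypothesis $d^T_{[-M,M]^k}(T^u x,T^u y)\le 2c$ holds because $u+[-M,M]^k\subset\Omega+[-M,M]^k$, and its conclusion reads $d(\phi T^u x,\phi T^u y)\le 2c$, i.e.\ $d(T^u\phi x,T^u\phi y)\le 2c$ by commutation. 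Since $u\in\Omega$ was arbitrary this gives $d^T_{\Omega}(\phi x,\phi y)\le 2c$.

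Finally I would iterate the box-shifted estimate along a factorization $R^v=\phi_\ell\circ\cdots\circ\phi_1$ with $\phi_j\in\Phi$ and $\ell=\|v\|_1$. Starting from $d^T_{[-\ell M,\ell M]^k}(x,y)\le 2c$ and applying the box-shifted estimate $\ell$ times (each application peeling off one factor $[-M,M]^k$ and passing to the next intermediate pair $(\phi_j\cdots\phi_1 x,\phi_j\cdots\phi_1 y)$) yields $d(R^v x,R^v y)\le 2c$. Because $\ell=\|v\|_1\le(k-1)N$ for every $v\in[-N,N]^{k-1}$, the single box $[-KN,KN]^k$ with $K:=(k-1)M$ dominates all the boxes $[-\ell M,\ell M]^k$ needed, so $d^T_{[-KN,KN]^k}(x,y)\le 2c$ implies $d(R^v x,R^v y)\le 2c$ for all such $v$, which is exactly $d^R_{[-N,N]^{k-1}}(x,y)\le 2c$. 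The point demanding care is the bookkeeping in this iteration: because the precision $2c$ is preserved verbatim at each step (hypothesis and conclusion of the box-shifted estimate share the same constant $2c$), there is no degradation under composition, and the window grows only linearly in $\|v\|_1$. This linear, loss-free propagation is precisely what produces a single constant $K$ uniform in both $N$ and $v$, and I expect it to be the crux; the single-step estimate itself is immediate from Lemma \ref{lemma: long time boundedness}.
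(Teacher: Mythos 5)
Your proof is correct and takes essentially the same route as the paper's: both rest on uniform continuity of the finitely many $R$-maps, on Lemma \ref{lemma: long time boundedness} to convert a $T$-window bound into $d$-closeness, and on an induction that uses commutation to peel off a fixed $T$-margin per unit step in the $R$-direction. The only difference is bookkeeping: the paper grows the cube $[-j,j]^{k-1}$ one shell at a time using moves in $\{-1,0,1\}^{k-1}$ (yielding $K=m(\varepsilon)$), whereas you iterate generator by generator along an $\ell_1$-factorization of $R^v$, which merely costs the harmless extra factor in $K=(k-1)M$.
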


\begin{proof}
There exists $\varepsilon >0$ such that 
$$d(R^u x,R^u y)\leq 2c \mbox{ for all } u \in \{-1,0,1\}^{k-1}$$
whenever $d(x,y) < \varepsilon$.
By Lemma \ref{lemma: long time boundedness} there exists $K >0$  so that $d^T_{[-K,K]^k}(x,y) \leq 2c$ implies $d(x,y) < \varepsilon$. 
For $A \times B \subset \mathbb{R}^{k-1} \times \mathbb{R}^{k}$ write
$$d^{(R,T)}_{A \times B}(x,y) = \max_{u \in A\cap \mathbb{Z}^{k-1}, v \in B \cap \mathbb{Z}^k}d(R^u(T^v(x)),R^u(T^v(y)).$$
Suppose $d^T_{[-KN,KN]^k}(x,y) \leq 2c$.
Then it follows by induction on $0 \le j \le N$ that 
$$d^{(R,T)}_{[-j,j]^{k-1} \times [-K(N-j),K(N-j)]^k}(x,y) \leq 2c.$$
The claim of the lemma now follows by setting $j=N$.
\end{proof}

\begin{proof}[Proof of Theorem \ref{Theorem: Mean dimension of co-rank 1 commuting action}]
The mean dimension $\mdim(X,R)$ is given by
\[ \lim_{\varepsilon \to 0} \left(\lim_{N\to \infty}
   \frac{\widim_\varepsilon\left(X, d^R_{[-N,N]^{k-1}}\right)}{(2N+1)^{k-1}}\right). \]
By Lemma \ref{lemma: long time boundedness} for every $\epsilon >0$ there exists $m=m(\varepsilon)>0$ so that 
$$ d^{(R,T)}_{[-N,N]^{k-1}\times [-m,m]^k} (x,y) \leq 2c \> \Longrightarrow \> d^R_{[-N,N]^{k-1}}(x,y) < \varepsilon. $$
By Lemma \ref{lem: Coding commuting action} 
\[ d^T_{[-KN-m,KN+m]^k}(x,y) \leq 2c \Longrightarrow  d^{(R,T)}_{[-N,N]^{k-1}\times [-m,m]^k} (x,y) \leq 2c.\]
Hence
\[   \widim_\varepsilon\left(X, d^R_{[-N,N]^{k-1}}\right) \leq 
     \widim_{2c} \left(X, d^T_{[-KN-m,KN+m]^k}\right). \]
Noting $m = m(\varepsilon)$ is independent of $N$
\[   \lim_{N\to \infty} \frac{\widim_\varepsilon\left(X, d^R_{[-N,N]^{k-1}}\right)}{(2N+1)^{k-1}}
       \leq   (K/2)^{k-1} \limsup_{N\to \infty} \frac{\widim_{2c}\left(X, d^T_{[-N,N]^k}\right)}{N^{k-1}}. \]
By Proposition \ref{main proposition} the right-hand side is finite and independent of $\varepsilon$.
Thus $\mdim(X,R)$ is finite
\end{proof}

\begin{remark}
A similar argument shows that
there exists $C < \infty$ such that every rank $(k-1)$ subgroup $A\subset \mathbb{Z}^k$ satisfies
\[ \mdim(X,T|_A) \leq C \lim_{N\to \infty} \frac{N^{k-1}}{\left|A\cap [-N,N]^k\right|}. \]
\end{remark}

\section{Second proof of Theorem \ref{Theorem: Mean dimension of co-rank 1 commuting action} 
and the proof of Theorem \ref{second main theorem}}
\label{section: second proof of first main theorem and the proof of second main theorem}

Here we prove Theorems \ref{Theorem: Mean dimension of co-rank 1 commuting action} and \ref{second main theorem}
by adapting Fathi's method \cite[Section 5]{Fathi} to our settings.

\subsection{Frink's metrization theorem} \label{subsection: Frink's metrization theorem}

Here we review a classical theorem of Frink \cite{Frink}.

\begin{theorem}[Frink, 1937]  \label{theorem: Frink}
Let $X$ be a set and let $\rho$ a nonnegative function on $X\times X$ satisfying
\begin{enumerate}
   \item $\rho(x,y) = \rho(y,x)$ for all $x,y\in X$.
   \item $\rho(x,y)=0$ if and only if $x=y$.
   \item $\rho(x,z) \leq 2\max(\rho(x,y),\rho(y,z))$ for all $x,y,z\in X$.
\end{enumerate}
Then there exists a distance function $D$ on $X$ satisfying
\[  \frac{\rho(x,y)}{4} \leq D(x,y) \leq \rho(x,y), \quad (\forall x,y\in X). \]
Here ``distance function'' means that it satisfies the above (1), (2) and the triangle inequality.
\end{theorem}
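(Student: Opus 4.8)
The plan is to construct $D$ by the standard chaining (or ``string'') construction and then to verify the two-sided estimate, the lower bound being the only real work. Explicitly, I would set
\[
  D(x,y) = \inf \sum_{i=0}^{n-1} \rho(x_i, x_{i+1}),
\]
where the infimum runs over all finite chains $x = x_0, x_1, \dots, x_n = y$ in $X$. Several properties are then immediate: $D$ is nonnegative and symmetric (reverse a chain, using property (1)), $D(x,x) = 0$ via the trivial chain, and the triangle inequality $D(x,z) \le D(x,y) + D(y,z)$ holds because the concatenation of a chain from $x$ to $y$ with a chain from $y$ to $z$ is a chain from $x$ to $z$. The upper bound $D(x,y) \le \rho(x,y)$ is also free: take the one-step chain $x, y$.

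Everything therefore reduces to the lower bound $D(x,y) \ge \rho(x,y)/4$, which (together with property (2)) also supplies the remaining metric axiom $D(x,y) = 0 \Rightarrow x = y$. I would isolate this as a combinatorial lemma: for every finite chain $x_0, \dots, x_n$,
\[
  \rho(x_0, x_n) \le 4 \sum_{i=0}^{n-1} \rho(x_i, x_{i+1}).
\]
Taking the infimum over chains from $x$ to $y$ then yields $\rho(x,y) \le 4\, D(x,y)$, as desired.

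The lemma I would prove by strong induction on the chain length $n$, and this is where the whole argument --- and the constant $4$ --- lives. Writing $S = \sum_{i=0}^{n-1} \rho(x_i, x_{i+1})$, the case $n = 1$ is trivial, and $S = 0$ forces all $x_i$ equal by property (2). For the inductive step I would split the chain in a balanced way: let $m$ be the largest index for which $\sum_{i=0}^{m-1} \rho(x_i, x_{i+1}) \le S/2$. By maximality the left subchain $x_0, \dots, x_m$ and the right subchain $x_{m+1}, \dots, x_n$ each have edge-sum at most $S/2$, so the induction hypothesis gives $\rho(x_0, x_m) \le 2\cdot(S/2) = S$ and $\rho(x_{m+1}, x_n) \le S$, while the middle edge satisfies $\rho(x_m, x_{m+1}) \le S$ trivially. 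Applying the relaxed triangle inequality (property (3)) once to the triple $x_m, x_{m+1}, x_n$ gives $\rho(x_m, x_n) \le 2S$, and applying it a second time to $x_0, x_m, x_n$ gives $\rho(x_0, x_n) \le 2\max(S, 2S) = 4S$. This double use of the factor-$2$ inequality is exactly what produces the final constant $4$.

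The main obstacle I anticipate is the balanced-splitting bookkeeping in the inductive step: I must confirm that the split point $m$ always yields two subchains of length strictly less than $n$ (so the induction applies --- here one uses $0 \le m \le n-1$ when $S > 0$), and I must handle the degenerate cases $m = 0$ and $m + 1 = n$, in which one subchain is a single point and the corresponding $\rho$-value vanishes by property (2). Once the lemma is in place, the rest of the theorem is routine verification of the metric axioms, with the constants $1/4$ and $1$ coming directly from the lemma and the trivial chain respectively.
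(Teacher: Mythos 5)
Your construction of $D$ as the chain infimum, the reduction of the theorem to a chain inequality of the form $\rho(x_0,x_n)\le 4\sum_{i=0}^{n-1}\rho(x_i,x_{i+1})$, and the verification of the metric axioms all agree with the paper's proof (which reproduces Frink's original argument). The genuine gap is in your proof of the chain inequality: the balanced-splitting induction does not close. You state the lemma with constant $4$, but in the inductive step you invoke the inductive hypothesis with constant $2$ (``$\rho(x_0,x_m)\le 2\cdot(S/2)=S$''); these cannot be the same statement. If the hypothesis carries constant $4$, the subchains only give $\rho(x_0,x_m)\le 2S$ and $\rho(x_{m+1},x_n)\le 2S$, and your two applications of (3) then yield $\rho(x_m,x_n)\le 2\max(S,2S)=4S$ and $\rho(x_0,x_n)\le 2\max(2S,4S)=8S$, i.e.\ constant $8$, not $4$. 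If instead the hypothesis carries constant $2$, your step delivers $4$, not $2$. In fact no constant closes this induction: merging the three pieces (left subchain, middle edge, right subchain) with the two-point inequality (3) costs two applications, a factor $2\cdot2=4$, while the balanced split only halves the sums, gaining a factor $2$; so the best available constant doubles at each level of the recursion, whose depth is unbounded. The balanced-splitting argument you have in mind is the standard proof of the metrization lemma for uniform spaces, but there the hypothesis is the \emph{three}-point inequality $\rho(x,w)\le 2\max\left(\rho(x,y),\rho(y,z),\rho(z,w)\right)$, which merges all three pieces in a single application and makes the induction close with constant $2$; Frink's hypothesis (3) is strictly weaker and does not support this argument.

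The repair, which is exactly what the paper does, involves two ideas you are missing. First, one proves by induction the \emph{asymmetric} inequality
\[ \rho(x_0,x_n)\le 2\rho(x_0,x_1)+4\sum_{i=1}^{n-2}\rho(x_i,x_{i+1})+2\rho(x_{n-1},x_n), \]
whose unequal coefficients (2 on the two boundary edges, 4 on interior edges) are what make the bookkeeping work. Second, the split point is chosen not by balancing edge sums but by comparing endpoint distances: let $m$ be the minimal index with $\rho(x_0,x_m)>\rho(x_m,x_n)$. Then in the applications of (3) to the triples $(x_0,x_m,x_n)$ and $(x_0,x_{m-1},x_n)$ the maximum is resolved, giving $\rho(x_0,x_n)\le 2\rho(x_0,x_m)$ and $\rho(x_0,x_n)\le 2\rho(x_{m-1},x_n)$; \emph{adding} these two inequalities (rather than composing them) yields $\rho(x_0,x_n)\le \rho(x_0,x_m)+\rho(x_{m-1},x_n)$ with no factor $2$ remaining. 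Applying the inductive hypothesis to the two subchains $x_0,\dots,x_m$ and $x_{m-1},\dots,x_n$, which overlap in the single edge $(x_{m-1},x_m)$ whose coefficient becomes $2+2=4$, completes the induction and produces exactly the constant $4$ in the theorem.
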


\begin{proof}
We explain Frink's nice proof \cite[pp. 134-135]{Frink} for readers' convenience.

\begin{claim}  \label{claim: Frink}
For $x_0,x_1,\dots, x_n\in X$ with $n\geq 2$
\[  \rho(x_0,x_n) \leq 2\rho(x_0,x_1) + 4 \sum_{i=1}^{n-2} \rho(x_i,x_{i+1}) + 2\rho(x_{n-1},x_n). \]
When $n=2$, the right-hand side is just $2\rho(x_0,x_1) + 2\rho(x_1,x_2)$.
\end{claim}

Assuming this claim for the moment, we prove the theorem.
Let $x,y\in X$.
We define $D(x,y)$ as the infimum of $\sum_{i=0}^{n-1}\rho(x_i,x_{i+1})$ over all
$x_0,x_1,\dots, x_n\in X$ with $x_0=x$ and $x_n=y$.
It immediately follows that $D$ satisfies $D(x,y)=D(y,x)$, $D(x,y) \leq \rho(x,y)$ and the triangle inequality.
The inequality $\rho(x,y)/4\leq D(x,y)$ follows from Claim \ref{claim: Frink}.
Then $D$ satisfies (2) and becomes a distance function.

Now we start the proof of Claim \ref{claim: Frink}.
The proof is the induction on $n$.
The statement is true for $n=2$ by the condition (3).
Let $N\geq 3$ and suppose the statement is true for $n\leq N-1$.
Let $x_0,\dots,x_N\in X$. We can assume $x_0\neq x_N$.
We define $m\in [1,N]$ as the minimum integer satisfying $\rho(x_0,x_m)>\rho(x_m,x_N)$.
If $m=1$ then the statement follows from (3) because
\[ \rho(x_0,x_N) \leq 2\max\left(\rho(x_0,x_1),\rho(x_1,x_N)\right)  =2\rho(x_0,x_1). \]
If $m=N$ then $\rho(x_0,x_{N-1}) \leq \rho(x_{N-1},x_N)$ and hence the statement follows from
\[  \rho(x_0,x_N) \leq 2\max\left(\rho(x_0,x_{N-1}),\rho(x_{N-1},x_N)\right) = 2\rho(x_{N-1},x_N). \]
So we assume $2\leq m\leq N-1$. This implies
\[  \rho(x_0,x_{m-1}) \leq \rho(x_{m-1},x_N), \quad
     \rho(x_0,x_m) > \rho(x_m, x_N). \]
Hence by (3)
\[    \rho(x_0,x_N) \leq 2\max\left(\rho(x_0,x_m), \rho(x_m,x_N)\right) = 2\rho(x_0,x_m), \]
\[    \rho(x_0,x_N) \leq 2\max\left(\rho(x_0,x_{m-1}), \rho(x_{m-1},x_N)\right) = 2\rho(x_{m-1},x_N).  \]
By adding these two inequalities, we get
\[ \rho(x_0,x_N) \leq \rho(x_0,x_m) + \rho(x_{m-1},x_N). \]
By the induction hypothesis,
\[ \rho(x_0,x_m) \leq 2\rho(x_0,x_1) + 4\sum_{i=1}^{m-2} \rho(x_i,x_{i+1}) + 2\rho(x_{m-1},x_m), \]
\[  \rho(x_{m-1}, x_N) \leq 2\rho(x_{m-1},x_m) + 4\sum_{i=m}^{N-2}\rho(x_i,x_{i+1}) + 2\rho(x_{N-1},x_N). \]
By adding these two inequalities, we get the statement of the claim.
\end{proof}

\subsection{Proofs of Theorems \ref{Theorem: Mean dimension of co-rank 1 commuting action} and \ref{second main theorem}}
\label{subsection: proof of first and second main theorems}

Throughout this subsection we assume
that $(X,d)$ is a compact metric space with an
expansive action $T:\mathbb{Z}^k\times X\to X$, and that $R:\mathbb{Z}^{k-1} \times X \to X$ commutes with $T$.
We choose $c>0$ such that any two distinct points $x,y\in X$ satisfy
\[ \sup_{u \in \mathbb{Z}^k} d(T^u x, T^u y) > c. \]
Fix an integer $l>0$ such that if $x,y\in X$ satisfy $d(x,y) \geq c/2$ then there exists $u\in \mathbb{Z}^k$
with $|u| \leq l$ satisfying $d(T^u x, T^u y) \geq c$.
Fix $\alpha>1$ with $\alpha^l < 2$.

Let $x,y\in X$. We define
\[ \mathbf{n}(x,y) = \min \left\{n\geq 0|\, \exists u\in \mathbb{Z}^k: \text{ $|u|\leq n$ and $d(T^u x, T^u y) \geq c$} \right\}. \]
If $x=y$ then we set $\mathbf{n}(x,y) = \infty$.
We set $\rho(x,y) = \alpha^{-\mathbf{n}(x,y)}$.

\begin{lemma}  \label{lemma: properties of rho}
The function $\rho$ satisfies:
  \begin{enumerate}
    \item $\rho(x,y) = \rho(y,x)$.
    \item $\rho(x,y) = 0$ if and only if $x=y$.
    \item $\rho(x,z) \leq 2\max(\rho(x,y), \rho(y,z))$.
    \item If $x_n\to x$ and $y_n\to y$ in $X$ as $n\to \infty$ then
    \[ \limsup_{n\to \infty} \rho(x_n, y_n) \leq \rho(x,y). \]
    \item $\rho$ is compatible with the topology of $X$. Namely, the balls (with respect to $\rho$)
    \[ B_r(x,\rho) = \{y\in X|\, \rho(x,y) < r\} \quad (x\in X, r>0) \]
    form an open base of the topology of $X$.
  \end{enumerate}
\end{lemma}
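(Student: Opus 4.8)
The plan is to read off all five properties from the single integer-valued quantity $\mathbf{n}(x,y)$, exploiting only two structural facts: expansiveness at scale $c$, and the ``local separation'' property defining $l$ (if $d(p,q)\ge c/2$ then some $T^{v}$ with $|v|\le l$ separates $p,q$ by at least $c$). Properties (1) and (2) are immediate. Symmetry of $d$ gives $\mathbf{n}(x,y)=\mathbf{n}(y,x)$, hence (1). For (2), $\rho(x,y)=0$ means $\mathbf{n}(x,y)=\infty$, i.e. $d(T^{u}x,T^{u}y)<c$ for every $u\in\mathbb{Z}^{k}$; by the choice of the expansiveness constant this forces $x=y$, and conversely $\rho(x,x)=\alpha^{-\infty}=0$.

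The heart of the lemma is (3), and this is where the choices of $l$ and $\alpha$ are used. First I would establish the quasi-ultrametric estimate
\[ \mathbf{n}(x,z)\ \ge\ \min\bigl(\mathbf{n}(x,y),\,\mathbf{n}(y,z)\bigr)-l. \]
To prove it, take $u$ with $|u|=\mathbf{n}(x,z)$ and $d(T^{u}x,T^{u}z)\ge c$; the triangle inequality forces $d(T^{u}x,T^{u}y)\ge c/2$ or $d(T^{u}y,T^{u}z)\ge c/2$. Applying the property defining $l$ to the separated pair produces $v$ with $|v|\le l$ and, say, $d(T^{u+v}x,T^{u+v}y)\ge c$, whence $\mathbf{n}(x,y)\le|u|+l$; the other case is symmetric. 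Since $\alpha^{l}<2$, the displayed inequality gives at once
\[ \rho(x,z)=\alpha^{-\mathbf{n}(x,z)}\le\alpha^{l}\,\alpha^{-\min(\mathbf{n}(x,y),\mathbf{n}(y,z))}<2\max\bigl(\rho(x,y),\rho(y,z)\bigr), \]
using $\alpha^{-\min(a,b)}=\max(\alpha^{-a},\alpha^{-b})$. The degenerate cases in which two of $x,y,z$ coincide are trivial and are checked directly.

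Properties (4) and (5) both rest on the observation that for each integer $N$ the set $\{(x,y):\mathbf{n}(x,y)\ge N\}=\bigcap_{|u|\le N-1}\{(x,y):d(T^{u}x,T^{u}y)<c\}$ is open, being a finite intersection of open conditions; equivalently $\mathbf{n}$ is lower semicontinuous, so $\rho=\alpha^{-\mathbf{n}}$ is upper semicontinuous. For (4), lower semicontinuity along $(x_{n},y_{n})\to(x,y)$ yields $\liminf_{n}\mathbf{n}(x_{n},y_{n})\ge\mathbf{n}(x,y)$, which is exactly $\limsup_{n}\rho(x_{n},y_{n})\le\rho(x,y)$. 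For (5), the same openness shows each ball $B_{r}(x,\rho)$ is open, since $\rho(x,y)<r$ amounts to $\mathbf{n}(x,y)\ge N_{r}$ for a suitable integer $N_{r}$. To see the balls are fine enough to form a base, I would argue by contradiction: if an open set $U\ni x$ contained no ball $B_{r}(x,\rho)$, I could choose $y_{N}\notin U$ with $\mathbf{n}(x,y_{N})\ge N$, pass by compactness to a subsequence $y_{N}\to y^{\ast}\in X\setminus U$, and check that $d(T^{u}x,T^{u}y^{\ast})\le c$ for every fixed $u$ (letting $N\to\infty$). Expansiveness then forces $y^{\ast}=x\in U$, a contradiction.

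The main obstacle is (3): everything turns on splitting the triangle inequality correctly so that the defining property of $l$ can be invoked, and on the arithmetic $\alpha^{l}<2$ that converts the additive loss $l$ into the multiplicative constant $2$. Once the openness of $\{\mathbf{n}\ge N\}$ is recorded, properties (1), (2), (4) and (5) are routine consequences of expansiveness and compactness.
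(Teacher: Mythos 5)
Your proof is correct and follows essentially the same route as the paper: the same triangle-inequality split at scale $c/2$ combined with the defining property of $l$ and the arithmetic $\alpha^{l}<2$ for property (3), lower semicontinuity of $\mathbf{n}$ for (4), and openness of balls plus a compactness/expansiveness argument for (5). Your quasi-ultrametric reformulation $\mathbf{n}(x,z)\geq \min\left(\mathbf{n}(x,y),\mathbf{n}(y,z)\right)-l$ and your explicit verification of the base property merely flesh out steps the paper treats tersely.
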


\begin{proof}
(1) and (2) are obvious.

(3) We can assume $x\neq z$ and let $m= \mathbf{n}(x,z)$.
There exists $|u|\leq m$ with $d(T^u x, T^u z) \geq c$.
Then either $d(T^u x, T^u y)\geq c/2$ or $d(T^u y, T^u z) \geq c/2$.
Suppose the former. (The latter is the same.)
There exists $|v|\leq l$ with $d(T^{u+v}x, T^{u+v}y) \geq c$.
Then $\mathbf{n}(x,y) \leq m+l$ and hence
\[ \rho(x,y) = \alpha^{-\mathbf{n}(x,y)} \geq \alpha^{-m} \alpha^{-l} > \frac{\rho(x,z)}{2}, \]
where we used $\alpha^l < 2$.

(4) It is straightforward to check $\liminf_{n\to \infty} \mathbf{n}(x_n,y_n) \geq \mathbf{n}(x,y)$.

(5) It follows from the property (4) above that the balls $B_r(x,\rho)$ are open (with respect to $d$).
Expansiveness implies that for any $x\in X$ and $R>0$ there exists $r>0$ satisfying
$B_r(x,\rho) \subset B_R(x,d)$.
Then the statement can be easily checked.
\end{proof}

By the properties (1), (2), (3) of Lemma \ref{lemma: properties of rho} and
 Frink's metrization theorem (Theorem \ref{theorem: Frink}) we can
find a distance function $D$ on $X$ satisfying
\[ \frac{\rho(x,y)}{4}    \leq D(x,y) \leq  \rho(x,y). \]
By the property (5) of Lemma \ref{lemma: properties of rho}, the distance $D$ is compatible with the topology of $X$.

\begin{lemma} \label{lemma: property of D}
 If $n\geq 1$ and $x,y\in X$ satisfy $\max_{|u|< n} D(T^u x, T^u y) < 1/(4\alpha)$ then
 $D(x,y) < \alpha^{-n}$.
\end{lemma}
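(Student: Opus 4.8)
The plan is to transfer the hypothesis, which is phrased in terms of the constructed metric $D$, into a lower bound on the integer-valued function $\mathbf{n}$, and then read off the conclusion from $D\leq \rho = \alpha^{-\mathbf{n}(\cdot,\cdot)}$. Concretely, it suffices to prove that $\mathbf{n}(x,y)\geq n+1$: once this is known, we get $D(x,y)\leq \rho(x,y)=\alpha^{-\mathbf{n}(x,y)}\leq \alpha^{-(n+1)} < \alpha^{-n}$, where the last strict inequality uses $\alpha>1$. So the whole lemma reduces to showing that $d(T^w x,T^w y)<c$ for every $w\in\mathbb{Z}^k$ with $|w|\leq n$.

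First I would invoke the lower bound $\rho\leq 4D$ provided by Frink's metrization theorem (Theorem \ref{theorem: Frink}). Applying it at the points $T^u x,T^u y$, the hypothesis $\max_{|u|<n} D(T^u x,T^u y) < 1/(4\alpha)$ gives $\rho(T^u x,T^u y) < 1/\alpha = \alpha^{-1}$ for every $u$ with $|u|\leq n-1$. Since $\rho = \alpha^{-\mathbf{n}}$ with $\alpha>1$ and $\mathbf{n}$ integer-valued, this is equivalent to $\mathbf{n}(T^u x,T^u y)\geq 2$ for all such $u$. Unwinding the definition of $\mathbf{n}$ and using $T^v\circ T^u=T^{u+v}$, this says exactly that $d(T^{u+v}x,T^{u+v}y)<c$ for every $u$ with $|u|\leq n-1$ and every $v$ with $|v|\leq 1$.

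The heart of the argument is then a purely combinatorial covering step: every lattice point $w$ with $|w|\leq n$ can be written as $w=u+v$ with $|u|\leq n-1$ and $|v|\leq 1$. For the sup-norm this is immediate coordinate by coordinate, leaving $w_i$ unchanged when $|w_i|\leq n-1$ and splitting $w_i=(n-1)\,\mathrm{sign}(w_i)+\mathrm{sign}(w_i)$ when $|w_i|=n$. Combining this decomposition with the previous paragraph yields $d(T^w x,T^w y)<c$ for all $w$ with $|w|\leq n$, which is precisely the statement $\mathbf{n}(x,y)\geq n+1$ and finishes the proof.

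I expect no serious obstacle here; the only delicate point is the bookkeeping with the constants and the strict inequalities. One must check that the threshold $1/(4\alpha)$ is exactly calibrated so that the factor-$4$ loss in $\rho\leq 4D$ still produces $\rho<\alpha^{-1}$, hence $\mathbf{n}\geq 2$ rather than merely $\mathbf{n}\geq 1$, and that the integrality of $\mathbf{n}$ upgrades the conclusion $\mathbf{n}(x,y)>n$ to $\mathbf{n}(x,y)\geq n+1$, which is what delivers the strict bound $D(x,y)<\alpha^{-n}$. Everything else is the routine coordinate-wise covering of the box $\{|w|\leq n\}$ by translates of $\{|v|\leq 1\}$.
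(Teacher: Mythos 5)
Your proof is correct and is essentially the paper's own argument: both convert the hypothesis into $\rho(T^u x,T^u y)<1/\alpha$ via Frink's bound $\rho\leq 4D$, deduce $\mathbf{n}(T^u x,T^u y)\geq 2$ for $|u|<n$, and conclude $\mathbf{n}(x,y)>n$, hence $D(x,y)\leq\rho(x,y)<\alpha^{-n}$. The only difference is that you spell out the coordinate-wise decomposition $w=u+v$, $|u|\leq n-1$, $|v|\leq 1$, which the paper compresses into the single phrase ``this implies $\mathbf{n}(x,y)>n$.''
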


\begin{proof}
It follows $\max_{|u|<n} \rho(T^u x, T^u y) < 1/\alpha$ and hence
$\min_{|u|<n} \mathbf{n}(T^u x, T^u y) >1$.
This implies $\mathbf{n}(x,y) > n$ and $D(x,y) \leq \rho(x,y) < \alpha^{-n}$.
\end{proof}

Let $x\neq y$ be two points in $X$.
There exists $v \in \mathbb{Z}^k$ with $d(T^v x, T^v y) \geq c$ and hence $\rho(T^v x, T^v y) =1$.
Thus 
\[  \sup_{u \in \mathbb{Z}^k} D(T^u x, T^u y) \geq \frac{1}{4} > \frac{1}{4\alpha}. \]
By the same argument as in Lemma \ref{lem: Coding commuting action}, we can prove that there exists $K\geq 1$
such that for any $N\geq 1$
\[  D^T_{[-KN,KN]^k} (x,y) < \frac{1}{4\alpha} \Longrightarrow 
    D^R_{[-N,N]^{k-1}}(x,y) < \frac{1}{4\alpha}. \]
Then for any $n, N\geq 1$
\begin{equation*}
    \begin{split}
      D^T_{[-KN-n,KN+n]^k}(x,y) < \frac{1}{4\alpha}  &\Longrightarrow D^{(R,T)}_{[-N,N]^{k-1}\times [-n,n]^k}(x,y) < \frac{1}{4\alpha} \\
     &\Longrightarrow   D^R_{[-N,N]^{k-1}}(x,y) < \alpha^{-n} \quad 
     (\text{by Lemma \ref{lemma: property of D}}).
    \end{split}
\end{equation*}    
Thus 
\begin{equation}  \label{eq: bound on R by T}
  \#\left(X, D^R_{[-N,N]^{k-1}},\alpha^{-n}\right) \leq 
  \#\left(X, D^T_{[-KN-n,KN+n]^k}, 1/(4\alpha)\right).
\end{equation}

\begin{proof}[Proof of Theorems \ref{Theorem: Mean dimension of co-rank 1 commuting action} and \ref{second main theorem}]

We will prove 
\begin{equation} \label{eq: main statement}
    \overline{\mdim}(X,R,D) \leq 2(K+1)^k \frac{h_{\mathrm{top}}(T)}{\log\alpha}.
\end{equation}
This shows Theorems \ref{Theorem: Mean dimension of co-rank 1 commuting action} and \ref{second main theorem}
because $h_{\mathrm{top}}(T) < \infty$ for every expansive action and
 $\mdim(X,R) \leq \overline{\mdim}(X,R,D)$ by Theorem \ref{theorem: Lindenstrauss--Weiss}.

Let $\delta>0$ be arbitrary.
It follows from the definition of the topological entropy that there exists $M>0$ such that
for any $N\geq M$
\[  \frac{1}{(2N+1)^k} \log\#\left(X, D^{T}_{[-(K+1)N,(K+1)N]^k}, 1/(4\alpha)\right)
    < (K+1)^k (h_{\mathrm{top}}(T) + \delta). \]
Let $0<\varepsilon < \alpha^{-M}$.
Choose $N \geq M$ with $\alpha^{-N} < \varepsilon \leq \alpha^{-N+1}$.
By using (\ref{eq: bound on R by T}) with $n=N$
\begin{equation*}
   \begin{split}
     \#\left(X, D^R_{[-N,N]^{k-1}},\varepsilon\right) &\leq  \#\left(X, D^R_{[-N,N]^{k-1}},\alpha^{-N}\right)  \\
     &\leq \#\left(X, D^T_{[-KN-N,KN+N]^k}, 1/(4\alpha)\right).  
   \end{split}
\end{equation*}   
Thus 
\[  \log \#\left(X, D^R_{[-N,N]^{k-1}},\varepsilon\right)  < (2N+1)^k (K+1)^k (h_{\mathrm{top}}(T) + \delta).   \] 
By the second equality of (\ref{eq: elementary bound on S}) 
\[ S(X,R,D,\varepsilon) < (2N+1) (K+1)^k  (h_{\mathrm{top}}(T) + \delta).   \] 
From $\varepsilon \leq \alpha^{-N+1}$,
\[  N-1 \leq \frac{\log(1/\varepsilon)}{\log \alpha}. \]
Thus $S(X,R,D,\varepsilon)$ is bounded by
\[   \left(\frac{2\log(1/\varepsilon)}{\log\alpha} + 3\right) (K+1)^k \left(h_{\mathrm{top}}(T) + \delta\right). \]
So we get
\[ \overline{\mdim}(X,R,D) = \limsup_{\varepsilon \to 0} \frac{S(X,R,D,\varepsilon)}{\log(1/\varepsilon)}
   \leq    2 (K+1)^k \frac{h_{\mathrm{top}}(T) + \delta}{\log \alpha}. \]
Since $\delta>0$ is arbitrary, this proves (\ref{eq: main statement}).
\end{proof}

\subsection{Remark on entropy and metric mean dimension}  \label{subsection: remark on entropy and metric mean dimension}

The idea of the previous subsection is roughly summarized by the following correspondence:
\[  \text{Topological entropy of $\mathbb{Z}^k$-actions}   \longleftrightarrow
     \text{Metric mean dimension of $\mathbb{Z}^{k-1}$-actions}. \]
We will give one remark on this correspondence.
Let $(X,T)$ be a $\mathbb{Z}^k$-action (not necessarily expansive) 
and let  $d$ be a  metric that generates the topology of $X$ such that
 $\underline{\mdim}(X,T,d) < \infty$.
Let $f:X\to X$ be an \textbf{endomorphism}\footnote{Namely $f$ is a continuous  map (not necessarily invertible)
from $X$ to $X$ satisfying $f\circ T^u = T^u \circ f$ for all $u\in \mathbb{Z}^k$.}
of $(X,T)$ that is Lipschitz with respect to the metric $d$. The \textit{local Lipschitz constant of $f$} is given by
\begin{equation}  \label{eq: Local Lipschitz constant}
   L :=  \lim_{\varepsilon \to 0} \sup_{0<d(x,y)<\varepsilon} \frac{d(f(x), f(y))}{d(x,y)} < \infty.
\end{equation}
We denote $\log^+ L = \max\left(0, \log L\right)$.
\begin{proposition}  \label{proposition: entropy and metric mean dimension}
Under the above circumstances,
\[   h_{\mathrm{top}}(T,f) \leq \log^+ L\cdot  \underline{\mdim}(X,T,d), \]
where the left-hand side is the topological entropy of the $\mathbb{Z}^k \times \mathbb{Z}_{\geq 0}$-action generated by $T$ and $f$.
\end{proposition}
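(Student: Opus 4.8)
The plan is to adapt the classical inequality $h_{\mathrm{top}}(f)\le\overline{\dim}_B(X)\cdot\log^+\mathrm{Lip}(f)$, with the box dimension of $X$ replaced by the metric mean dimension of the commuting action $T$. Write $d^{(T,f)}_{[-N,N]^k\times[0,n]}(x,y)=\max\, d(T^uf^jx,T^uf^jy)$, the maximum taken over $u\in[-N,N]^k\cap\mathbb{Z}^k$ and $0\le j\le n$, for the dynamical metric of the $\mathbb{Z}^k\times\mathbb{Z}_{\ge0}$-action generated by $T$ and $f$. Fix $\eta>0$ and choose $\delta_0>0$ with $d(fx,fy)\le(L+\eta)d(x,y)$ whenever $d(x,y)<\delta_0$. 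The only place the commutation $f\circ T^u=T^u\circ f$ is used is the remark that $f$ is then $(L+\eta)$-Lipschitz, locally and uniformly in $N$, with respect to every dynamical metric $d^T_{[-N,N]^k}$: from $d(T^ufx,T^ufy)=d(fT^ux,fT^uy)$ we get, taking the maximum over $u$,
\[ d^T_{[-N,N]^k}(fx,fy)\le(L+\eta)\,d^T_{[-N,N]^k}(x,y)\qquad\text{whenever }d^T_{[-N,N]^k}(x,y)<\delta_0. \]

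Iterating this estimate (exactly as for a single Lipschitz map) shows that any subset $U$ of $X$ of $d^T_{[-N,N]^k}$-diameter less than $\varepsilon(L+\eta)^{-n}$ has all its images $f^jU$, $0\le j\le n$, of $d^T_{[-N,N]^k}$-diameter less than $\varepsilon$, hence combined diameter less than $\varepsilon$ in $d^{(T,f)}_{[-N,N]^k\times[0,n]}$. Therefore, for every $\varepsilon\in(0,\delta_0)$ and every $n$,
\[ \#\!\left(X,d^{(T,f)}_{[-N,N]^k\times[0,n]},\varepsilon\right)\le\#\!\left(X,d^T_{[-N,N]^k},\varepsilon(L+\eta)^{-n}\right). \]
Taking logarithms, dividing by $(2N+1)^k$ and letting $N\to\infty$ gives $S_n(\varepsilon)\le S\!\left(X,T,d,\varepsilon(L+\eta)^{-n}\right)$, where $S_n(\varepsilon):=\lim_{N\to\infty}(2N+1)^{-k}\log\#(X,d^{(T,f)}_{[-N,N]^k\times[0,n]},\varepsilon)$.

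The decisive point is that $S_n(\varepsilon)$ is subadditive in $n$, so by Fekete's lemma the combined entropy is an infimum:
\[ S(X,(T,f),d,\varepsilon)=\lim_{m\to\infty}\frac{S_m(\varepsilon)}{m+1}=\inf_{m\ge1}\frac{S_m(\varepsilon)}{m}\le\frac{S_n(\varepsilon)}{n}\le\frac{S\!\left(X,T,d,\varepsilon(L+\eta)^{-n}\right)}{n} \]
for every single $n\ge1$. Having an infimum (not merely a liminf) over $n$ is what lets me select, for each $\varepsilon$, the most favourable number of $f$-iterates. Put $s=\log(1/\varepsilon)$, $a=\log(L+\eta)$ and $\phi(t)=S(X,T,d,e^{-t})$, so that the last bound reads $S(X,(T,f),d,\varepsilon)\le\phi(s+na)/n$, while $\underline{\mdim}(X,T,d)=\liminf_{t\to\infty}\phi(t)/t$.

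The main obstacle, and the reason the \emph{lower} metric mean dimension appears, is to extract the liminf defining $\underline{\mdim}$ from the rigid arithmetic family of scales $\{s+na\}_n$. I would handle this by tuning $\varepsilon$ and $n$ together. Choose $t_j\to\infty$ with $\phi(t_j)/t_j\to\underline{\mdim}(X,T,d)$; for each $j$ pick a positive integer $n_j$ so that $s_j:=t_j-n_ja$ satisfies $s_j\to\infty$ and $s_j=o(t_j)$ (for instance targeting $s_j\asymp\sqrt{t_j}$), which forces $n_j\sim t_j/a\to\infty$ and $s_j/n_j\to0$. With $\varepsilon_j:=e^{-s_j}\to0$ the bound of the previous paragraph gives
\[ S(X,(T,f),d,\varepsilon_j)\le\frac{\phi(t_j)}{n_j}=\frac{\phi(t_j)}{t_j}\cdot\frac{t_j}{n_j}\longrightarrow a\cdot\underline{\mdim}(X,T,d), \]
because $t_j/n_j=a+s_j/n_j\to a$. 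Since $h_{\mathrm{top}}(T,f)=\lim_{\varepsilon\to0}S(X,(T,f),d,\varepsilon)$ is a genuine monotone limit, evaluating it along $\varepsilon_j\to0$ yields $h_{\mathrm{top}}(T,f)\le\log(L+\eta)\,\underline{\mdim}(X,T,d)$, and $\eta\downarrow0$ gives the factor $\log L=\log^+L$ when $L\ge1$. The case $L<1$ is treated separately: there $f$ is eventually non-expanding, the covering inequality holds with $\varepsilon$ left unchanged for all $n$, so $S_n(\varepsilon)$ is bounded in $n$, whence $S_n(\varepsilon)/n\to0$ and $h_{\mathrm{top}}(T,f)=0=\log^+L\cdot\underline{\mdim}(X,T,d)$.
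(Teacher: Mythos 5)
Your proof is correct and follows essentially the same route as the paper's: commutation gives a uniform local Lipschitz bound in every dynamical metric $d^T_{[-N,N]^k}$, iterating it yields the covering inequality $\#\left(X,d^{(T,f)}_{[-N,N]^k\times[0,n]},\varepsilon\right)\le\#\left(X,d^T_{[-N,N]^k},\varepsilon K^{-n}\right)$, and the lower metric mean dimension is then extracted by matching the number of $f$-iterates to a liminf-realizing sequence of scales, using the infimum (subadditivity) characterization of entropy at a fixed scale. The only difference is bookkeeping: the paper fixes the combined-action scale $\varepsilon$ and sends only the $T$-scale $\varepsilon_i\to 0$ with $n_i$ chosen so that $\varepsilon_i K^{n_i}\approx\varepsilon$ (treating $L<1$ and $L\ge 1$ uniformly via $K>\max(1,L)$), whereas you let both scales tend to zero in a coupled way and treat $L<1$ separately.
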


\begin{proof}
For $\Omega \subset \mathbb{R}^k \times \mathbb{R}_{\geq 0}$ we define a distance $d_\Omega$ on $X$ by
\[   d_\Omega (x,y) = \sup_{(u,n)\in \Omega\cap (\mathbb{Z}^k\times \mathbb{Z}_{\geq 0})}
     d\left(T^u\circ f^n(x), T^u\circ f^n(y)\right). \]
Take $K > \max(1,L)$.
Take $\varepsilon_0>0$ such that if $d(x,y) < \varepsilon_0$ then $d(f(x),f(y))  < K d(x,y)$.
If $U\subset X$ satisfies $\diam\left(U, d_{[-N,N]^{k}\times \{0\}}\right) < \varepsilon/K^n$ for some $n,N >0$ and
$0<\varepsilon < \varepsilon_0$ then
$\diam\left(U,d_{[-N,N]^{k}\times [0,n]}\right) < \varepsilon$.
Hence for $0<\varepsilon < \varepsilon_0$
\[ \#\left(X, d_{[-N,N]^{k}\times [0,n]},\varepsilon\right) \leq
    \#\left(X,d_{[-N,N]^{k}\times \{0\}}, \varepsilon/K^n\right). \]
We choose positive numbers $\varepsilon_1>\varepsilon_2>\dots\to 0$ satisfying
\[  \lim_{i\to \infty} \frac{S(X,T,d,\varepsilon_i)}{\log(1/\varepsilon_i)}  = \underline{\mdim}(X,T,d). \]
Fix $0<\varepsilon <\varepsilon_0$.
We choose integers $n_i\to \infty$ satisfying
$\varepsilon_i K^{n_i} \leq \varepsilon < \varepsilon_i K^{n_i-1}$.
Then for every $i\geq 1$
\[  \#\left(X,d_{[-N,N]^k\times [0,n_i]},\varepsilon\right) \leq
    \#\left(X, d_{[-N,N]^{k}\times [0,n_i]},\varepsilon_i K^{n_i}\right) \leq
    \#\left(X,d_{[-N,N]^{k}\times \{0\}}, \varepsilon_i \right). \]
From $\varepsilon < \varepsilon_i K^{n_i-1}$,
\[ n_i > \frac{\log(1/\varepsilon_i) + \log\varepsilon + \log K}{\log K}. \]
It follows that
\[ \lim_{N\to \infty} \frac{\log \#\left(X,d_{[-N,N]^k\times [0,n_i]},\varepsilon\right)}{(n_i+1)(2N+1)^k}
   \leq \frac{S(X,T,d,\varepsilon_i)}{\log(1/\varepsilon_i)}   \cdot
    \frac{\log K \cdot \log(1/\varepsilon_i)}{\log(1/\varepsilon_i) + \log\varepsilon + 2\log K}.    \]
Letting $i\to \infty$
\[ S(X,(T,f), d, \varepsilon) \leq \underline{\mdim}(X,T,d) \cdot \log K. \]
Since $0<\varepsilon< \varepsilon_0$ and $K>\max(1, L)$ are arbitrary, this proves the statement.
\end{proof}

When $k=0$, Proposition \ref{proposition: entropy and metric mean dimension}
is just a standard relation between topological entropy and box dimension
(\cite[Theorem 5.6]{Fathi}, \cite[Theorem 3.2.9]{Katok--Hasselblatt}).

\begin{example}
Let $M$ be a compact $C^1$-manifold (of finite dimension).
Consider the $\mathbb{Z}^k$-shift $\sigma$ on $M^{\mathbb{Z}^k}$.
Let $A\subset \mathbb{Z}^k$ be a finite set and $F:M^A \to M$ a $C^1$-map.
We define an endomorphism $f:M^{\mathbb{Z}^k}\to M^{\mathbb{Z}^k}$ of $\sigma$ by the \textit{smooth local rule} $F$:
\[ f\left((x_u)_{u\in \mathbb{Z}^k}\right) = \left(F\left((x_v)_{v\in u+A}\right)\right)_{u\in \mathbb{Z}^k}. \]
Then the topological entropy $h_{\mathrm{top}}(\sigma, f)$ is finite:
Take some distance $d$ on $M$ which comes from a Riemmanian metric and define a distance $D$ on $M^{\mathbb{Z}^k}$ by
\[ D(x,y) = \sum_{u\in \mathbb{Z}^k} 2^{-|u|} d(x_u, y_u). \]
The metric mean dimension $\underline{\mdim}\left(M^{\mathbb{Z}^k}, \sigma, D\right)$
is equal to $\dim M$ (and hence finite) and the map $f$ is Lipschitz with respect to $D$.
\end{example}

\section{Proof of Proposition \ref{proposition: counter-example of multidimensional Mane2}}
\label{section: proof of Proposition counter example}

Here we prove Proposition \ref{proposition: counter-example of multidimensional Mane2}.
Our construction is based on Lindenstrauss--Weiss \cite[Proposition 3.5]{Lindenstrauss--Weiss}.

\subsection{Width dimension} \label{subsection: width dimension}

Let $M= \mathbb{R}^2/\mathbb{Z}^2$ with the standard flat distance $d$.
(The diameter of $M$ is $1/\sqrt{2}$.)
We denote by $\ell^\infty$ the sup-distance on the product space $M^n$:
For $x=(x_0,\dots, x_{n-1}), y= (y_0,\dots,y_{n-1})\in M^n$
\[ \ell^\infty(x,y) = \max_{0\leq i\leq n-1} d(x_i,y_i). \]

\begin{lemma} \label{lemma: widim of M^n}
For $0<\varepsilon < 1/2$
\[   \widim_\varepsilon\left(M^n, \ell^\infty\right) = 2n. \]
\end{lemma}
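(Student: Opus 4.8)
The plan is to prove the two inequalities $\widim_\varepsilon\left(M^n,\ell^\infty\right) \le 2n$ and $\widim_\varepsilon\left(M^n,\ell^\infty\right) \ge 2n$ separately. The upper bound is soft: $M^n$ is homeomorphic to the torus $\mathbb{T}^{2n}$, so its topological dimension is $2n$. For any $\varepsilon>0$ I would start from a cover of $M^n$ by $\ell^\infty$-balls of radius $\varepsilon/2$ (mesh $\le \varepsilon$) and refine it to order $\le \dim M^n = 2n$ using the covering-dimension characterization; since refining never increases mesh, this gives $\widim_\varepsilon\left(M^n,\ell^\infty\right) \le 2n$.

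The substance is the lower bound, for which I would run a degree argument, and this is where the hypothesis $\varepsilon<1/2$ enters through the injectivity radius of $M=\mathbb{R}^2/\mathbb{Z}^2$. Suppose for contradiction that $\widim_\varepsilon\left(M^n,\ell^\infty\right)\le 2n-1$. Then there is an open cover $\mathcal{U}$ with $\mathrm{mesh}(\mathcal{U},\ell^\infty)\le\varepsilon$ and $\mathrm{ord}(\mathcal{U})\le 2n-1$. Taking a partition of unity subordinate to $\mathcal{U}$ and mapping into the nerve produces a continuous $f:M^n\to P$ to a polyhedron $P=N(\mathcal{U})$ with $\dim P\le 2n-1$ whose every fiber satisfies $\diam\!\left(f^{-1}(p),\ell^\infty\right)\le\varepsilon$ (two points with the same barycentric image share a common $U_i$, hence lie in one set of diameter $\le\varepsilon$). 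The goal is to build an approximate inverse $\psi:P\to M^n$ so that $g:=\psi\circ f$ is $\ell^\infty$-close to the identity, and then play off two incompatible values of $\deg(g)$.

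To construct $\psi$ I first record a compactness fact: since every fiber has diameter $\le\varepsilon$ and $\varepsilon<1/2$, fixing any $\eta$ with $\varepsilon<\eta<1/2$ there is $\delta>0$ with $\diam\!\left(f^{-1}(B_{2\delta}(p)),\ell^\infty\right)<\eta$ for all $p$ (otherwise a diagonal subsequence would produce two points at distance $\ge\eta$ in a single fiber). I then triangulate $P$ with mesh $<\delta$, choose for each vertex $v$ lying within $\delta$ of $f(M^n)$ a point $x_v\in M^n$ with $f(x_v)$ within $\delta$ of $v$, set $\psi(v)=x_v$ (and $\psi(v)$ arbitrary elsewhere), and extend $\psi$ affinely over each simplex. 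The affine combination is well defined because the vertices of any simplex meeting $f(M^n)$ map under $f$ into a common $2\delta$-ball, so their chosen preimages lie within $\eta<1/2$ of one another in every $\mathbb{R}^2/\mathbb{Z}^2$ coordinate and can be averaged by lifting each factor to $\mathbb{R}^2$. A direct estimate then yields $\ell^\infty(g(x),x)<\eta<1/2$ for all $x$: if $f(x)$ lies in a simplex with vertices $v_j$, then $x$ and all the $x_{v_j}$ lie in one set of $\ell^\infty$-diameter $<\eta$, and the barycentric average of the $x_{v_j}$ stays within $\eta$ of $x$.

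Finally I would extract the contradiction from degree theory on $\mathbb{T}^{2n}\cong M^n$. On one hand, $\ell^\infty(g(x),x)<1/2$ forces $g$ to be homotopic to the identity, via factorwise interpolation along the unique short geodesic in each $\mathbb{R}^2/\mathbb{Z}^2$ coordinate (unique because $1/2$ is the injectivity radius of $M$), so $\deg g=1$ and $g^\ast$ is the identity on $H^{2n}(M^n;\mathbb{Z})\cong\mathbb{Z}$. On the other hand $g=\psi\circ f$ factors through $P$, and $H^{2n}(P;\mathbb{Z})=0$ because $\dim P\le 2n-1$, so $g^\ast=\psi^\ast\circ f^\ast$ is the zero map on $H^{2n}$, giving $\deg g=0$. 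This contradiction forces $\widim_\varepsilon\left(M^n,\ell^\infty\right)\ge 2n$. The main obstacle is the construction and the distance bookkeeping for $\psi$, in particular arranging that the approximate inverse stays within the injectivity radius so that the homotopy to the identity exists; once $\deg g=1$ is secured, the cohomological vanishing on the low-dimensional polyhedron closes the argument.
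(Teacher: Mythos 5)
Your upper bound and the degree-theoretic endgame (homotopy to the identity inside the injectivity radius versus factoring through a polyhedron with vanishing $H^{2n}$) are sound, but the construction of the approximate inverse $\psi$ has a genuine gap, and it sits exactly where the constant $1/2$ in the lemma is at stake. You justify the affine extension of $\psi$ over a simplex by saying that the chosen preimages $x_{v_j}$ are pairwise within $\eta<1/2$ in every $\mathbb{R}^2/\mathbb{Z}^2$ coordinate and ``can be averaged by lifting each factor to $\mathbb{R}^2$''. This is false once $\eta\geq 1/3$: pairwise closeness below the injectivity radius does not produce a coherent lift. For example, the points $(0,0)$, $(0.4,0)$, $(0.8,0)$ in $M=\mathbb{R}^2/\mathbb{Z}^2$ are pairwise within $0.4<1/2$ in the flat metric, yet no choice of lifts realizes all three distances: lifting the first two as $(0,0)$ and $(0.4,0)$ forces the third to lift to $(-0.2,0)$ (to be at distance $0.2$ from the origin), which is then at distance $0.6$, not $0.4$, from $(0.4,0)$. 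Hence there is no canonical barycentric combination --- different ad hoc lifts give averages that differ modulo $\mathbb{Z}^2$ --- so your $\psi$ is not well defined, is not automatically consistent across shared faces, and the key estimate $\ell^\infty\left(\psi(f(x)),x\right)<\eta$ collapses with it. The lifting argument does work when the points are pairwise within $\eta<1/3$ (then the distance-realizing lift exists and is unique up to a common translation), so your proof is correct for $\varepsilon<1/3$; but since $\eta$ must exceed $\varepsilon$, the range $1/3\leq\varepsilon<1/2$ of the stated lemma is not covered, and unlike the cube there is no rescaling trick on a fixed torus to recover it.

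The paper avoids this issue entirely by a reduction rather than a direct torus argument: the quotient map gives a distance-nondecreasing continuous map from $\left([0,1/2]^{2n},\ \text{sup-distance}\right)$ into $(M^n,\ell^\infty)$, so $\widim_\varepsilon\left(M^n,\ell^\infty\right)\geq\widim_\varepsilon\left([0,1/2]^{2n},\ \text{sup-distance}\right)$, and the right-hand side equals $2n$ for $\varepsilon<1/2$ by Lindenstrauss--Weiss (their Lemma 3.2 on cubes); the trivial bound by topological dimension gives the reverse inequality. In the cube the analogue of your approximate-inverse construction is unproblematic because the cube is convex and sits in $\mathbb{R}^{2n}$ --- averaging needs no lifting --- and that is what yields the sharp threshold $1/2$ (the side length of the isometrically embedded cube) rather than a lifting-limited constant like $1/3$. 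To salvage your write-up you must either weaken the stated range to $\varepsilon<1/3$, or replace the barycentric step by an argument valid for configurations of diameter up to $1/2$, e.g.\ by passing through the embedded cube as the paper does.
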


\begin{proof}
There exists a distance-nondecreasing continuous map from $\left([0,1/2]^{2n}, \text{sup-distance}\right)$
to $\left(M^n,\ell^\infty\right)$.
Hence
\[ \widim_\varepsilon \left(M^n,\ell^\infty\right) \geq \widim_\varepsilon\left([0,1/2]^{2n}, \text{sup-distance}\right). \]
The right-hand side is equal to $2n$ for $\varepsilon< 1/2$ by \cite[Lemma 3.2]{Lindenstrauss--Weiss}.
\end{proof}

\subsection{Proof of Proposition \ref{proposition: counter-example of multidimensional Mane2}}
\label{Proof of Proposition counter-example of multidimensional Mane2}

Let $h:M\to M$ be a hyperbolic toral automorphism.
An important fact for us is that periodic points of $h$ are dense in $M$.
We define $h_n:M^n\to M^n$ by
$h_n(x_0,\dots, x_{n-1}) = (h(x_0),\dots,h(x_{n-1}))$.
Consider the two-sided infinite product $M^\mathbb{Z}$ and let $\sigma:M^\mathbb{Z}\to M^\mathbb{Z}$
be the shift: $\sigma(x)_n= x_{n+1}$.
Define $h_{\mathbb{Z}}:M^\mathbb{Z}\to M^\mathbb{Z}$ by
$h_\mathbb{Z}(x) = (h(x_n))_{n\in \mathbb{Z}}$.
The transformations $\sigma$ and $h_\mathbb{Z}$ generate an expansive $\mathbb{Z}^2$-action on $M^\mathbb{Z}$.
We will construct an appropriate subsystem $X$.
We define a distance $D$ on $M^\mathbb{Z}$ by
\[ D(x,y) = \sum_{n\in \mathbb{Z}} 2^{-|n|} d(x_n,y_n). \]

For $x=(x_n)_{n\in \mathbb{Z}}\in M^\mathbb{Z}$ and integers $a<b$ we denote
\[  x_a^b = (x_a, x_{a+1},\dots, x_b). \]
Let $A\subset M^n$ be a $h_n$-invariant closed subset.
We define a $\mathbb{Z}^2$-invariant closed subset $X(A)\subset M^\mathbb{Z}$
as the set of $x$ satisfying
\[ \exists l\in \mathbb{Z}: \,  \forall m\in \mathbb{Z}: \quad  x_{l+mn}^{l+(m+1)n-1} \in A. \]
In particular if $A= M$ then $X(A) = M^\mathbb{Z}$.

We inductively define positive integers $L_n$ and closed $h_{3^n L_0\cdots L_n}$-invariant sets
$A_n\subset M^{3^n L_0\cdots L_n}$ such that
\begin{equation} \label{eq: periodic points are dense}
    \text{periodic points of $h_{3^n L_0\cdots L_n}$ are dense in $A_n$.}
\end{equation}
First we set $L_0 = 1$ and $A_0=M$.
Suppose we have already defined $L_n$ and $A_n$.
It follows from (\ref{eq: periodic points are dense}) that
there exists a finite set $B_n = \{y^{(1)}, \dots, y^{(b_n)}\}$ in $A_n^3 \subset M^{3^{n+1}L_0\cdots L_n}$ such that
\begin{itemize}
   \item $B_n$ is $(1/n)$-dense in $A_n^3$ with respect to the distance $\ell^\infty$, namely, for every $x\in A_n^3$
   there exists $y\in B_n$ with $\ell^\infty(x,y) < 1/n$.
   \item $B_n$ is $h_{3^{n+1}L_0\cdots L_n}$-invariant. In particular every point $y^{(i)}$ is $h_{3^{n+1}L_0\cdots L_n}$-periodic.
\end{itemize}
We choose $L_{n+1}$ sufficiently larger than $b_n$. (Indeed $L_{n+1} > 2^{n+1}b_n$ is enough. But the detail of the choice is
not important.)
We define closed (but not necessarily invariant) set $C_n\subset (A_n^3)^{L_{n+1}}$ as the set of points
$x = (x_0, \dots, x_{L_{n+1}-1})$, $x_i\in A_n^3$, satisfying
\[ x_{L_{n+1}-i} = y^{(i)} \quad (\forall 1\leq i\leq b_n). \]
We define a closed invariant set $A_{n+1}\subset (A_n^3)^{L_{n+1}}$ by
\[ A_n = \bigcup_{m=0}^\infty h_{3^{n+1}L_0\cdots L_{n+1}}^m\left(C_n\right), \quad
    (\text{this becomes a finite union}). \]
Periodic points of $h_{3^{n+1}L_0\cdots L_{n+1}}$ are dense in $A_{n+1}$.
So we can continue the induction.

The closed $\mathbb{Z}^2$-invariant sets $X(A_n)$ form a decreasing sequence:
\[  M^\mathbb{Z} = X(A_0) \supset X(A_1) \supset X(A_2) \supset \dots. \]
We set $X = \bigcap_{n=0}^\infty X(A_n)$.
This is a closed $\mathbb{Z}^2$-invariant set of $M^\mathbb{Z}$.

\begin{claim}  \label{claim: minimality}
For any $x\in X(A_n)$ and any $y\in X(A_{n+1})$ the exists $p\in \mathbb{Z}$ satisfying
\[  D(x, \sigma^p y) < \frac{3}{n} + 2^{1-3^n L_0\cdots L_n}. \]
Since the right-hand side goes to zero as $n\to \infty$, this shows that $(X,\sigma)$ is minimal .
\end{claim}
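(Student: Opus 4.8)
The plan is to find, for each pair $x\in X(A_n)$ and $y\in X(A_{n+1})$, a single integer $p$ for which $\sigma^p y$ agrees with $x$ to within $1/n$ in every coordinate of the window $|j|\le \ell_n$, where $\ell_n := 3^n L_0\cdots L_n$ denotes the block length of $A_n$. Granting such a $p$, the asserted bound follows from a routine split of the sum defining $D$: on the window, $\sum_{|j|\le \ell_n}2^{-|j|}\,d(x_j,(\sigma^p y)_j)\le \tfrac1n\sum_{j\in\mathbb{Z}}2^{-|j|}=\tfrac3n$, while on the tail, using $\diam M=1/\sqrt2<1$ together with $\sum_{|j|>\ell_n}2^{-|j|}=2^{1-\ell_n}$, the contribution is at most $2^{1-\ell_n}$. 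Adding these two estimates gives exactly $\tfrac3n+2^{1-\ell_n}$. For the minimality conclusion, note that $X\subset X(A_n)\cap X(A_{n+1})$ for every $n$, so applying the bound to arbitrary $x,y\in X$ and letting $n\to\infty$ shows that the $\sigma$-orbit of $y$ is dense in $X$; hence $(X,\sigma)$ is minimal.

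To produce $p$ I would first read off from $x$ a word in $A_n^3$ lying over the window. Since $x\in X(A_n)$, there is an offset $l$ so that $x$ is tiled by $A_n$-blocks at positions $l+m\ell_n$. Choosing $m_0$ so that the block $[\,l+(m_0+1)\ell_n,\ l+(m_0+2)\ell_n-1\,]$ contains the coordinate $0$, the three consecutive blocks indexed $m_0,m_0+1,m_0+2$ occupy an interval $U$ with $U\supseteq[-\ell_n,\ell_n]$, and the restriction $w:=x|_U$ is an element of $A_n^3$. By the $(1/n)$-density of $B_n$ in $A_n^3$ (in the $\ell^\infty$ metric), there is some $y^{(i)}\in B_n$ with $\ell^\infty(w,y^{(i)})<1/n$.

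The remaining step, which I expect to be the crux, is to locate this particular $y^{(i)}$ as a contiguous sub-word of $y$ and align it with $U$. Since $y\in X(A_{n+1})$, it contains an $A_{n+1}$-block, which by construction has the form $h^m(z)$ for some $z\in C_n$ and $m\ge 0$; the last $b_n$ sub-blocks of $z$ (each a length-$3\ell_n$ word of $A_n^3$-type) are exactly $y^{(b_n)},\dots,y^{(1)}$. The obstacle is the twisting automorphism $h^m$, which a priori could move these sub-words out of $B_n$. This is precisely what the $h$-invariance of $B_n$ rules out: $h_{3\ell_n}$ permutes the finite set $B_n$, so $\{h^m(y^{(1)}),\dots,h^m(y^{(b_n)})\}=B_n$, and hence the last $b_n$ sub-blocks of $h^m(z)$ are again exactly the elements of $B_n$, with our chosen $y^{(i)}$ among them. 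Reading off the global index $q$ at which this occurrence of $y^{(i)}$ begins in $y$, I set $p:=q-(l+m_0\ell_n)$, so that $(\sigma^p y)_j=y^{(i)}_{\,j-(l+m_0\ell_n)}$ for $j\in U$; by the choice of $y^{(i)}$ this is within $1/n$ of $w_{\,j-(l+m_0\ell_n)}=x_j$ for all $j\in U\supseteq[-\ell_n,\ell_n]$, which is exactly the pointwise estimate required in the first step.
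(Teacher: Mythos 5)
Your proof is correct and follows essentially the same route as the paper's: normalize the $A_n$-tiling of $x$ so that three consecutive blocks cover the window $[-3^nL_0\cdots L_n,\,3^nL_0\cdots L_n]$, pick $y^{(i)}\in B_n$ within $1/n$ of that $A_n^3$-word, locate an occurrence of $y^{(i)}$ inside $y$, shift to align, and split the sum defining $D$ into the window part (at most $3/n$) and the tail (at most $2^{1-3^nL_0\cdots L_n}$). The only difference is that you spell out why every point of $X(A_{n+1})$ actually contains $y^{(i)}$ --- namely that the componentwise map $h_{3^{n+1}L_0\cdots L_n}$ permutes the finite invariant set $B_n$, so the twisting $h^m$ in the definition of $A_{n+1}$ cannot move the terminal sub-blocks out of $B_n$ --- a point the paper compresses into ``from the definition of $A_{n+1}$, any point $y\in X(A_{n+1})$ contains $y^{(i)}$ somewhere,'' and your treatment of it is correct.
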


\begin{proof}
Let $x=(x_m)_{m\in \mathbb{Z}}\in X(A_n)$ $(x_m\in M)$.
There exists $l\in \mathbb{Z}$ such that
\[ \forall m\in \mathbb{Z}: \> x_{l+m3^n L_0\cdots L_n}^{l+(m+1)3^n L_0\cdots L_n-1} \in A_{n}. \]
We can assume $-2 \cdot 3^n L_0\cdots L_n <  l \leq - 3^n L_0\cdots L_n$.
Since $B_n$ is $(1/n)$-dense in $A_n^3$, we can find $y^{(i)}\in B_n$ which is $(1/n)$-close to $x_l^{l+3^{n+1}L_0\cdots L_n-1}$
with respect to the sup-distance $\ell^\infty$.
From the definition of $A_{n+1}$, any point $y\in X\left(A_{n+1}\right)$ ``contains'' $y^{(i)}$ somewhere, namely
there exists $q\in \mathbb{Z}$ with $y_q^{q+3^{n+1} L_0\cdots L_n-1} = y^{(i)}$.
Then
\begin{equation*}
    \begin{split}
      D(x, \sigma^{q-l} y) &\leq \sum_{m=l}^{l+3^{n+1}L_0\cdots L_n-1} 2^{-|m|} \ell^\infty\left(x_l^{l+3^{n+1}L_0\cdots L_n-1}, y^{(i)}\right)
      + \sum_{m<l \text{ or } m\geq l+3^{n+1}L_0\cdots L_n} 2^{-|m|}  \\
      &<    \frac{3}{n}  + \sum_{|m|> 3^n L_0\cdots L_n} 2^{-|m|} = \frac{3}{n} + 2^{1-3^n L_0\cdots L_n}.
    \end{split}
\end{equation*}
\end{proof}

\begin{claim} \label{claim: positive mean dimension}
The mean dimension $\mdim(X, \sigma\circ h_\mathbb{Z})$ is positive.
\end{claim}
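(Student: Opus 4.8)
The plan is to bound $\mdim(X,\sigma\circ h_{\mathbb Z})$ from below by planting, inside each dynamical metric space $\left(X,D^{\sigma h_{\mathbb Z}}_{[-N,N]}\right)$, a distance-non-decreasing copy of a high-dimensional torus. Recall from the second description of mean dimension that
\[
\mdim\left(X,\sigma\circ h_{\mathbb Z}\right)=\lim_{\varepsilon\to0}\lim_{N\to\infty}\frac{\widim_\varepsilon\left(X,D^{\sigma h_{\mathbb Z}}_{[-N,N]}\right)}{2N+1},
\]
and that, exactly as in the proof of Lemma \ref{lemma: widim of M^n}, a distance-non-decreasing continuous map $g\colon(A,d_A)\to(B,d_B)$ forces $\widim_\varepsilon(B,d_B)\ge\widim_\varepsilon(A,d_A)$. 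So it suffices to embed $\left(M^{F_N},\ell^\infty\right)$ in a distance-non-decreasing way for a set $F_N$ of coordinates of size $|F_N|\gtrsim N$; Lemma \ref{lemma: widim of M^n} then contributes a factor $2$ per coordinate.

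First I would isolate the ``genuinely free'' coordinates of the construction: fix a point $x^\ast\in X$ and, for each $N$, let $F_N\subset[-N,N]\cap\mathbb Z$ be those coordinates of $x^\ast$ that may be prescribed arbitrarily in $M$, independently of one another, without leaving $X$ (all coordinates outside $F_N$ being kept equal to those of $x^\ast$). Granting this, define $\iota\colon M^{F_N}\to X$ by $\iota(\xi)_j=h^{-j}\xi_j$ for $j\in F_N$ and $\iota(\xi)_j=x^\ast_j$ otherwise. This lands in $X$ precisely because the coordinates in $F_N$ are free and $h^{-j}\xi_j$ still ranges over all of $M$. The point of the twist $h^{-j}$ is that $\bigl((\sigma h_{\mathbb Z})^{j}\iota(\xi)\bigr)_0=h^{j}\iota(\xi)_j=\xi_j$, so reading the zeroth coordinate after $j$ steps recovers $\xi_j$ with no distortion from $h$. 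Consequently, given $\xi,\eta\in M^{F_N}$, choosing $j_0\in F_N$ with $\ell^\infty(\xi,\eta)=d(\xi_{j_0},\eta_{j_0})$ (note $|j_0|\le N$, so $n=j_0$ is admissible) yields
\[
D^{\sigma h_{\mathbb Z}}_{[-N,N]}(\iota\xi,\iota\eta)\ \ge\ D\bigl((\sigma h_{\mathbb Z})^{j_0}\iota\xi,(\sigma h_{\mathbb Z})^{j_0}\iota\eta\bigr)\ \ge\ d(\xi_{j_0},\eta_{j_0})=\ell^\infty(\xi,\eta),
\]
so $\iota$ is distance-non-decreasing. Hence, by Lemma \ref{lemma: widim of M^n}, for $\varepsilon<1/2$ one gets $\widim_\varepsilon\left(X,D^{\sigma h_{\mathbb Z}}_{[-N,N]}\right)\ge\widim_\varepsilon\left(M^{F_N},\ell^\infty\right)=2|F_N|$.

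It then remains to show $|F_N|\ge\theta(2N+1)$ for a fixed $\theta>0$ and all large $N$, which is where the hierarchical structure enters: a coordinate is truly free exactly when it lies in a free sub-block at \emph{every} level $n$, and by construction the marker units at level $n$ occupy a fraction $b_n/L_{n+1}<2^{-(n+1)}$ of each level-$(n+1)$ block (using $L_{n+1}>2^{n+1}b_n$). I expect the density of free coordinates to be bounded below by $\prod_{n\ge0}\left(1-2^{-(n+1)}\right)>0$, giving $\mdim(X,\sigma h_{\mathbb Z})\ge 2\prod_{n\ge0}\left(1-2^{-(n+1)}\right)>0$. The main obstacle is precisely this counting step: one must pin down a base point $x^\ast$ together with its nested tiling by level-$n$ blocks with consistent phases, verify that the resulting free coordinates can genuinely be varied independently over $M$ while remaining inside $\bigcap_n X(A_n)$, and control the boundary effects of the window $[-N,N]$ against the growing block lengths $3^nL_0\cdots L_n$ so that the positive lower density survives the limit in $N$. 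The twist trick and the appeal to Lemma \ref{lemma: widim of M^n} are routine once this is in place; the bookkeeping of free coordinates across all scales is the delicate part.
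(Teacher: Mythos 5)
Your proposal is correct and is essentially the paper's own argument: the same twist $h^{-m}$ applied to the free coordinates, the same distance-non-decreasing embedding of a torus power followed by Lemma \ref{lemma: widim of M^n}, and the same density product $\prod_{n\geq 1}\left(1-b_{n-1}/L_n\right)>0$ coming from the smallness of the marker fraction. The bookkeeping you defer is exactly what the paper makes explicit: it defines the free positions inductively by $I_0=\{0\}$ and $I_{n+1}=\bigcup_{m=0}^{3L_{n+1}-3b_n-1}\left(m\,3^n L_0\cdots L_n+I_n\right)$, takes a base point $z$ whose level-$n$ blocks are aligned at phase $0$ for every $n$ (so varying the $I_n$-coordinates independently stays inside every $X(A_j)$), and evaluates the width-dimension growth only along the subsequence $N=3^n L_0\cdots L_n$ with one-sided windows $[0,N)$ --- legitimate because the limit defining mean dimension exists --- so the boundary-effect control over general windows $[-N,N]$ that you flag as the delicate point is in fact not needed.
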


\begin{proof}
For $N\geq 1$ we define a distance $D_N$ on $X$ by
\[ D_N(x,y) = \max_{0\leq m <N} D\left((\sigma\circ h_\mathbb{Z})^m x, (\sigma\circ h_\mathbb{Z})^m y\right). \]
The mean dimension $\mdim(X, \sigma\circ h_\mathbb{Z})$ is given by
\[ \mdim(X, \sigma\circ h_\mathbb{Z}) = \lim_{\varepsilon\to 0}
   \left(\lim_{N\to \infty} \frac{\widim_\varepsilon(X, D_N)}{N}\right). \]

We inductively define $I_n\subset \{0,1,2,\dots, 3^n L_0\cdots L_n-1\}$ by $I_0 = \{0\}$ and
\[ I_{n+1} = \bigcup_{m=0}^{3L_{n+1}-3b_n-1}  \left(m 3^n L_0\cdots L_n + I_n\right). \]
Roughly, $I_n$ is the positions of ``free variables'' of $A_n \subset M^{3^n L_0\cdots L_n}$.
We have $|I_{n+1}| = (3L_{n+1}-3b_n) |I_n|$. Hence for $n\geq 1$
\[ |I_n| = (3L_n-3b_{n-1})(3L_{n-1}-3b_{n-2}) \cdots (3L_1-3b_0). \]

Choose a point $z\in X$ satisfying $z_{m 3^n L_0\cdots L_n}^{(m+1)3^n L_0\cdots L_n -1} \in A_n$ for all $m\in \mathbb{Z}$ and $n\geq 0$.
We define a continuous map $f:M^{I_n}\to X$ by
\[ f(x)_m = \begin{cases}
               h^{-m}(x_m)  &(m\in I_n) \\
               z_m  &(m\not \in I_n).
               \end{cases}    \]
For $x,y\in M^{I_n}$
\[  \ell^\infty(x,y) \leq D_{3^n L_0\cdots L_n}\left(f(x), f(y)\right). \]
Then for $0<\varepsilon < 1/2$
\[ \widim_\varepsilon \left(X, D_{3^n L_0\cdots L_n}\right) \geq \widim_\varepsilon \left(M^{I_n}, \ell^\infty\right) = 2|I_n|
   \quad (\text{Lemma \ref{lemma: widim of M^n}}). \]
So
\[ \lim_{n\to \infty} \frac{\widim_\varepsilon \left(X, D_{3^n L_0\cdots L_n}\right)}{3^n L_0\cdots L_n}
    \geq \lim_{n\to \infty} \frac{2|I_n|}{3^n L_0\cdots L_n} = 2 \prod_{n=1}^\infty \left(1-\frac{b_{n-1}}{L_n}\right). \]
Since we chose $L_n$ sufficiently larger than $b_{n-1}$, the right-hand side is positive.
\end{proof}

\begin{proof}[Proof of Proposition \ref{proposition: counter-example of multidimensional Mane2}]
Set $T=\sigma\circ h_\mathbb{Z}$ and $f = \sigma$.
Then $(X,T)$ is positive mean dimensional (Claim \ref{claim: positive mean dimension}) and
has a jointly expansive and minimal (Claim \ref{claim: minimality}) automorphism $f$.
This proves the proposition.
\end{proof}

\begin{remark}   \label{remark: how to modify the construction}
Here are some remarks on the construction:
\begin{enumerate}
  \item The transformation $f=\sigma$ is also positive mean dimensional on $X$.
  \item We can slightly modify the above construction so that $T= \sigma\circ h_\mathbb{Z}$ also becomes minimal on $X$.
          The modified construction is roughly as follows:
          We choose $L_{n+1}$ sufficiently larger than $b_n^2$ and define $C_n  \subset \left(A_n^3\right)^{L_{n+1}}$
          as the set of points $x = (x_0,\dots, x_{L_{n+1}-i})$, $x_i\in A_n^3$, satisfying
          \[ x^{L_{n+1}-1 -(i-1)b_n}_{L_{n+1}-i b_n} = (\underbrace{y^{(i)}, \dots, y^{(i)}}_{b_n} ) \quad (\forall 1\leq i\leq b_n). \]
          We define $A_{n+1}$ and $X$ as before.
  \item  The action of $h_\mathbb{Z}$ on $X$ is zero mean dimensional.
            So the above $X$ does not provide the proof of Proposition
             \ref{proposition: second counter-example of multidimensional Mane2}.
             We will prove it by modifying the above construction.
             A basic idea is as follows: We consider $X\times X$ with the $\mathbb{Z}^2$-action defined by
             \begin{equation}  \label{eq: joining}
                 (m, n)\cdot (x,y)  = \left(\sigma^m h_{\mathbb{Z}}^n(x), \sigma^{m+n} h_\mathbb{Z}^n(y)\right).
             \end{equation}
             Then we can check that every directional mean dimension of this action is positive.
             But (\ref{eq: joining}) is not minimal (or, at least, we cannot prove its minimality).
             We need to modify the construction so that it becomes minimal.
             In other words the first and second factor of (\ref{eq: joining}) should be disjoint.
             This is the main task of \S \ref{subsection: proof of Proposition second counter-example of Mane2}.
\end{enumerate}
\end{remark}

\section{Directional mean dimension and the proof of Proposition
\ref{proposition: second counter-example of multidimensional Mane2}}
\label{section: directional mean dimension and multidimensional Mane2}

\subsection{Directional mean dimension}
\label{subsection: directional mean dimension}

Here we introduce the notion of \textbf{directional mean dimension}
by mimicking the definition of \textit{directional entropy}
\cite{Milnor, Boyle--Lind}.
We recommend readers to review the definitions in \S \ref{section: mean dimension}.
The concept of directional mean dimensional was suggested to us by Doug Lind.

Let $(X,d)$ be a compact metric space and $T:\mathbb{Z}^2\times X\to X$
a continuous action.
Let $L\subset \mathbb{R}^2$ be a line.
Let $r>1/\sqrt{2}$ and set
\[ B_r(L) = \{x\in \mathbb{R}^2|\, \exists y\in L: |x-y| < r\}. \]
We define the directional mean dimension $\mdim(X,T,L)$ by
\[ \mdim(X,T,L) = \lim_{\varepsilon \to 0}
  \left(\liminf_{N\to \infty} \frac{\widim_\varepsilon\left(X, d_{B_r(L)\cap (-N,N)^2}\right)}{\mathrm{Length}\left(L\cap (-N,N)^2\right)}\right). \]

\begin{remark}  \label{remark: directional mean dimension}
The following properties can be easily checked:

\begin{enumerate}
    \item The value of $\mdim(X,T,L)$ is independent of $r>1/\sqrt{2}$ and the choice of the distance $d$
             (compatible with the underling topology).
    \item  If $L$ and $L'$ are two parallel lines in $\mathbb{R}^2$ then $\mdim(X,T, L) = \mdim(X,T,L')$.
             So it is enough to consider lines passing through the origin.
    \item  If $L = \mathbb{R} u$ for some $u\in \mathbb{Z}^2$ then
             \[ \mdim(X, T, L) = |u| \, \mdim\left(X,T|_{\mathbb{Z} u}\right). \]
             Here $\mdim\left(X, T|_{\mathbb{Z} u}\right)$ is the mean dimension of the restriction of $T$ on the subgroup
             $\mathbb{Z} u\subset \mathbb{Z}^2$.
\end{enumerate}
\end{remark}

\subsection{Proof of Proposition \ref{proposition: second counter-example of multidimensional Mane2}}
\label{subsection: proof of Proposition second counter-example of Mane2}

Here we prove Proposition \ref{proposition: second counter-example of multidimensional Mane2}
by modifying the construction in \S \ref{Proof of Proposition counter-example of multidimensional Mane2}.
The argument is a bit more technical.
We recommend readers to check Remark \ref{remark: how to modify the construction} (3).

We continue to use the notations introduced in \S \ref{Proof of Proposition counter-example of multidimensional Mane2}.
We briefly recall them:
$M=\mathbb{R}^2/\mathbb{Z}^2$ with a hyperbolic toral automorphism $h$.
The two-sided infinite product $M^\mathbb{Z}$ admit the shift $\sigma$ and $h_\mathbb{Z}$ (the component-wise action of $h$),
which generate an expansive $\mathbb{Z}^2$-action.
For a closed and $h_n$-invariant subset $A\subset M^n$ we defined the $\mathbb{Z}^2$-invariant closed subset $X(A)\subset M^\mathbb{Z}$.
The torus $M$ has the standard flat distance $d$ and we defined the distance $D$ on $M^\mathbb{Z}$ by
$D(x,y) = \sum 2^{-|n|}d(x_n,y_n)$.

We will inductively define positive integers $L_n$ and closed $h_{3^n L_0\cdots L_n}$-invariant subsets
$A_n$ and $A'_n$ in $M^{3^n L_0\cdots L_n}$ such that periodic points of $h_{3^n L_0\cdots L_n}$
are dense both in $A_n$ and $A'_n$.
First we set $L_0=1$ and $A_0=A'_0=M$.
Suppose we have already defined $L_n$, $A_n$ and $A'_n$.
There exist $h_{3^{n+1}L_0\cdots L_n}$-invariant subsets $B_n = \{y^{(1)},\dots, y^{(b_n)}\}\subset A_n^3$ and
$B'_n = \{z^{(1)},\dots, z^{(b_n)}\}\subset (A'_n)^3$ such that
$B_n$ and $B'_n$ are $(1/n)$-dense in $A_n^3$ and $(A_n')^3$ respectively with respect to the distance $\ell^\infty$
on $M^{3^{n+1}L_0\cdots L_n}$.
We choose $a_n\geq b_n$ which is a period of all $y^{(i)}$ and $z^{(i)}$
(for simplicity of the notation we set $H = h_{3^{n+1}L_0\cdots L_n}$):
\[  H^{a_n} \left(y^{(i)}\right) = y^{(i)}, \quad
     H^{a_n}\left(z^{(i)}\right) = z^{(i)}. \]
We choose $L_{n+1}$ sufficiently larger than $a_n^2 b_n$.
We define closed subsets $C_n\subset (A_n^3)^{L_{n+1}}$ and $C'_n\subset ((A_n')^3)^{L_{n+1}}$ as follows:
A point $x=(x_0,\dots, x_{L_{n+1}-1})$, $x_i\in A_n^3$,
belongs to $C_n$ if for all $1\leq i\leq b_n$
\[ x_{L_{n+1}-i a_n^2}^{L_{n+1}-1-(i-1)a_n^2} = (\underbrace{y^{(i)}, \dots, y^{(i)}}_{a_n^2}). \]
A point $x=(x_0,\dots, x_{L_{n+1}-1})$, $x_i\in (A_n')^3$, belongs to $C'_n$ if for all $1\leq i\leq b_n$
\[  x_{L_{n+1}-i a_n^2}^{L_{n+1}-1-(i-1)a_n^2} =
   (\underbrace{z^{(i)},\dots, z^{(i)}}_{a_n}, \underbrace{H(z^{(i)}), \dots, H(z^{(i)})}_{a_n}, \dots,
   \underbrace{H^{a_n-1}(z^{(i)}), \dots, H^{a_n-1}(z^{(i)})}_{a_n}). \]
We define $A_{n+1}$ and $A_{n+1}'$ by
\[ A_{n+1} = \bigcup_{m=0}^\infty h_{3^{n+1}L_0\cdots L_{n+1}}^m\left(C_n\right), \quad
   A_{n+1}' = \bigcup_{m=0}^\infty h_{3^{n+1}L_0\cdots L_{n+1}}^m\left(C'_n\right). \]

$X(A_n)$ and $X(A_n')$, $n\geq 0$, form decreasing sequences of closed $\mathbb{Z}^2$-invariant subsets of $M^\mathbb{Z}$.
We set $X = \bigcap_{n=0}^\infty X(A_n)$ and $X' = \bigcap_{n=0}^\infty X(A'_n)$.
We define commuting homeomorphisms $T_1$ and $T_2$ on $M^{\mathbb{Z}}\times M^{\mathbb{Z}}$ by
\[ T_1(x,y) = \left(\sigma(x), \sigma(y)\right), \quad
   T_2(x,y) =\left (h_\mathbb{Z}(x), \sigma\circ h_\mathbb{Z}(y)\right). \]
We will prove that the expansive $\mathbb{Z}^2$-action $(T_1,T_2)$ on $X\times X'$
satisfies the statement of Proposition \ref{proposition: second counter-example of multidimensional Mane2},
namely it is minimal and its every directional mean dimension is positive.
We define a distance $D$ on $M^\mathbb{Z} \times M^\mathbb{Z}$ by
\[ D\left((x,y), (z,w)\right) = \max\left(D(x,z), D(y,w)\right) \quad
    (x,y,z,w \in M^\mathbb{Z}). \]

\begin{claim}
For any $(x,y)\in X(A_n)\times X(A'_n)$ and $(z,w)\in X(A_{n+1})\times X(A_{n+1}')$
there exist integers $p$ and $q$ satisfying
\[ D\left((x,y), T_1^p \circ T_2^q (z,w)\right) < \frac{3}{n} + 2^{1-3^n L_0\cdots L_n}. \]
The right-hand side goes to zero as $n\to \infty$.
Therefore the $\mathbb{Z}^2$-action $\left(X\times X', (T_1,T_2)\right)$ is minimal.
\end{claim}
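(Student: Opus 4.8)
The plan is to run the proof of Claim~\ref{claim: minimality} in both coordinates at once. Writing $P_n=3^nL_0\cdots L_n$ for the block length at level $n$, I first note that $T_1^p\circ T_2^q$ acts on $M^{\mathbb Z}\times M^{\mathbb Z}$ by $(x,y)\mapsto(\sigma^p h_{\mathbb Z}^q x,\ \sigma^{p+q}h_{\mathbb Z}^q y)$, so the two factors share the twist $h_{\mathbb Z}^q$ but the second carries an extra shift $\sigma^q$. Since the metric on the product is $D((x,y),(z,w))=\max(D(x,z),D(y,w))$, it suffices to produce a single pair $(p,q)$ for which both $D(x,\sigma^p h_{\mathbb Z}^q z)$ and $D(y,\sigma^{p+q}h_{\mathbb Z}^q w)$ are $<\tfrac3n+2^{1-P_n}$; the estimate for each factor is then identical to the one in Claim~\ref{claim: minimality} (the central length-$3P_n$ block contributes $<3/n$ through the weights $\sum_{m}2^{-|m|}=3$, and the tails contribute $\sum_{|m|>P_n}2^{-|m|}=2^{1-P_n}$).

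Next I would choose $(p,q)$. As in Claim~\ref{claim: minimality}, pick the central block of $x$ lying in $A_n^3$ and a word $y^{(i)}\in B_n$ that is $(1/n)$-close to it, and likewise a $z^{(i')}\in B'_n$ that is $(1/n)$-close to the central block of $y$; normalise the phases of $x$ and $y$ so that these blocks straddle the origin. Now $z\in X(A_{n+1})$ presents the word $y^{(i)}$ only in the $H$-twisted form $H^m(y^{(i)})$ dictated by the global $h_{P_{n+1}}$-translate defining the relevant block of $z$. Locating one such block fixes the translation $p$ and forces the residue of $q$ modulo $a_n$ (so that the shared twist $h_{\mathbb Z}^q$ untwists $H^m(y^{(i)})$ back to $y^{(i)}$); the $a_n^2$ redundant copies stored in $C_n$ provide enough room to do this while keeping the block near the origin. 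With the residue of $q$ now fixed, I turn to the second factor, which is exactly where $C'_n$ was designed: it stores the \emph{entire} $H$-orbit $z^{(i')},H(z^{(i')}),\dots,H^{a_n-1}(z^{(i')})$, each repeated $a_n$ times, so for the already-determined twist $h_{\mathbb Z}^q$ there is always a stored block that $h_{\mathbb Z}^q$ carries back to $z^{(i')}$. The remaining freedom in the magnitude of $q$ (by multiples of $a_n$), together with the many occurrences of the pattern throughout $w$ and the $a_n$-fold repetition that supplies a window of admissible positions, lets me place that block at the phase in the second factor demanded by the extra shift $\sigma^q$.

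The hard part is exactly this simultaneous alignment: a single $(p,q)$ must translate the first factor correctly, untwist the shared $h_{\mathbb Z}^q$ in \emph{both} factors, and, through the extra relative shift $\sigma^q$, land the second factor on an admissible occurrence of $z^{(i')}$. The construction is engineered precisely to defeat this coupling: the plain $a_n^2$-fold repetition in $C_n$ makes the first factor insensitive to the positional choice, while the full-orbit storage in $C'_n$ makes the second factor insensitive to the value of the twist, so the remaining work is the bookkeeping that matches positions and twist residues together, with the inequalities $a_n\ge b_n$ and $L_{n+1}>a_n^2 b_n$ guaranteeing enough room inside a single level-$(n+1)$ block. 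Once both central blocks are matched within $1/n$, the distance bound follows verbatim, and since $\tfrac3n+2^{1-P_n}\to0$ and $X\times X'=\bigcap_n X(A_n)\times X(A'_n)$, the $\mathbb Z^2$-action $(T_1,T_2)$ on $X\times X'$ is minimal.
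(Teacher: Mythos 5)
Your proposal is correct and follows essentially the same route as the paper's proof: the same markers $y^{(i)}\in B_n$, $z^{(j)}\in B'_n$ chosen by $(1/n)$-density, the same use of the $a_n^2$-fold repetition in $C_n$ to decouple the translation from the twist residue of $q$ modulo $a_n$, the same use of the full-orbit storage in $C'_n$ to absorb whatever twist the first factor forces, and the same weight estimate $\tfrac{3}{n}+2^{1-3^nL_0\cdots L_n}$ from the central block and the tails of $\sum 2^{-|m|}$. The only difference is expository --- you fix $p$ and the residue of $q$ from the first factor and then solve the second, whereas the paper first aligns the two markers at relative offset $l_2-l_1$ and then untwists both simultaneously --- and you are, if anything, more explicit than the paper about the positional/twist bookkeeping that both arguments leave to the reader.
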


\begin{proof}
Let $x=(x_m)_{m\in \mathbb{Z}}$, $y=(y_m)_{m\in \mathbb{Z}}$,
$z = (z_m)_{m\in \mathbb{Z}}$ and $w = (w_m)_{m\in \mathbb{Z}}$
with $x_m, y_m, z_m, w_m \in M$.
There exist integers $l_1$ and $l_2$ in $(-2\cdot 3^n L_0\cdots L_n, -3^n L_0\cdots L_n]$ such that
\[   \forall m\in \mathbb{Z}: \>
   x_{l_1+m3^nL_0\cdots L_n}^{l_1+(m+1)3^nL_0\cdots L_n-1}\in A_n, \quad
  y_{l_2+m3^nL_0\cdots L_n}^{l_2+(m+1)3^nL_0\cdots L_n-1}\in A_n'. \]
Since $B_n$ and $B'_n$ are $(1/n)$-dense in $A_n^3$ and $(A_n')^3$ respectively, we can find some points in $B_n$ and $B'_n$
(say, $y^{(i)}$ and $z^{(j)}$) which are $(1/n)$-close to $x_{l_1}^{l_1+3^{n+1}L_0\cdots L_n-1}$
and $y_{l_2}^{l_2+3^{n+1}L_0\cdots L_n-1}$ respectively.

By applying $T_1$ and $T_2$ to $(z,w)$ in an appropriate number of times, we can assume that there exist integers $s_1$ and $s_2$
such that
\[ z_0^{3^{n+1}L_0\cdots L_n a_n^2-1} = ( \underbrace{H^{s_1}(y^{(i)}), \dots, H^{s_1}(y^{(i)})}_{a_n^2}), \]
\begin{equation*}
   \begin{split}
    w_{l_2-l_1}^{l_2-l_1+3^{n+1}L_0\cdots L_n a_n^2-1} = (\underbrace{H^{s_2}(z^{(j)}),\dots, H^{s_2}(z^{(j)})}_{a_n},
    \underbrace{H^{s_2+1}(z^{(j)}), \dots, H^{s_2+1}(z^{(j)})}_{a_n}, \dots, \\
    \underbrace{H^{s_2+a_n-1}(z^{(j)}),\dots, H^{s_2+a_n-1}(z^{(j)})}_{a_n}).
   \end{split}
\end{equation*}
By applying $T_1$ and $T_2$ to $(z,w)$ in an appropriate number of times again , we can assume
\[ z_0^{3^{n+1}L_0\cdots L_n-1} = y^{(i)}, \quad w_{l_2-l_1}^{l_2-l_1+3^{n+1}L_0\cdots L_n-1} = z^{(j)}. \]
Finally, by applying $T_1^{-l_1}$ to $(z,w)$, we can assume
\[ z_{l_1}^{l_1+3^{n+1}L_0\cdots L_n-1} = y^{(i)}, \quad w_{l_2}^{l_2+3^{n+1}L_0\cdots L_n-1} = z^{(j)}. \]
Then
\[ D(x,z) <  \sum_{m=l_1}^{l_1+3^{n+1}L_0\cdots L_n-1} 2^{-|m|} \frac{1}{n} + \sum_{m<l_1 \text{ or } m\geq l_1+3^{n+1}L_0\cdots L_n} 2^{-|m|}, \]
\[  D(y, w) < \sum_{m=l_2}^{l_2+3^{n+1}L_0\cdots L_n-1} 2^{-|m|} \frac{1}{n} + \sum_{m<l_2 \text{ or } m\geq l_2+3^{n+1}L_0\cdots L_n} 2^{-|m|}. \]
Both are bounded by
\[ \frac{3}{n} + \sum_{|m|>3^nL_0\cdots L_n} 2^{-|m|} = \frac{3}{n} + 2^{1-3^n L_0\cdots L_n}. \]
\end{proof}

The proof of Proposition \ref{proposition: second counter-example of multidimensional Mane2} is completed by
the next claim.

\begin{claim}
For any line $L\subset \mathbb{R}^2$ the directional mean dimension
$\mdim\left(X\times X', (T_1,T_2), L\right)$ is positive.
\end{claim}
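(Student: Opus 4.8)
The plan is to bound the directional mean dimension from below by embedding high-dimensional tori adapted to the direction of $L$, reading the ``free coordinates'' of one of the two factors through the dynamics. The two factors carry the commuting actions $S^{(p,q)}=\sigma^p h_{\mathbb Z}^q$ on $X$ and $S'^{(p,q)}=\sigma^{p+q}h_{\mathbb Z}^q$ on $X'$ (the restrictions of $(T_1,T_2)$ to each coordinate). Fixing one coordinate at a base point $y^*\in X'$ (resp.\ $x^*\in X$) turns any distance-nondecreasing embedding into a single factor into one into $X\times X'$ for the metric $D$, since $D$ on the product is the maximum of the two coordinate distances. Thus it suffices to give, for each line $L$, a positive lower bound for the directional mean dimension of one factor. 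The mechanism is that the two factors have \emph{different} bad directions: I will show $\mdim(X,S,L)>0$ whenever $L$ is not vertical, while the vertical direction is a good direction for $(X',S')$. Since these two lines are distinct, every $L$ is covered.

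For non-vertical $L$, write its slope as $\nu=u_2/u_1$ for a direction vector $u=(u_1,u_2)$ with $u_1\neq 0$. Mimicking Claim \ref{claim: positive mean dimension}, I embed a torus cube into $X$ by setting, on the free coordinates $m\in I_n$ (those never pinned by the defining blocks, of positive density $\delta^*>0$, the analogue of $\prod_n(1-b_{n-1}/L_n)$), the value $\hat f(x)_m=h^{-\lfloor \nu m\rceil}x_m$, and $z_m$ elsewhere; as free coordinates are unconstrained, $\hat f$ lands in $X$. Reading the zeroth coordinate through $(p,q)$ gives $(S^{(p,q)}\hat f(x))_0=h^{\,q-\lfloor \nu p\rceil}x_p$. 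The key estimate is that for $(p,q)\in B_r(L)$ the exponent stays bounded, $|q-\nu p|=|u_1 q-u_2 p|/|u_1|<r|u|/|u_1|=:C_0$, so $h^{\,q-\lfloor\nu p\rceil}$ is bi-Lipschitz with constants at most $C:=\lambda^{C_0+1}$ (here $\lambda$ is the expansion rate of $h$). Hence $\hat f$ is distance-nondecreasing up to the factor $C^{-1}$ into $D^{S}_{B_r(L)\cap(-N,N)^2}$, and Lemma \ref{lemma: widim of M^n} gives $\widim_\varepsilon(X,D^S_{B_r(L)\cap(-N,N)^2})\geq 2|J_N|$ for $\varepsilon<1/(2C)$, where $J_N$ is the set of free coordinates read in the window. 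Counting $|J_N|\geq \delta^{*}c_L N$ against $\mathrm{Length}(L\cap(-N,N)^2)\sim c'_L N$ yields $\mdim(X,S,L)>0$.

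For the vertical line $L=\{0\}\times\mathbb R$ the first factor degenerates (only coordinate $0$ is read), so I use $X'$. Now $(S'^{(0,q)}y)_0=h^q y_q$, so the pre-twist $\hat g(w)_m=h^{-m}w_m$ on the free coordinates of $X'$ produces an \emph{exact} cancellation $(S'^{(0,q)}\hat g(w))_0=w_q$. Thus $\hat g$ is genuinely distance-nondecreasing, reads a positive density of free coordinates along the vertical window with no twist, and the same counting (now with $C=1$) gives $\mdim(X',S',L)>0$. Together with the previous paragraph this covers every line, and by the factor-embedding reduction $\mdim(X\times X',(T_1,T_2),L)>0$ in all cases. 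Equivalently, under the shear $(p,q)\mapsto(p+q,q)$ the action $S'$ becomes $\sigma^{p}h_{\mathbb Z}^{q}$, so the argument of the second paragraph applies to $S'$ in every direction except $(1,-1)$, which is not vertical; this is the structural reason the two bad directions $(0,1)$ and $(1,-1)$ do not coincide.

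The main obstacle is bookkeeping rather than a new idea. I must check that the coordinates used in $\hat f,\hat g$ are unconstrained \emph{at every level} (so the pre-twisted points really lie in $X,X'$) and that they occur with positive density \emph{at every scale} $N$, not only at the block scales $3^nL_0\cdots L_n$. The latter is essential because directional mean dimension involves $\liminf_{N\to\infty}$ while the lengths $L_{n+1}$ grow without bound; the nested self-similar structure (each $A_{n+1}$, $A'_{n+1}$ contains many translated copies of the previous level) is exactly what guarantees density $\delta^*$ of ``free-at-all-levels'' coordinates in every window $(-N,N)$, so one may take $|J_N|\asymp N$ for all large $N$ and control the $\liminf$. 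The remaining verifications---the bounded-twist inequality and the tube length counts $c_L,c'_L$ for $B_r(L)\cap(-N,N)^2$---are routine once $L$ and $r>1/\sqrt2$ are fixed, and by Remark \ref{remark: directional mean dimension}(1) the value is independent of $r$ and of the compatible metric.
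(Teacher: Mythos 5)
Your proposal is correct and follows essentially the same route as the paper: reduce to one factor by freezing the other at a base point, embed torus cubes $M^{J_N}$ along the free coordinates with the twist $h^{-\lfloor \alpha m\rfloor}$ (factor $X$, non-vertical lines) or $h^{-m}$ (factor $X'$, vertical line), and invoke Lemma \ref{lemma: widim of M^n}; your bounded-twist bi-Lipschitz estimate over the whole tube is a cosmetic variant of the paper's exact cancellation at the lattice points $(m,\lfloor\alpha m\rfloor)$. The one point you flag but do not prove---positive density of free-at-all-levels coordinates in \emph{every} window $[0,t)$, not just at block scales---is precisely the paper's Subclaim \ref{subclaim: large part of [0,N) is contained in I_n}, which is settled by the same nested-structure argument you indicate (a short three-case computation giving $|[0,t)\cap I|>t/4$).
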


\begin{proof}
As we remarked in Remark \ref{remark: directional mean dimension} (2),
we can assume that $L$ passes through the origin.
For a subset $\Omega\subset \mathbb{R}^2$ we define a distance $D_\Omega$ on $X\times X'$ by
\begin{equation*}
   \begin{split}
    D_\Omega\left((x,y), (z,w)\right) & = \sup_{(m,n)\in \Omega\cap \mathbb{Z}^2} D\left(T_1^m T_2^n(x,y), T_1^m T_2^n(z,w)\right)  \\
    &= \sup_{(m,n)\in \Omega\cap \mathbb{Z}^2}
       \max\left(D(\sigma^m h_\mathbb{Z}^n(x),\sigma^m h_\mathbb{Z}^n(z)),
               D(\sigma^{m+n}h_\mathbb{Z}^n(y), \sigma^{m+n} h_\mathbb{Z}^n(w)\right).
   \end{split}
\end{equation*}
The directional mean dimension $\mdim\left(X\times X', (T_1,T_2), L\right)$ is given by
\[ \lim_{\varepsilon \to 0}\left(\liminf_{N\to \infty}
    \frac{\widim_\varepsilon \left(X\times X', D_{B_1(L) \cap (-N,N)^2}\right)}{\mathrm{Length}(L\cap (-N,N)^2)}\right). \]
This is proportional to
\begin{equation} \label{eq: modified directional mean dimension}
    \lim_{\varepsilon \to 0}\left(\liminf_{N\to \infty}
    \frac{\widim_\varepsilon \left(X\times X', D_{B_1(L) \cap (-N,N)^2}\right)}{N}\right).
\end{equation}
We will prove that (\ref{eq: modified directional mean dimension}) is positive for any $L$.

We inductively define $I_n\subset \{0,1,2,\dots , 3^n L_0\cdots L_n-1\}$ by $I_0=\{0\}$ and
\[ I_{n+1} = \bigcup_{m=0}^{3L_{n+1}-3a_n^2 b_n-1}\left(m 3^n L_0\cdots L_n + I_n\right). \]
$I_n$ is the positions of the free variables of $A_n$.
It follows that $|I_{n+1}| = (3L_{n+1}-3a_n^2 b_n) |I_n|$ and hence
\[ \frac{|I_n|}{3^n L_0\cdots L_n} =
  \left(1-\frac{a_{n-1}^2 b_{n-1}}{L_n}\right)\left(1-\frac{a_{n-2}^2b_{n-2}}{L_{n-1}}\right)\dots
    \left(1-\frac{a_0^2b_0}{L_1}\right). \]
Since we chose $L_{n+1}$ sufficiently larger than $a_n^2 b_n$, we can assume that for all $n\geq 0$
\begin{equation}  \label{eq: I_n is large}
   \frac{|I_n|}{3^n L_0\cdots L_n} > \frac{1}{2}.
\end{equation}
$\{I_n\}_{n=0}^\infty$ forms an increasing sequence. We define $I\subset \mathbb{Z}$ as the union of all $I_n$.

\begin{subclaim} \label{subclaim: large part of [0,N) is contained in I_n}
For any $t\geq 1$ we have $|[0,t)\cap I| > t/4$.
\end{subclaim}

\begin{proof}
Let $n\geq 0$ be the integer satisfying $3^n L_0\cdots L_n  \leq t < 3^{n+1}L_0\cdots L_{n+1}$.

\noindent
\textbf{Case 1:} $t < 2\cdot 3^n L_0\cdots L_n$.
Then by (\ref{eq: I_n is large})
\[ \frac{|[0,t)\cap I|}{t} \geq \frac{|I_n|}{t} > \frac{|I_n|}{ 2\cdot 3^n L_0\cdots L_n}
   > \frac{1}{4}. \]

\noindent
\textbf{Case 2:} $2 \cdot 3^n L_0\cdots L_n \leq t \leq (3L_{n+1}-3a_n^2 b_n) 3^{n}L_0\cdots L_{n}$.
Take the integer $m$ satisfying
$m 3^n L_0\cdots L_n \leq t < (m+1)3^n L_0\cdots L_n$. 
Then
\[  m \geq \frac{t}{3^n L_0\cdots L_n} -1  \geq \frac{t}{2\cdot 3^n L_0\cdots L_n}.  \]
\[ \frac{|[0,t)\cap I|}{t} \geq \frac{m|I_n|}{t} \geq  \frac{|I_n|}{ 2\cdot 3^n L_0\cdots L_n}
   > \frac{1}{4}. \]

\noindent
\textbf{Case 3:} $(3L_{n+1}-3a_n^2 b_n) 3^{n}L_0\cdots L_{n} < t < 3^{n+1}L_0\cdots L_{n+1}$. Then
\[  \frac{|[0,t)\cap I|}{t} = \frac{|I_{n+1}|}{t} > \frac{|I_{n+1}|}{3^{n+1}L_0\cdots L_{n+1}} > \frac{1}{2}. \]
\end{proof}

Choose points $z\in X$ and $w\in X'$ so that
$z_{m 3^n L_0\cdots L_n}^{(m+1) 3^n L_0\cdots L_n-1} \in A_n$ and
$w_{m 3^n L_0\cdots L_n}^{(m+1) 3^n L_0\cdots L_n-1} \in A_n'$ for all
$m\in \mathbb{Z}$ and $n\geq 0$.

\noindent
\textbf{Case 1.} Suppose $L = \{(t, \alpha t)|\, t\in \mathbb{R}\}$ with some $\alpha$.
We assume $\alpha\geq 0$. (The case $\alpha<0$ is the same.)
We set $\beta = \max(1, \alpha)$.
(We recommend readers to assume that $\alpha \geq 1$ and hence $\beta = \alpha$.
This is a more important case.)
Let $N \geq \beta$ be an integer.
Notice that $\left(m, \lfloor \alpha m\rfloor \right) \in B_1(L) \cap (-N,N)^2$ for $m \in  [0, N/\beta)$.
By Subclaim \ref{subclaim: large part of [0,N) is contained in I_n}, $|I\cap [0,N/\beta)| > N/(4\beta)$.
We define a continuous map $f:M^{I\cap [0,N/\beta)} \to X\times X'$ by
\[  f(x)_m = \begin{cases}
                   (h^{-\lfloor \alpha m\rfloor}(x_m), w_m)  & (m\in I\cap [0,N/\beta)) \\
                   (z_m, w_m)                                       & (m\not \in I\cap [0,N/\beta)).
                 \end{cases}
\]
Then for any $x,y\in M^{I \cap [0,N/\beta)}$
\[  \ell^\infty(x,y) \leq D_{B_1(L) \cap (-N,N)^2} \left(f(x), f(y)\right).  \]
It follows from Lemma \ref{lemma: widim of M^n} that for $0<\varepsilon < 1/2$
\[ \widim_\varepsilon \left(X\times X', D_{B_1(L) \cap (-N,N)^2} \right)
    \geq \widim_\varepsilon \left(M^{I\cap [0,N/\beta)}, \ell^\infty\right) = 2 |I\cap [0,N/\beta)| > \frac{N}{2\beta}. \]
This shows that (\ref{eq: modified directional mean dimension}) is larger than or equal to $1/(2\beta)$.

\noindent
\textbf{Case 2.} Suppose $L = \{(0, t)|\, t\in \mathbb{R}\}$.
This case is essentially the same with Claim \ref{claim: positive mean dimension} because of the form
\[ T_2^m(*, x) = \left(*, \sigma^m h_\mathbb{Z}^m (x)\right). \]
But we provide the proof for the completeness. Let $N$ be a natural number.
We define a continuous map $f:M^{I\cap [0,N)} \to X\times X'$ by
\[  f(x)_m = \begin{cases}
                   (z_m, h^{-m}(x_m))  & (m\in I\cap [0,N)) \\
                   (z_m, w_m)           & (m\not \in I\cap [0,N)).
                 \end{cases}
\]
Then for any $x,y\in M^{I \cap [0,N)}$
\[  \ell^\infty(x,y) \leq D_{B_1(L) \cap (-N,N)^2} \left(f(x), f(y)\right).  \]
From Lemma \ref{lemma: widim of M^n}, for $0<\varepsilon < 1/2$
\[ \widim_\varepsilon \left(X\times X', D_{B_1(L) \cap (-N,N)^2} \right)
    \geq \widim_\varepsilon \left(M^{I\cap [0,N)}, \ell^\infty\right) = 2 |I\cap [0,N)| > \frac{N}{2}. \]
Hence (\ref{eq: modified directional mean dimension}) is larger than or equal to $1/2$.
\end{proof}

\vspace{0.5cm}

\address{Tom Meyerovitch \endgraf
Department of Mathematics, Ben-Gurion University of the Negev, Israel}

\textit{E-mail address}: \texttt{mtom@math.bgu.ac.il}

\vspace{0.5cm}

\address{ Masaki Tsukamoto \endgraf
Department of Mathematics, Kyoto University, Kyoto 606-8502, Japan}

\textit{E-mail address}: \texttt{tukamoto@math.kyoto-u.ac.jp}

\end{document}